\newtheorem{corollary}{Corollary}
\newtheorem{proposition}{Proposition}
\newtheorem{lemma}{Lemma}
\newtheorem{theorem}{Theorem}
\newcommand{\p}{\mathbb{P}}
\newcommand{\e}{\mathbb{E}}
\newcommand{\ud}{\mathrm{d}}
\newcommand{\R}{\mathbb{R}}
\newcommand{\N}{\mathbb{N}}
\newcommand{\Ind}[1]{\mathbf{1}_{\{#1\}}}
\newcommand{\Expo}[1]{\exp\left\{#1\right\}}
\newcommand{\Prob}[1]{\mathbb{P}\left(#1\right)}
\newcommand{\Exp}[1]{\mathbb{E}\left[#1\right]}
\newcommand{\Expx}[2]{\mathbb{E}_{#1}\left[ #2 \right]}
\newcommand{\Expconx}[3]{\mathbb{E}_{#1}\left[#2\big| #3 \right]}
\begin{document}

\title{Branching processes in a L\'evy random environment.}
\author{ S. Palau\footnote{ {\sc Centro de Investigaci\'on en Matem\'aticas A.C. Calle Jalisco s/n. 36240 Guanajuato, M\'exico.} E-mail: sandra.palau@cimat.mx. Corresponding author}\,\, and J.C. Pardo\footnote{ {\sc Centro de Investigaci\'on en Matem\'aticas A.C. Calle Jalisco s/n. 36240 Guanajuato, M\'exico.} E-mail: jcpardo@cimat.mx}\\}
\maketitle
\vspace{0.2in}

\begin{abstract} 
In this paper, we introduce branching processes in a L\'evy random environment. In order to define this class of processes, we study a  particular class of non-negative stochastic differential equations driven by   Brownian motions and  Poisson random measures which are mutually independent. The existence and uniqueness of strong solutions are established under some general conditions that allows us to consider  the case when the strong solution explodes at a finite time.  We use the latter result to construct  continuous state branching processes with immigration and competition in a L\'evy random environment as a strong solution of a stochastic differential equation. We also study some properties of such processes  that extends recent results obtained by Bansaye et al. in (Electron. J. Probab. 18, no. 106, 1-31, (2013)), Palau and Pardo  in (arXiv:1506.09197 (2015)) and Evans et al. in (J. Math. Biol., 71, 325-359, (2015)).

\bigskip

\noindent {\sc Key words and phrases}:  Continuous state branching processes in random environment, stochastic differential equations, strong solution, immigration, competition.

\bigskip

\noindent MSC 2000 subject classifications: 60G17, 60G51, 60J80.
\end{abstract}

\vspace{0.5cm}

\section{Introduction}

In many biological systems, when the population size is  large enough, many birth and death events occur. Therefore, the dynamics of the population become difficult to describe. Under this scenario,  continuous state models are good approximations of these systems and sometimes they can be simpler and computationally more tractable. Moreover, the qualitative behaviour of the approximate models may be easier to understand. 

The simplest branching  model in continuous time and space  is perhaps the so called continuous state branching process (or CB-process for short). This model arises as the limit of Galton-Watson processes; where individuals behave independently one from each other and each individual gives birth to a random number of offspring, with the same offspring distribution (see for instance Grimvall \cite{gri}).  More precisely, a CB-process is a $[0,\infty]$-valued strong Markov process $Y=(Y_t,t\geq 0)$ with c\'adl\'ag paths such that satisfies the branching property: for all $\theta\geq 0$ and $x,y\geq 0$,
$$\Expx{x+y}{e^{-\theta Y_t}}=\Expx{x}{e^{-\theta Y_t}}\Expx{y}{e^{-\theta Y_t}}.$$
Moreover, its law  is completely characterized by the latter identity, i.e.
\begin{equation*}
  \Expx{x}{e^{-\lambda Y_t}}=e^{-xu_t(\lambda)},\qquad  t\geq 0,
\end{equation*}
where $u$ is a differentiable  function in $t$ satisfying
\begin{equation}
\frac{\partial u_{t}(\lambda)}{\partial t}=-\psi(u_{t}(\lambda)), \qquad u_{0}(\lambda)=\lambda,
\label{DEut}
\end{equation}
and $\psi$, the branching mechanism of $Y$, satisfies 
\begin{equation*}
\psi(\lambda)=-q-a\lambda+\gamma^2
\lambda^2+\int_{(0,\infty)}\big(e^{-\lambda x}-1+\lambda x{\mathbf 1}_{\{x<1\}}\big)\mu(\ud x),\qquad \lambda \geq 0,
\end{equation*}
where $a\in \mathbb{R}$, $q,\gamma\geq 0$ and $\mu$ is a
 measure concentrated on $(0,\infty)$ such that
$\int_{(0,\infty)}\big(1\land x^2\big)\mu(\ud x)$ is finite. A process in this class can also be defined as the unique non-negative strong solution of the following stochastic differential equation (SDE for short) 
\begin{equation*}
\begin{split}
Y_t=&Y_0+a\int_0^t Y_s \ud s+\int_0^t \sqrt{2\gamma^2 Y_s}\ud B_s \\
&\,\,\,+\int_0^t\int_{(0,1)}\int_0^{Y_{s-}}z\widetilde{N}(\ud s,\ud z,\ud u)+\int_0^t\int_{[1,\infty]}\int_0^{Y_{s-}}zN(\ud s,\ud z,\ud u),
\end{split}
\end{equation*}
where $B=(B_t,t\geq 0)$ is a standard Brownian motion, $N(\ud s,\ud z,\ud u)$ is a Poisson random measure independent of $B$, with intensity $\ud s\Lambda(\ud z)\ud u$ where 
$\Lambda(\ud z)=\mu(\ud z)\mathbf{1}_{(0,\infty)}(z)+q\delta_\infty(\ud z)$ and $\widetilde{N}$ is the compensated measure of $N$, see for instance \cite{FuLi}. 

Recently there has been some interest in extending this model,  in the sense that one would like to  include immigration, competition or dependence on the environment. This interest comes from the fact that these new models arise as  limits of discrete population models where there are interactions between individuals or where the offspring distribution depends on the environment (see for instance Lambert \cite{Lambert1}, Kawasu and Watanabe \cite{KW}, Bansaye and Simatos \cite{basim}). 

Recall that a CB-process with immigration (or CBI-process) is a strong Markov process taking values in $[0,\infty]$, where 0 is no longer an absorbing state. It is characterized by a branching mechanism $\psi$
 and an immigration mechanism, $$\phi(u)=\mathtt{d}u+\int_0^{\infty} (1-e^{-u t})\nu(\ud t),\qquad u\geq 0,$$
  where $\mathtt{d}\geq 0$ and $\nu$ is a measure supported in $(0,\infty)$ satisfying $ \int_0^{\infty}(1\wedge x)\nu(\ud x)<\infty.$
It is well-known that if $(Y_t, t\ge 0)$ is a process in this class, then its semi-group is characterized by
$$\Expx{x}{e^{-\lambda Y_t}}=\Expo{-xu_t(\lambda)-\int_0^t\phi( {u_s(\lambda)})\ud s}, \qquad \mbox{for} \quad \lambda\geq 0,$$
where $u_t$ solves (\ref{DEut}).

According to Fu and Li \cite{FuLi}, under the condition that $\int_{(0,\infty)}( x\land x^2)\mu({\rm d} x)$ is finite,  a CBI-process can be defined as the unique non-negative strong solution of the stochastic differential equation
\begin{align*}
Y_t=Y_0+&\int_0^t(\mathtt{d}+a Y_s) \ud s+\int_0^t \sqrt{2\gamma^2 Y_s}\ud B_s\\
&\hspace{2cm}+\int_0^t\int_{(0,\infty)}\int_0^{Y_{s-}}z\widetilde{N}(\ud s,\ud z,\ud  u)+\int_0^t\int_{(0,\infty)}z M^{(im)}(\ud s,\ud z),
\end{align*}
where  $M^{(im)}(\ud s,\ud z)$ is a Poisson random measures with intensity $\ud s\nu(\ud z)$ independent of  $B$ and  $N$.

CB processes  with competition  were first studied  by Lambert \cite{Lambert1}, under the name of logistic branching processes,   and more recently studied by Ma \cite{Ma} and Beresticky et al. \cite{BFF}. Under the assumptions that  \[
q=0\qquad  \textrm{and} \qquad \int_{(0,\infty)}\big(x\land x^2\big)\mu(\ud x)<\infty,
\]
 the CB-process with competition is defined as the unique strong solution of the following  SDE 
\begin{align*}
Y_t=&Y_0+a\int_0^t Y_s \ud s-\int_0^t \beta(Y_s)\ud s+\int_0^t \sqrt{2\gamma^2 Y_s}\ud B_s +\int_0^t\int_{(0,\infty)}\int_0^{Y_{s-}}z\widetilde{N}(\ud s,\ud z,\ud u),
\end{align*}
where $\beta$ is a continuous non-decreasing function on $[0,\infty)$ with $\beta(0)=0$, which is  called the competition mechanism. The interpretation of the function $\beta$ is the following: in a given population of size $z$, an additional individual would be killed a rate $\beta(z)$.

Branching processes in random environment (BPREs) were first introduced and studied in Smith and Wilkinson \cite{smith} and have attracted considerable interest in the last
decade (e.g. \cite{afa12,bo10,liu,vatuti} and the reference therein). BPREs are interesting since they are more realistic models compared with classical branching processes and, from the mathematical point of view, they have new properties such as phase transitions in the subcritical regime. 
Scaling limits in the finite variance case were conjectured by Keiding \cite{kei} who introduced Feller diffusions in random environment. This conjecture was proved by Kurtz \cite{Kurtz} and by Bansaye and Simatos \cite{basim} in  more general cases. 

There are new studies about the continuous state space setting, in all of them, the CB-process in random environment is defined as a strong solution of a particular  {SDE}. B\"oinghoff and Hutzenthaler \cite{CbMh} studied the case when the process possesses continuous paths. This process is the strong solution of the SDE
\begin{equation}\label{sdeenvironment}
 Z_t=Z_0+a\int_0^t Z_s ds+\int_0^t \sqrt{2\gamma^2 Z_s}dB_s +\int_0^t  Z_{s-}dS_s,
 \end{equation}
where the process $S=(S_t,t\ge 0)$ is a Brownian motion with drift which is independent of $B$. Bansaye and Tran \cite{BT} studied a cell dividing model which are infected by parasites. Informally, the quantity of parasites in a cell evolves as a Feller diffusion. The cells divide in continuous time at  rate $r(x)$, which may depend on the quantity of parasites $x$ that they contain. When a cell divides, a random fraction $\theta$ of parasites goes in the first daughter cell and the rest in the second one. In each division, they only keep one cell and consider the quantity of parasites inside. Assuming that the rate $r$ is constant  and $\theta$ is a r.v. in $(0,1)$ with distribution $F$, the model follows a Feller diffusion with multiplicative jumps of independent sizes distributed as $F$ and which occurs at rate $r$. In particular, the model can be described as in (\ref{sdeenvironment}) with $S$ satisfying
\begin{align*}
S_t=-r\int_0^t\int_{(0,1)}(1-\theta)M(\ud s,\ud \theta)
\end{align*}
where  $M$ is a Poisson random measure with intensity $\ud sF(\ud \theta).$ Inspired in this model, Bansaye et al. \cite{Bapa} studied more general CB-processes in  random environment which  are driven by  L\'evy processes whose paths are of bounded variation and under the assumption that $\int_{(1,\infty)}x\mu(\ud x)<\infty$.  They were called   CB-processes with catastrophes motivated by the fact that the presence of a  negative jump in the random environment represents that a proportion of a population, following  the dynamics of the CB-process,  is killed. The process is  {defined} as the unique non negative strong solution of the  {following} SDE
\begin{align*}
Z_t=&Z_0+a\int_0^tZ_s \ud s+\int_0^t \sqrt{2\gamma^2 Z_s}\ud B_s+\int_0^t\int_{(0,\infty)}\int_0^{Z_{s-}}z\widetilde{N}(\ud s,\ud z,\ud u) +\int_0^t  Z_{s-}\ud S_s,
\end{align*}
where
\begin{align*}
S_t=\int_0^t\int_{(0,\infty)}(m-1)M(\ud s,\ud m),
\end{align*}
$M$ is a Poisson random measure independent of $N$ and $B$, with intensity $\ud s\nu(\ud m)$, where $\nu$ is a measure concentrated on $(0,\infty)$ such that
$$0<\int_{(0,\infty)}(1\wedge |m-1|)\nu(\ud m)<\infty.$$
Palau and Pardo \cite{PP}  {considered} a general CB-process with immigration in a Brownian random environment. 
In other words, the authors in \cite{PP} consider the following SDE,
\begin{align*}
Z_t=&Z_0 +\int^t_0\left(\mathtt{d} +aZ_s\right)\ud s +\int_0^t \sqrt{2\gamma^2 Y_s}\ud B_s+\int_0^t  Z_s\ud S_s\nonumber\\
&\hspace{2cm}+\int_0^t\int_{(0,\infty)}\int_0^{Z_{s-}}z\widetilde{N}(\ud s,\ud z,\ud u)+\int_0^t\int_{(0,\infty)}zM_1(\ud s,\ud z),
\end{align*}
where  $S$ is a Brownian motion with drift, $\mathtt{d}\geq 0$ and  $M_1(\ud s,\ud z)$ is a Poisson random measure with intensity $\ud s\nu(\ud z)$ satisfying
\[
\int_{(0,\infty)}(1\wedge x)\nu(dx)<\infty.
\]
 Moreover, the processes $S, B,N$ and $M_1$ are independent of each other.
In all these manuscripts, the authors proved the existence of such process and obtained the speed of extinction. As in the case of BPREs, there is a phase transition in the subcritical regime.

Evans et al. \cite{EHS} consider a population living in a spatially heterogeneous environment with $n$ different patches. These patches may represent distinct habitats, patches of the same habitat type or combinations thereof. The population in the $i$-th patch at time $t\geq 0$ is given by
$$X_t^{(i)}=X_0^{(i)}+\int_0^t X_s^{(i)}(\mu_i-k_iX_s^{(i)})\ud s+\int_0^t X_s^{(i)}\ud E_s^{(i)},$$
where $\mu_i$ is the rate of growth in the patch $i$, $k_i$ represents the  competition in the patch $i$, and $E_t^{(i)}=\sum_j \gamma_{ij}B_t^{(j)}$, with  $(B_t^{(1)},\cdots,B_t^{(n)})$  a standard Brownian motion on $\R^n$. They assume that the fraction of population in  {patch} $i$ is equal to $\alpha_i$ all the time. Therefore if $\alpha_1,\cdots,\alpha_n\geq 0$ are such that $\sum_{i\leq n}\alpha_i=1$, we have $X_t^{(i)}=\alpha_i X_t$ where $X_t=\sum_{i\leq n}X_t^{(i)}.$ In this case, the process $X$ satisfies the SDE
$$X_t=X_0+\int_0^t X_s\sum_{i\leq n}\alpha_i(\mu_i-k_i\alpha_iX_s)\ud s+\int_0^t X_s\sum_{i\leq n}\alpha_i\ud E_s^{(i)}.$$

In this paper, one of our aims is to construct a continuous state branching processes with immigration in a L\'evy random environment as a strong solution of a  {SDE}. In order to do so,  we study a particular class of non-negative SDE's driven by Brownian motions and Poisson random measures which are mutually independent. The existence and uniqueness of strong solutions are established under some general conditions that allows us to consider the case when the strong solution explodes at a finite time. This result is of particular interest on its own.

The remainder of the paper is structured as follows. In Section 2, we  study strong solutions of  {SDE's} which are driven by a finite number of Brownian motions and Poisson random measures which are mutually independent. Section 3 is devoted to the  construction of CBI-processes with competition in a L\'evy random environment which is an extension of the models introduced in Bansaye et al.  \cite{Bapa} and Palau and Pardo  \cite{PP}. In particular, we study the long term behaviour of CB-processes in a L\'evy random  environment. We finish our exposition by studying  a population model with competition in a L\'evy random environment which can be considered as an extension  of the model of Evans et al. \cite{EHS}. In particular, we study its long time behaviour and the Laplace transform of its first passage time below a level under the assumption that the environment has no negative jumps.

\section{Stochastic differential equations}
Stochastic differential equations  with jumps have been playing an ever more important role in various domains of applied probability theory such as financial mathematics or mathematical biology. Under Lipschitz conditions, the existence and uniqueness of strong solutions of SDE's with jumps can be established by arguments based on Gronwall's inequality and  results on continuous-type equation, see for instance the monograph of Ikeda and Watanabe \cite{IW}. In view of the results of Fu and Li \cite{FuLi} and Dawson and Li \cite{DL} weaker conditions would be sufficient for the existence and uniqueness of strong solutions for one-dimensional equations.  

Fu and Li \cite{FuLi} motivated by describing CBI processes via SDE's, studied general SDE's that describes  non-negative processes with jumps under   ge\-ne\-ral conditions. The authors in \cite{FuLi} (see also \cite{DL,LiPu}) provided criteria for the existence and uniqueness of strong solutions of those equations. The main idea of their criteria is to assume a monotoni\-ci\-ty condition on the kernel associated with the compensated noise so that the continuity conditions can be weakened.  Nonetheless, their criteria do not include the case where the branching mechanism of a CBI process has infinite mean and also the possibility of including a general random environment. This excludes some interesting models that can be of particular interest for applications.

Our goal in this section is to describe a general one-dimensional SDE that may relax this moment condition of Fu and Li \cite{FuLi} and also include some extra randomness that can help us to define branching processes in more general  random environment that those considered by Bansaye et al. \cite{Bapa} and Palau and Pardo \cite{PP}.

For $m, d,  l\ge 1$,  we define  the index sets $I=\{1,\dots, m\}$, $J=\{1,\dots, l\}$ and $K=\{1,\ldots, d\}$,  and take  $(U_i)_{i\in I}$  and $(V_j)_{j\in J}$ be separable topological spaces whose topologies can be defined by complete metrics. Suppose that $(\mu_i)_{i\in I}$ and $(\nu_j)_{j\in J}$ are $\sigma$-finite Borel measures such that each $\mu_i$ and $\nu_j$ are defined on  $U_i$ and  $V_j$, respectively.  We say  {that} the parameters $(b, (\sigma_k)_{k\in K},  (h_i)_{i\in I}, (g_j)_{j\in J})$ are \textit{admissible} if
\begin{itemize}
\item[i)] $b: \R_+\rightarrow \R$ is a continuous function  such that $b(0)\ge 0$,
\item[ii)] for $k\in K$, $\sigma_k:\R_+\rightarrow \R_+$ is a  continuous function such that $\sigma_k(0)=0$,
\item[iii)] for $i\in I$, let  $g_i:\R_+\times U_i\rightarrow \R$ be Borel functions such that  ${\sum}_{i\in I}g_i(x,u_i)+x\geq 0$ for $x\geq0$ and  $u_i\in U_i$,
\item[iv)] for $j\in J$, let  $h_j:\R_+\times V_j\rightarrow \R$ be Borel functions such that $h_j(0,v_j)=0$ and ${\sum}_{j\in J}h_j(x,v_j)+x\geq 0$ for $x>0$ and  $v_j\in V_j$.
\end{itemize}
For each $k\in K$,  let $B^{(k)}=(B_t^{(k)},t\geq 0)$ be  a standard Brownian motion.  We also let $(M_{i})_{i\in I}$ and $(N_j)_{j\in J}$ be  two sequences of Poisson random measures  such that  each  $M_i(\ud s,\ud u)$ and  $N_j(\ud s,\ud u) $ are defined on $\R_+\times U_i$ and $\R_+\times V_j$, respectively, and with  intensities  given by $\ud s\mu_i(\ud u)$ and $\ud s\nu_j(\ud v)$. We also suppose that $(B^{(k)})_{k\in K}$, $(M_{i})_{i\in I}$ and $(N_j)_{j\in J}$ are independent of each other. The compensated measure of $N_j$ is denoted by $\widetilde{N}_j$.

For each $i\in I$, let $W_i$ be a subset in $U_i$ such that $\mu_i(U_i\setminus W_i)<\infty$. For our purposes, we consider the following conditions on  the parameters $(b, (\sigma_k)_{k\in K},  (h_i)_{i\in I}, (g_j)_{j\in J})$:
\begin{itemize}
\item[a)] For each $n$, there is a positive constant $A_n$ such that
$$ \sum_{i\in I}\int_{W_i}|g_i(x,u_i)\wedge 1|\mu_i(\ud u_i)\leq A_n(1+x),\qquad \textrm{for every }\quad x\in[0,n].$$

\item[b)] Let $b(x)=b_1(x)-b_2(x)$,  where $b_1$ is a continuous function and $b_2$ is a non-decreasing continuous function. For each $n\geq 0$, there is a non-decreasing concave function $z\mapsto r_n(z)$ on $\R_+$ satisfying $\int_{0+} r_n(z)^{-1}\ud z=\infty$ and
$$|b_1(x)-b_1(y)|+\sum_{i\in I}\int_{W_i}|g_i(x,u_i)\wedge n-g_i(y,u_i)\wedge n|\mu_i(\ud u_i)\leq r_n (|x-y|)$$
for every $0\leq x,y\leq n$.

\item[c)] For each $n\ge 0$ and $(v_1,\cdots,v_l)\in\mathcal{V}$, the function $x\mapsto x+ {\sum}_{j\in J}h_j(x,v_j)\wedge n$ is non-decreasing and there is a positive constant $B_n$ such that for every $0\leq x,y\leq n$, 
$$\sum_{k\in K}|\sigma^2_k(x)-\sigma^2_k(y)|+\sum_{j\in J}\int_{V_j}\Big(|l_j(x,y,v_j)|\wedge l^2_j(x,y,v_j)\Big)\nu_j(\ud v_j)\leq B_n |x-y|$$
where $l_j(x,y,v_j)=h_j(x,v_j)\wedge n-h_j(y,v_j)\wedge n$.

\end{itemize}

A non-negative process $Z=(Z_t, t\ge 0)$ with c\`adl\`ag paths  is called a {\it solution} of 
\begin{equation}\label{sde}
\begin{split}
 Z_t=&Z_0+\int_0^t b(Z_s) \ud s+\sum_{k\in K}\int_0^t \sigma_k( Z_s)\ud B^{(k)}_s \\
 & +\sum_{i\in I}\int_0^t\int_{U_i}g_i(Z_{s-},u_i)M_i(\ud s,\ud u_i)+\sum_{j\in J}\int_0^t\int_{V_j}h_j(Z_{s-},v_j)\widetilde{N}_j(\ud s,\ud v_j),
\end{split}
\end{equation}
if it satisfies the stochastic differential equation a.s. for every $t\ge 0$. We say that $Z$ is a {\it strong solution} if, in addition, it is adapted  to the augmented natural filtration generated by $(B^{(k)})_{k\in K}$, $(M_{i})_{i\in I}$ and $(N_j)_{j\in J}$.
\begin{theorem}\label{Existencia Z}
Suppose that $(b, (\sigma_k)_{k\in K},  (h_i)_{i\in I}, (g_j)_{j\in J})$ are admissible parameters satisfying conditions a), b) and c).  Then, the stochastic differential equation (\ref{sde}) has a unique non-negative strong solution. The process $Z=(Z_t, t\geq 0)$ is a Markov process and its infinitesimal generator $\mathcal{L}$ satisfies, for every $f\in C^2_b(\overline{\R}_+),$\footnote{$\R_+=[0,\infty)$, $\overline{\R}_+=[0,\infty]$ and $C^2_b(\overline{\R}_+)=\{\mbox{twice differentiable functions such that } f(\infty)=0\}$}
 \begin{equation}
 \begin{split}\label{generado}
 \mathcal{L}f(x)=b(x)f'(x)+&\frac{1}{2}f''(x)\sum_{k\in K}\sigma_k^2(x)+\sum_{i\in I}\int_{U_i} \Big(f(x+g_i(x,u_i))-f(x)\Big)\mu_i(\ud u_i)\\
 &\hspace{.5 cm}+\sum_{j\in J}\int_{V_j} \Big(f(x+h_j(x,v_j))-f(x)-f'(x)h_j(x,v_j)\Big)\nu_j(\ud v_j). \\
  \end{split}
 \end{equation}
\end{theorem}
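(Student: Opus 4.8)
The plan is to follow the classical localization-and-pasting scheme used by Fu--Li \cite{FuLi} and Dawson--Li \cite{DL}, adapting it to accommodate the extra Brownian motions and Poisson random measures and the possibility of explosion in finite time. First I would split the driving noise into a ``large jump'' part and a ``small jump'' part: for the Poisson integrals $\int_{U_i}g_i(Z_{s-},u_i)M_i(\ud s,\ud u_i)$ one removes the jumps coming from $U_i\setminus W_i$, which occur at a finite rate by the assumption $\mu_i(U_i\setminus W_i)<\infty$, so that between two consecutive such jumps the equation is driven only by $(B^{(k)})_{k\in K}$, the restricted measures $M_i|_{W_i}$ and the compensated measures $\widetilde N_j$. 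For this reduced equation condition a) guarantees that the drift from the uncompensated small jumps, namely $x\mapsto \sum_{i}\int_{W_i}g_i(x,u_i)\mu_i(\ud u_i)$, is well-defined and of at most linear growth on each interval $[0,n]$, so one may absorb it into a modified drift $\tilde b$ which is still of the form (continuous) minus (non-decreasing continuous) and still satisfies the Yamada--Watanabe-type modulus-of-continuity bound of condition b).

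The core step is then pathwise existence and uniqueness for the reduced, stopped equation on $[0,n]$, i.e.\ after composing every coefficient with a truncation at level $n$. Uniqueness follows from the Yamada--Watanabe argument: one approximates $|x|$ by smooth functions $\phi_\varepsilon$ built from the concave function $r_n$ (using $\int_{0+}r_n(z)^{-1}\ud z=\infty$), applies It\^o's formula to $\phi_\varepsilon(Z_t-Z'_t)$ for two solutions $Z,Z'$, and controls the four contributions: the drift term using the $r_n$-modulus in b); the Brownian terms using the Lipschitz bound on $\sigma_k^2$ in c); the compensated-jump terms using the $|l_j|\wedge l_j^2$ bound in c) together with the monotonicity of $x\mapsto x+\sum_j h_j(x,v_j)\wedge n$, which is exactly what makes the nonsmoothness of $\phi_\varepsilon$ harmless for the jump part (this is the Dawson--Li trick); and the uncompensated small-jump drift using the same $r_n$-modulus via b). Letting $\varepsilon\to0$ and invoking Gronwall gives pathwise uniqueness, hence by Yamada--Watanabe strong existence once weak existence is shown; weak existence is obtained by a standard tightness/approximation argument (e.g.\ Euler-type approximations or approximating the coefficients by Lipschitz ones), noting that the truncation at level $n$ makes all coefficients bounded. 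Non-negativity is preserved because, by the admissibility conditions iii)--iv) and c), whenever $Z_{s-}=0$ the drift $b(0)\ge0$ and the jumps $g_i(0,u_i)\ge0$, $h_j(0,v_j)=0$ keep the process in $\R_+$; here one argues as in \cite{FuLi} by a comparison with the solution started from $0$.

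Next I would remove the truncation and the large jumps. Define $\tau_n=\inf\{t:Z_t\ge n\}$; by pathwise uniqueness the solutions of the level-$n$ truncated equations are consistent on $[0,\tau_n)$, so they paste together into a solution of the reduced equation on $[0,\zeta)$ where $\zeta=\lim_n\tau_n$ is the explosion time, with $Z_t=\infty$ for $t\ge\zeta$ — this is precisely the point where we do \emph{not} need a global growth bound and where finite-time explosion is allowed. Then reinsert the finitely many large jumps from $M_i|_{U_i\setminus W_i}$ one at a time: between jump epochs the process evolves as the reduced solution, and at each epoch it jumps by $g_i(Z_{s-},u_i)$, which by iii) keeps it non-negative (or sends it to $+\infty$, after which it stays there). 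Since on any finite interval there are only finitely many such epochs, this recursive construction yields the unique strong solution of the full equation (\ref{sde}); uniqueness for the full equation follows because any solution must agree with the reduced solution between large jumps, again by pathwise uniqueness. The Markov property is then standard from strong existence and uniqueness of the time-homogeneous SDE, and the generator formula (\ref{generado}) follows by applying It\^o's formula to $f(Z_t)$ for $f\in C^2_b(\overline\R_+)$, taking expectations (the compensated integrals against $\widetilde N_j$ and $\ud B^{(k)}$ are martingales because $f$ and its derivatives are bounded and the relevant integrands are controlled on each $[0,n]$ by conditions a) and c)), differentiating at $t=0$, and checking that the $f(\infty)=0$ convention makes the boundary point $\infty$ absorbing consistently with the construction.

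The main obstacle I anticipate is the uniqueness argument for the jump part in the presence of the non-Lipschitz modulus $r_n$: one must run the Yamada--Watanabe smoothing \emph{simultaneously} with the Dawson--Li monotonicity argument for the compensated Poisson integrals, and verify that the error terms from the second derivative of $\phi_\varepsilon$ against $\sum_j\int|l_j|\wedge l_j^2\,\nu_j$ and against $\sum_k|\sigma_k^2(x)-\sigma_k^2(y)|$ both vanish in the limit — the bookkeeping here, together with correctly handling the uncompensated small-jump drift inside the same estimate, is the delicate part; everything else is routine localization and pasting.
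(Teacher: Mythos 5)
Your proposal is correct and follows essentially the same route as the paper: truncation at level $n$, restriction of the jump measures to finite-mass subsets, pathwise uniqueness via Yamada--Watanabe smoothing combined with the Dawson--Li monotonicity trick for the compensated jumps, tightness and weak limits to restore the full jump measures, interlacing of the finitely many large jumps from $U_i\setminus W_i$, pasting along the stopping times $\tau_n$ to allow explosion, and It\^o's formula for the generator. The only cosmetic differences are that the paper's smoothing functions $f_\ell$ are the standard ones adapted to the Lipschitz bound on $\sigma_k^2$ (the concave modulus $r_n$ enters only through the drift and uncompensated-jump estimates and a final Bihari/Osgood step), and the paper builds its approximating equations up from a continuous SDE by adding one finite-rate jump component at a time rather than stripping the large jumps off first.
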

 
 \begin{proof} { We first prove that any solution of \eqref{sde} is non-negative.  Observe that one can extend the functions $b,\sigma_k,g_i,h_j$ to $\R$ in such a way that $b$ is continuous with $b(x)\geq 0$ for all $x\leq 0$, and $\sigma_k(x)=g_i(x,u_i)=h_j(x,v_j)=0$ for all $x\leq 0$ and $u_i\in U_i$, $v_j\in V_j$.  We now proceed as  in the proof of Proposition 2.1 in  \cite{FuLi}.  Let $\epsilon>0$ and  $\tau:=\inf\{t\geq 0: Z_t\leq -\epsilon\}$,  be such that  $\Prob{\tau<\infty}>0$. From  (iii) and (iv), it is clear that $Z$ cannot jump downwards implying that on the event $\{\tau<\infty\}$, $Z_{\tau}= Z_{\tau-}=-\epsilon$ and $\tau>\varsigma:=\sup\{s<\tau:Z_t\leq 0 \mbox{ for all } s\leq t\leq\tau\}$. Let $r\geq 0$ be such that $\Prob{\tau>r>\varsigma}>0$, thus the contradiction occurs by observing that $Z_{t\wedge \tau}$ is non-decreasing in $(r,\infty)$ and $Z_{r}>-\epsilon$. In other words any solution of \eqref{sde} is non-negative. }
 	
 	{ Now,} for each $i\in I$ and $j\in J$, let $\{W_i^\ell: \ell\in\N\}$ and $\{V_j^\ell: \ell\in\N\}$ be non-increasing sequences of Borel subsets of $W_i$ and $V_j$, such that  $\underset{\ell\in\N}\bigcup W_i^\ell=W_i$ and $\mu_i(W_i^\ell)<\infty$;  $\underset{\ell\in\N}\bigcup V_j^\ell=V_j$ and $\nu_j(V_j^\ell)<\infty$, respectably. { From Theorem IV.2.3 in \cite{IW}}, for each $n,\ell\in \N$, there is a non-negative weak solution to
 	\begin{equation}
 	\begin{split}\label{continuo}
 	Z_t=Z_0&+\int_0^t b(Z_s\wedge n) \ud s+\sum_{k\in K}\int_0^t \sigma_k( Z_s^{(n)}\wedge n)\ud B^{(k)}_s	\\
 	&\hspace{2cm}
	-\sum_{j\in J}\int_0^t\int_{V_j^\ell}\big(h_j(Z_{s}\wedge n,v_j)\wedge n\big) \mu_j(\ud v_j)\ud s .
 	\end{split}
 	\end{equation}
 By {H\"older's} inequality and  (c), the functions
 $x\mapsto \int_{V_j^\ell}\big(h_j(x\wedge n,v_j)\wedge n\big) \nu_j(\ud v_j)$
 are continuous for each $j\in J$ and $\ell\in \N$. Moreover $ b(x\wedge n)-\sum_{j\in J}\int_{V_j^\ell}\big(h_j(x\wedge n,v_j)\wedge n\big) \nu_j(\ud v_j)$ is the difference between the continuous function $b_1(x\wedge n)+\sum_{j\in J} (x\wedge n)\nu(V_j^\ell)$ and the non-decreasing continuous function $b_2(x\wedge n)+\sum_{j\in J}\int_{V_j^\ell}[(x\wedge n)+\big(h_j(x\wedge n,v_j)\wedge n\big)] \nu_j(\ud v_j)$. Then, by Lemma \ref{pathwise} (see the Appendix) the pathwise uniqueness holds for \eqref{continuo}, so the equation has a unique non-negative strong solution (see \cite[p.104]{situ}).

  For each $\ell, n\in \N$, by applying Lemma \ref{subconjuntow} (see the Appendix) to the admissible parameters $(b-\sum_{j\in J}\int_{V_j^\ell}\big(h_j(\cdot,v_j)\wedge n\big) \nu_j(\ud v_j),(\sigma_k)_{k\in K}, 0, (h_i)_{i\in I}\cup (g_j)_{j\in J}) $ we can add the integrals of $(h_i)_{i\in I}$ and $ (g_j)_{j\in J}$ in the spaces $(W_i^{\ell})_{i\in I}$ and $ (V_j^{\ell})_{j\in J}$, respectably. In other words, we deduce that for each $n,\ell\in \N$, there is a unique non-negative strong solution to 
 	\begin{equation*}
 	\begin{split}\label{z,n,m}
 	Z_t^{(n,\ell)}=Z_0&+\int_0^t b(Z_s^{(n,\ell)}\wedge n) \ud s+\sum_{k\in K}\int_0^t \sigma_k( Z_s^{(n,\ell)}\wedge n)\ud B^{(k)}_s \\
 	&\hspace{1cm}+\sum_{i\in I}\int_0^t\int_{W_i^\ell}\big(g_i(Z_{s-}^{(n,\ell)}\wedge n,u_i)\wedge n\big) M_i(\ud s,\ud u_i)\\
 	&\hspace{2cm}+\sum_{j\in J}\int_0^t\int_{V_j^\ell}\big(h_j(Z_{s-}^{(n,\ell)}\wedge n,v_j)\wedge n\big) \widetilde{N}_j(\ud s,\ud v_j).
 	\end{split}
 	\end{equation*}

 Let denote by $D(\R_{+},\R_{+})$ the space of c\`adl\`ag positive functions taking values on  $\R_+$.	For a fixed $n\in \N$, Lemma \ref{sequence znm} (see the Appendix) implies that the sequence $\{(Z_t^{(n,\ell)}, t\ge 0):\ell \in \N\}$ is tight in $D(\R_{+},\R_{+})$.
	Moreover,  by Lemma \ref{weak limit} (see the Appendix) any weak  limit point $(Z^{(n)}_t: t\geq 0)$  is a non-negative weak solution to
 	\begin{equation}
 	\begin{split}\label{sdenw}
 	Z_t^{(n)}=Z_0&+\int_0^t b(Z_s^{(n)}\wedge n) \ud s+\sum_{k\in K}\int_0^t \sigma_k( Z_s^{(n)}\wedge n)\ud B^{(k)}_s \\
 	&\hspace{1cm}+\sum_{i\in I}\int_0^t\int_{W_i}\big(g_i(Z_{s-}^{(n)}\wedge n,u_i)\wedge n\big) M_i(\ud s,\ud u_i)\\
 	&\hspace{2cm}+\sum_{j\in J}\int_0^t\int_{V_j}\big(h_j(Z_{s-}^{(n)}\wedge n,v_j)\wedge n\big) \widetilde{N}_j(\ud s,\ud v_j).
 	\end{split}
 	\end{equation}
 {Using again Lemma \ref{pathwise} (see the Appendix),  pathwise uniqueness holds for \eqref{sdenw}. }This guaranties that there is a unique non-negative strong  solution of (\ref{sdenw}). (see \cite[p.104]{situ}). Next, we apply Lemma \ref{subconjuntow} (see the Appendix) that allows us to replace the space $W_i$ by $U_i$ in the SDE (\ref{sdenw}). In other words we  deduce  that for $n\geq 0$ there is a unique non-negative strong  solution of   
 	\begin{equation}
 	\begin{split}\label{sden}
 	Z_t^{(n)}=Z_0&+\int_0^t b(Z_s^{(n)}\wedge n) \ud s+\sum_{k\in K}\int_0^t \sigma_k( Z_s^{(n)}\wedge n)\ud B^{(k)}_s \\
 	&\hspace{1cm}+\sum_{i\in I}\int_0^t\int_{{U_i}}\big(g_i(Z_{s-}^{(n)}\wedge n,u_i)\wedge n\big) M_i(\ud s,\ud u_i)\\
 	&\hspace{2cm}+\sum_{j\in J}\int_0^t\int_{V_j}\big(h_j(Z_{s-}^{(n)}\wedge n,v_j)\wedge n\big) \widetilde{N}_j(\ud s,\ud v_j).
 	\end{split}
 	\end{equation}

 Finally, we proceed to show that there is a unique non-negative strong  solution to the SDE (\ref{sde}). In order to do so, we first define $\tau_p=\inf\{t\geq 0: Z^{(p)}_t \geq q\}$,  for $p\geq q$, and then we prove that the sequence $(\tau_p, p\ge q)$ is non-decreasing and that $Z_t^{(p)}=Z_t^{(n)}$ for $p\leq n$ and $t< \tau_p$.  
 
 Observe that the trajectory $t\mapsto Z_t^{(p)}$ has no jumps larger than $p$ on the interval $[0,\tau_p)$. Since the Poisson random measures are independent, they do not jump simultaneously and therefore for each $i\in I$ and $j\in J$, we have for $0\le t<\tau_p$,
 \[
  g_i(Z_{t}^{(p)},u)\leq p\qquad  \textrm{and} \qquad h_j(Z_{t}^{(p)},v)\leq p, \quad u\in U_i,v\in V_j.
  \] 
  This implies  that $Z_t^{(p)}$ satisfies (\ref{sde}) on the interval $[0,\tau_p)$.  For $q\leq p\leq n,$ let $(Y^{(n)}_t, t\geq 0)$ be the strong solution to    
 \begin{align*}
 Y_t^{(n)}=&Z^{(p)}_{\tau_p-} +\int_0^t b(Y_s^{(n)}\wedge n) \ud s+\sum_{k\in K}\int_0^t \sigma_k( Y_s^{(n)}\wedge n)\ud B^{(k)}_{\tau_p+s} \\
    &\hspace{1.5cm}+\sum_{i\in I}\int_0^t\int_{U_i}\big(g_i(Y_{s-}^{(n)}\wedge n,u)\wedge n\big) M_i(\tau_p+\ud s,\ud u)\\
    &\hspace{3.5cm}+\sum_{j\in J}\int_0^t\int_{V_j}\big(h_j(Y_{s-}^{(n)}\wedge n,v)\wedge n\big) \widetilde{N}_j(\tau_p+\ud s,\ud v).
    \end{align*}
We  define $\tilde{Y}_t^{(n)}=Z_t^{(p)}$ for $0\leq t\leq \tau_p$ and $\tilde{Y}_t^{(n)}=Y_{t-\tau_p}^{(n)}$ for $t\geq \tau_p$. Note that $(\widetilde{Y}_t^{(n)}, t\ge 0)$ is  solution to (\ref{sden}). From the strong uniqueness, we deduce that  $Z_t^{(n)}=\widetilde{Y}_t^{(n)}$ for all $t\geq 0$. In particular, we have that $Z_t^{(n)}=Z_t^{(p)}<p$ for $0\leq t <\tau_p$. Consequently, the sequence $(\tau_p, p\ge q)$ is non-decreasing. 

Next, we define the process $Z=(Z_t,t\geq 0)$ as
 
 \[ 
 Z_t = \left\{ \begin{array}{ccl}
 Z_t^{(p)} & \textrm{if} & t< \tau_p, \\
 \infty & \textrm{if} & t\geq \underset{p\rightarrow\infty}{\lim}\tau_p. \end{array} \right.
 \] 
 \noindent It is not difficult to see that $Z$ is a weak solution to (\ref{sde}). In order to prove our result, we consider two solutions to (\ref{sde}), $Z'$ and $Z''$,  and consider $\tau'_p=\inf\{t\geq 0: Z'_t \geq p\}$, $\tau''_p=\inf\{t\geq 0: Z''_t \geq p\}$ and $\tau_p=\tau_p'\wedge \tau''_p$. Therefore  $Z'$ and $Z''$ satisfy (\ref{sden}) on $[0,\tau_p)$, implying that they are indistinguishable on $[0,\tau_p)$. If $\tau_{\infty}=\underset{p\rightarrow\infty}{\lim}\tau_p<\infty$, we have two possibilities either $Z'_{\tau_{\infty-}}=Z''_{\tau_{\infty-}}=\infty$ or one of them has a jump of infinity size at $\tau_{\infty}$. In the latter case, this jump comes from an atom of one of the Poisson random measures $(M_i)_{i\in I}$ or $(N_j)_{j\in J}$, so both processes have it. Since after this time both processes are equal to $\infty$,  we obtain that $Z'$ and $Z''$ are indistinguishable. In other words,  there is a unique strong solution to (\ref{sde}). (see \cite[p.104]{situ}). The strong Markov property follows since  there is a strong solution, the integrators are L\'evy processes and the integrand functions are not time dependent (see for instance \cite[Theorem V.32]{Protter}, where the Lipschitz property  is just needed to guarantee the existence and uniqueness of the  strong solution) and  by It\^o's formula it is easy to show that the strong solution $Z$ has generator given by (\ref{generado}).
 \end{proof}

\section{CBI-processes with competition in a L\'evy random environment}
 In this section, we construct  a branching model in continuous time and space that is affected by a random environment as the unique strong solution of a SDE that satisfies the conditions of Theorem 1. 
In this model, the random environment is driven by a general L\'evy process.
 
Our model is a natural extension of the  CB-processes in random environment studied by  Bansaye et al. \cite{Bapa},  in the case where the L\'evy process has paths of bounded variation,  and by Palau and Pardo \cite{PP}, in the case where the random environment is driven by a Brownian motion with drift.

In order to define  CBI-processes in a L\'evy random environment (CBILRE for short), we first introduce the objects that are involve on the {\it branching,  immigration} and {\it environmental} parts.
For the {\it branching part}, we introduce  $B^{(b)}=(B^{(b)}_t,t\geq 0)$ a standard Brownian motion and $N^{(b)}(\ud s,\ud z,\ud u)$    a Poisson random measure independent of $B^{(b)}$, with intensity $\ud s\Lambda(\ud z)\ud u$ where  $\Lambda(\ud z)=\mu(\ud z)\mathbf{1}_{(0,\infty)}(z)+q\delta_\infty(\ud z)$,   for $q\ge 0$. We denote by $\widetilde{N}^{(b)}$ for the compensated measure of $N^{(b)}$ and recall that the measure $\mu$ is concentrated on $(0,\infty)$ and satisfies
\[
\int_{(0,\infty)} (1\wedge z^2)\mu(\ud z)<\infty.
\]
The {\it immigration} term is given by a Poisson random measure $M(\ud s,\ud z)$ with intensity $\ud s\nu(\ud z)$ where the measure $\nu$ is supported in $(0,\infty)$ and satisfies
\[
\int_{(0,\infty)} (1\wedge z)\nu(\ud z)<\infty.
\]
Finally, for the {\it environmental} term, we introduce  $B^{(e)}=(B^{(e)}_t,t\geq 0)$  a standard Brownian motion and $N^{(e)}(\ud s, \ud z)$  a Poisson random measure in $\R_+\times \R$ independent of $B^{(e)}$ with intensity $\ud s\pi(\ud y)$, $\widetilde{N}^{(e)}$ its compensated version and $\pi$ is a measure concentrated on $\R\setminus\{0\}$ such that
 $$\int_{\R} (1\wedge z^2)\pi(\ud z)<\infty.$$
We will assume that all the objects involve in the branching, immigration and environmental terms  are mutually independent.

 A CBLRE with immigration and competition is defined as  the  solution of the SDE
\begin{equation}\label{csbplre}
\begin{split}
 Z_t=&Z_0+\int_0^t \big(\mathbf{d}+aZ_s\big) \ud s+\int_0^t \sqrt{2\gamma^2 Z_s}\ud B^{(b)}_s  \\
&-\int_0^t \beta(Z_s) \ud s+\int_0^t\int_{(0,\infty)}zM^{(im)}(\ud s,\ud z)+\int_0^t  Z_{s-}\ud S_s\\
&+\int_0^t\int_{(0,1)}\int_0^{Z_{s-}}z\widetilde{N}^{(b)}(\ud s,\ud z,\ud u)+\int_0^t\int_{[1,\infty)}\int_0^{Z_{s-}}zN^{(b)}(\ud s,\ud z,\ud u),
\end{split}
\end{equation}
where  $a\in \R$, $\mathbf{d}, \gamma\ge 0$, $\beta$ is a continuous non-decreasing function on $[0,\infty)$ with $\beta(0)=0$, 
 \begin{equation}\label{env}
 S_t=\alpha t+\sigma B^{(e)}_t+\int_0^t\int_{(-1,1)}(e^z-1) \widetilde{N}^{(e)}(\ud s,\ud z)+\int_0^t\int_{\R\setminus (-1,1)}(e^z-1)N^{(e)}(\ud s,\ud z),
 \end{equation}
 with $\alpha\in \mathbb{R}$ and $\sigma\geq 0$. 
\begin{corollary}\label{corollary1}
The stochastic differential equation (\ref{csbplre}) has a unique non-negative strong solution. The CBLRE $Z=(Z_t, t\geq 0)$ is a Markov process and its infinitesimal generator $\mathcal{A}$ satisfies, for every $f\in C^2_b(\overline{\R}_+),$
 \begin{equation*}
 \begin{split}\label{generador}
 \mathcal{A}f(x)&= \Big(ax+\alpha x-\beta(x)+\mathbf{d}\Big)f'(x)+\int_{(0,\infty)} \Big(f(x+z)-f(x)\Big)\nu(\ud z)\\
 &+\left(\gamma^2 x+\frac{\sigma^2}{2}x^2\right)f''(x)+x\int_{(0,\infty)} \Big(f(x+z)-f(x)-zf'(x)\mathbf{1}_{\{z<1\}}\Big)\Lambda(\ud z) \\
& \hspace{4cm}+\int_{\mathbb{R}} \Big(f(xe^z)-f(x)-x(e^z-1)f'(x)\mathbf{1}_{\{|z|<1\}}\Big)\pi(\ud z).
  \end{split}
 \end{equation*}
\end{corollary}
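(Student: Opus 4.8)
The plan is to deduce Corollary~\ref{corollary1} directly from Theorem~\ref{Existencia Z} by exhibiting the SDE \eqref{csbplre} as an instance of the general equation \eqref{sde} for a suitable choice of admissible parameters, then checking conditions a), b) and c). First I would identify the data: take $d=1$ with $\sigma_1(x)=\sqrt{2\gamma^2 x+\sigma^2 x^2}$ coming from the two Brownian integrals $\sqrt{2\gamma^2 Z_s}\,\ud B^{(b)}_s$ and $\sigma Z_s\,\ud B^{(e)}_s$ (after writing $S$ via \eqref{env} and collecting the two independent Gaussian pieces into a single martingale of the right quadratic variation). For the drift I set $b(x)=\mathbf{d}+ax+\alpha x-\beta(x)+x\int_{(-1,1)}(e^z-1-z)\pi(\ud z)$, the last term being the drift correction produced when the environmental small-jump integral against $N^{(e)}$ is rewritten against $\widetilde{N}^{(e)}$; note $b$ is continuous and $b(0)=\mathbf{d}\ge 0$. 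The compensated (finite-second-moment) noises are $N^{(b)}$ restricted to $z\in(0,1)$ with $h_1(x,(z,u))=z\Ind{u<x}$ on $V_1=(0,1)\times\R_+$, and $N^{(e)}$ restricted to $|z|<1$ with $h_2(x,z)=x(e^z-1)$ on $V_2=(-1,1)$. The pure-jump (uncompensated) measures are $N^{(b)}$ restricted to $z\in[1,\infty]$ with $g_1(x,(z,u))=z\Ind{u<x}$ (with the convention $z=\infty$ giving a jump to $\infty$, handled exactly as the $q\delta_\infty$ term in Theorem~\ref{Existencia Z}), $N^{(e)}$ restricted to $|z|\ge 1$ with $g_2(x,z)=x(e^z-1)$, and the immigration measure $M$ with $g_3(x,z)=z$ on $U_3=(0,\infty)$. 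One checks admissibility: $g_i(x,\cdot)+x\ge 0$ holds since $z\Ind{u<x}\ge 0$, $xe^z\ge 0$, and $z\ge 0$; and $h_j(x,\cdot)+x\ge 0$ holds since $x+z\Ind{u<x}\ge 0$ and $xe^z\ge 0$, with $h_j(0,\cdot)=0$.

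Next I would verify condition a). The sets $W_i$ are chosen so that $\mu_i(U_i\setminus W_i)<\infty$: for the branching jump space one takes $W_1=\{(z,u):z<1\}$ within $U_1=(0,\infty]\times\R_+$ intersected with the relevant $u$-range, but more simply the finite-activity parts ($z\ge 1$ for $N^{(b)}$, $|z|\ge 1$ for $N^{(e)}$, and all of the immigration part if $\nu$ is infinite one keeps $W_3$ a suitable truncation) are excised; then on $W_1$ one has $\int |z\Ind{u<x}\wedge 1|\,\Lambda(\ud z)\ud u\le x\int_{(0,1)}(z\wedge 1)\mu(\ud z)\le x\int_{(0,\infty)}(1\wedge z^2)^{1/2}\,\mu(\ud z)$, which is $\le A(1+x)$; for the immigration part $\int_{W_3}|z\wedge 1|\nu(\ud z)\le\int_{(0,\infty)}(1\wedge z)\nu(\ud z)<\infty$, a constant; for the environmental small jumps one does not need a) since those sit in the $h$-family. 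So a) holds with a constant $A_n$ independent of $n$.

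Then I would check b) and c), which control the Yamada--Watanabe-type pathwise uniqueness. For b): split $b=b_1-b_2$ with $b_2=\beta$ (non-decreasing, continuous, $\beta(0)=0$) and $b_1(x)=\mathbf{d}+(a+\alpha+c_\pi)x$ with $c_\pi=\int_{(-1,1)}(e^z-1-z)\pi(\ud z)$ finite; $b_1$ is Lipschitz on $[0,n]$. The $g$-contribution $\sum_i\int_{W_i}|g_i(x,\cdot)\wedge n-g_i(y,\cdot)\wedge n|\mu_i$ is handled for the immigration piece ($g_3(x,z)=z$ does not depend on $x$, contributes $0$) and for the branching piece $g_1(x,(z,u))=z\Ind{u<x}$ gives $\int_{W_1}|z\Ind{u<x}-z\Ind{u<y}|\,\Lambda\ud u=|x-y|\int_{(0,1)}(z\wedge n)\mu(\ud z)\le C_n|x-y|$, which is $\le r_n(|x-y|)$ for a linear (hence concave, integrable-reciprocal-at-$0$) $r_n$. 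For c): $\sigma_1^2(x)=2\gamma^2 x+\sigma^2 x^2$ is locally Lipschitz, so $\sum_k|\sigma_k^2(x)-\sigma_k^2(y)|\le B_n|x-y|$ on $[0,n]$; the map $x\mapsto x+\sum_j h_j(x,v_j)\wedge n$ is non-decreasing because $x\mapsto x+z\Ind{u<x}$ is non-decreasing and $x\mapsto x+(x(e^z-1)\wedge n)=\min(xe^z,x+n)$ is non-decreasing; finally the key estimate $\sum_j\int_{V_j}(|l_j|\wedge l_j^2)\nu_j\le B_n|x-y|$ reduces, for the branching part, to $\int_{(0,1)}\int_{\R_+}\big(|z\Ind{u<x}-z\Ind{u<y}|\wedge (z\Ind{u<x}-z\Ind{u<y})^2\big)\ud u\,\mu(\ud z)=|x-y|\int_{(0,1)}(z\wedge z^2)\mu(\ud z)\le|x-y|\int(1\wedge z^2)\mu(\ud z)$, exactly as in Fu--Li; and for the environmental part, with $l_2(x,y,z)=(x(e^z-1)\wedge n)-(y(e^z-1)\wedge n)$, one bounds $|l_2(x,y,z)|\le|e^z-1|\,|x-y|$ and $|l_2|\wedge l_2^2\le (|e^z-1|\wedge (e^z-1)^2)(|x-y|\vee|x-y|^2)\le C_n(1\wedge z^2)|x-y|$ on $|z|<1$, $|x-y|\le n$, using $\int_{(-1,1)}(1\wedge z^2)\pi(\ud z)<\infty$.

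The main obstacle is the environmental multiplicative jump term $h_2(x,v)=x(e^z-1)$: unlike the affine branching jumps $z\Ind{u<x}$ in Fu--Li, this coefficient grows linearly in the state, so one must be careful that (i) its small-jump part genuinely satisfies the $|l|\wedge l^2$ Lipschitz bound of c) — which it does, but only after exploiting $\int_{(-1,1)}(1\wedge z^2)\pi(\ud z)<\infty$ and the elementary inequality $|e^z-1|\le C|z|$ for bounded $z$, and after absorbing the compensator drift $c_\pi x$ into $b_1$; and (ii) the large-jump part, being finite activity, can be split off into the $g$-family where only condition a) is needed, and there the bound is uniform. Once the dictionary is set up and a), b), c) are verified, Theorem~\ref{Existencia Z} gives the unique non-negative strong solution and the Markov property, and the stated generator $\mathcal{A}$ is obtained by substituting the chosen $b,\sigma_k,g_i,h_j$ into \eqref{generado}: the $\sigma_1^2$ term yields $(\gamma^2 x+\tfrac{\sigma^2}{2}x^2)f''(x)$; the $g_1,g_3,h_1$ contributions reassemble into $x\int_{(0,\infty)}(f(x+z)-f(x)-zf'(x)\Ind{z<1})\Lambda(\ud z)$ after the change of variables integrating out $u$ over $(0,x)$ and recombining the $[1,\infty]$ and $(0,1)$ ranges, and $\int_{(0,\infty)}(f(x+z)-f(x))\nu(\ud z)$ from immigration; and the $g_2,h_2$ contributions reassemble into $\int_\R(f(xe^z)-f(x)-x(e^z-1)f'(x)\Ind{|z|<1})\pi(\ud z)$, the drift corrections from compensation matching the $c_\pi x f'(x)$ term inside $b$. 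This completes the proof.
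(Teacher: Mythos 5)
Your overall strategy is exactly the paper's: read \eqref{csbplre} as an instance of \eqref{sde}, exhibit the dictionary of spaces, noises and coefficient functions, check admissibility and conditions a), b), c), and then invoke Theorem~\ref{Existencia Z} for existence, uniqueness, the Markov property and the generator. The paper's proof is essentially just this dictionary (with $K=J=\{1,2\}$, $I=\{1,2,3\}$, $\sigma_1(x)=\sqrt{2\gamma^2x}$, $\sigma_2(x)=\sigma x$, $g_1=h_1=z\Ind{u\le x}$, $g_2=h_2=x(e^z-1)$, $g_3=z$), so your more detailed verification of a)--c) is welcome. However, there is one substantive error in your dictionary. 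You put the term $x\int_{(-1,1)}(e^z-1-z)\,\pi(\ud z)$ into the drift $b$, calling it ``the drift correction produced when the environmental small-jump integral against $N^{(e)}$ is rewritten against $\widetilde{N}^{(e)}$.'' No such correction arises: in \eqref{env} the small environmental jumps are \emph{already} integrated against the compensated measure $\widetilde{N}^{(e)}$, and the $h$-family in \eqref{sde} is likewise integrated against $\widetilde{N}_j$, so $h_2(x,z)=x(e^z-1)$ on $V_2=(-1,1)$ reproduces that term verbatim. With your $b$, the solution produced by Theorem~\ref{Existencia Z} solves \eqref{csbplre} plus an extra drift $\int_0^t Z_s\,\ud s\cdot\int_{(-1,1)}(e^z-1-z)\pi(\ud z)$, and substituting into \eqref{generado} yields a generator carrying a spurious $xf'(x)\int_{(-1,1)}(e^z-1-z)\pi(\ud z)$ that does not appear in $\mathcal{A}$ and is not cancelled by anything (your closing remark that it is ``matched'' by the compensation terms is incorrect). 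The correct drift is $b(x)=\mathbf{d}+(a+\alpha)x-\beta(x)$.

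Two smaller points. First, in verifying a) and b) you repeatedly integrate the branching $g$-coefficient over $z\in(0,1)$, producing bounds like $x\int_{(0,1)}(z\wedge 1)\mu(\ud z)$ and $|x-y|\int_{(0,1)}(z\wedge n)\mu(\ud z)$; under the standing hypothesis $\int(1\wedge z^2)\mu(\ud z)<\infty$ these integrals may be infinite. This is only a slip of ranges: by your own (correct) setup the uncompensated branching jumps live on $U_1=W_1=[1,\infty]\times\R_+$, where $\Lambda$ is a finite measure, so the relevant integrals are $x\,\Lambda([1,\infty])$ and $|x-y|\,(n\wedge\cdot)\,\Lambda([1,\infty])$ and a), b) hold trivially there; the $(0,1)$ range belongs to the $h$-family and is controlled by c) exactly as in Fu--Li. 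Second, collapsing the two Gaussian integrals into a single $\sigma_1(x)=\sqrt{2\gamma^2x+\sigma^2x^2}$ is unnecessary and mildly harmful: the single Brownian motion you would construct is itself defined through $Z$, so adaptedness to the original drivers (the meaning of ``strong solution'' here) becomes circular. Theorem~\ref{Existencia Z} allows $d=2$, and keeping $\sigma_1(x)=\sqrt{2\gamma^2x}$, $\sigma_2(x)=\sigma x$ separate is both simpler and what condition c), stated for $\sum_k|\sigma_k^2(x)-\sigma_k^2(y)|$, is designed for.
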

\begin{proof}
The proof of this result is a straightforward  application of Theorem 1. Take the set of index $K=J=\{1,2\}$ and  $I=\{1,2,3\}$; the  spaces 
\begin{align*}\label{spaces}
U_1=W_1&=[1,\infty)\times \mathbb{R}_+,&   U_2=&\R\setminus (-1,1), & W_2=(-\infty,-1],\\
 U_3= W_3&=\mathbb{R}_+,& V_1=&(0,1)\times \mathbb{R}_+,&  V_2=(-1,1),
\end{align*}
with associated Poisson  random measures $M_1=N^{(b)}, M_2=N^{(e)}, M_3=M^{(im)}, N_1=N^{(b)}$ and $N_2=N^{(e)}$, respectively; and standard Brownian motions $B^{(1)}=B^{(b)}$ and $B^{(2)}=B^{(e)}$.  We  also take the functions
\begin{align*}
 b(x)&=ax-\beta(x)+\mathbf{d},& \sigma_1(x)&=\sqrt{2\gamma^2 x},& \sigma_2(x)&=\sigma x, &\\
   g_1(x, z, u)&=z\mathbf{1}_{\{u\le x\}},& g_2(x, z)&=x(e^z-1),& g_3(x,z)&=z,&\\
   h_1(x, z, u)&=z\mathbf{1}_{\{u\le x\}},&  h_2(x, z)&=x(e^z-1),&
\end{align*}
which are admissible and verify conditions a), b) and c). 
\end{proof}

Similarly to the cases of Bansaye et al. \cite{Bapa} and  Palau and Pardo \cite{PP}, we can compute the Laplace transform of a reweighted version of $Z$ given the environment and under the assumption that $\beta\equiv 0$. In order to do so, we assume that $q=0$, $\beta\equiv 0$ and 
\[
\hspace{5cm} \int_{[1,\infty)}x\mu(\ud x)<\infty. \hspace{6.8cm}(\mathbf{H}) 
 \]

 Recall that the associated branching mechanism $\psi$ satisfies 
\begin{equation*}\label{lk}
\psi(\lambda)=-a\lambda+\gamma^2
\lambda^2+\int_{(0,\infty)}\big(e^{-\lambda x}-1+\lambda x{\mathbf 1}_{\{x<1\}}\big)\mu(\ud x),\qquad \lambda\geq 0,
\end{equation*}
and observe that  from our assumption,  $|\psi^\prime(0+)|<\infty$ and
\[
\psi^\prime(0+)=-a-\int_{[1,\infty)} x\mu(\ud x).
\]
We also recall that the  immigration mechanism is given by 
$$\phi(u)=\mathtt{d}u+\int_0^{\infty} (1-e^{-u t})\nu(\ud t),\qquad u\geq 0.$$ 
For the sequel, we define the auxiliary process
  \begin{equation}\label{auxiliary}
 K_t=\mathbf{m} t+\sigma B^{(e)}_t+\int_0^t\underset{(-1,1)}{\int}v\widetilde{N}^{(e)}(\ud s,\ud v)+\int_0^t\int_{\R\setminus (-1,1)}v N^{(e)}(\ud s,\ud v),
 \end{equation}
where 
\[
\mathbf{m}=\alpha-\psi^\prime(0+)-\frac{\sigma^2}{2}-\underset{(-1,1)}{\int}(e^v-1-v)\pi(\ud v).
\]
 It is important to note that when $\psi\equiv 0$, conditionally on the environment $K$, the process $Z$ satisfies the branching property. The proof {of this fact  is the same as in the Brownian environment case. (see Theorem 1 in  \cite{PP}). }

\begin{proposition}\label{Prop1}
Suppose that $(\mathbf{H})$ holds.  Then for every $z,\lambda,t>0$, a.s.
\begin{equation}\label{bplaplacek}
\begin{split}
\Expconx{z}{\Expo{-\lambda Z_t e^{-K_t}}}{K}=\Expo{-zv_t(0,\lambda,K)-\int_0^t\phi\Big(v_t(r,\lambda,K )e^{-K_r}\Big)\ud r},
\end{split}
\end{equation}
 where for every $t,\lambda\geq 0$, the function  $(v_t(s,\lambda, K), s\leq t)$ is the a.s. unique  solution of the backward differential equation
 \begin{align}\label{bpbackward}
 \frac{\partial}{\partial s}v_t(s,\lambda, K)=e^{K_s}\psi_0(v_t(s,\lambda, K)e^{-K_s}),\qquad v_t(t,\lambda, K)=\lambda,
 \end{align}
and
 $$\psi_0(\lambda)=\psi(\lambda)-\lambda\psi'(0)=\gamma^2
\lambda^2+\int_{(0,\infty)}\big(e^{-\lambda x}-1+\lambda x\big)\mu(\ud x),\qquad \lambda\geq 0.$$
 \end{proposition}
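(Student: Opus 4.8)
The plan is to condition on the environment $K$ (equivalently on $S$ and $B^{(e)}$, $N^{(e)}$), work on the event of a fixed environmental path, and identify the conditional Laplace functional by a martingale/It\^o argument analogous to the Brownian case in \cite{PP}. First I would introduce the reweighted process $\widetilde Z_t := Z_t e^{-K_t}$. Using It\^o's formula for semimartingales with jumps applied to the product $Z_t e^{-K_t}$, together with the SDE \eqref{csbplre} for $Z$ and the definition \eqref{env} of $S$ and \eqref{auxiliary} of $K$, one checks that (under $q=0$, $\beta\equiv 0$, and $(\mathbf H)$, so that $\psi'(0+)$ is finite and the compensator manipulations are legitimate) the drift coming from the environment in $Z$ is exactly cancelled, and $\widetilde Z$ solves, conditionally on $K$, a CBI-type equation with time-inhomogeneous branching mechanism $s\mapsto e^{K_s}\psi_0(\cdot\, e^{-K_s})$ and immigration mechanism $s\mapsto\phi(\cdot\, e^{-K_s})$. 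The precise bookkeeping of the $\mathbf 1_{\{z<1\}}$ and $\mathbf 1_{\{|z|<1\}}$ truncations, and the role of the constant $\mathbf m$ in \eqref{auxiliary}, is what forces the appearance of $\psi_0$ rather than $\psi$: the linear term $-\lambda\psi'(0+)$ is absorbed into the environment.

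Next I would fix $t$ and $\lambda$ and consider, for $s\in[0,t]$, the process
\[
M_s := \Expo{-\widetilde Z_s\, v_t(s,\lambda,K) e^{K_s}-\int_0^s \phi\big(v_t(r,\lambda,K)e^{-K_r}\big)\ud r},
\]
where $v_t(\cdot,\lambda,K)$ solves the backward ODE \eqref{bpbackward}. Working conditionally on $K$ and applying It\^o's formula to $M_s$ using the conditional SDE for $\widetilde Z$, the local-martingale terms are the stochastic integrals against $B^{(b)}$ and $\widetilde N^{(b)}$, while the finite-variation part vanishes precisely because $v_t$ satisfies \eqref{bpbackward}: the ODE is chosen so that the generator applied to $x\mapsto \exp\{-x v_t(s,\lambda,K)e^{K_s}\}$ matches $-\partial_s$ of the same exponential minus the immigration correction. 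One then argues that $M$ is a genuine martingale on $[0,t]$ (here I would use a localization/boundedness argument: $v_t$ is nonnegative and bounded on $[0,t]$ for a.e. environment path, the exponential is bounded by $1$, and the compensated jump integrals have integrable quadratic variation after localizing, then pass to the limit by dominated convergence — this is the standard CBI argument). Evaluating $\e[M_t\mid K]=\e[M_0\mid K]$ and using $v_t(t,\lambda,K)=\lambda$ and $\widetilde Z_0=Z_0=z$, $K_0=0$ gives \eqref{bplaplacek} with $v_t(0,\lambda,K)$ in the exponent.

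The remaining point is existence, uniqueness and regularity of the solution $v_t(s,\lambda,K)$ to the backward ODE \eqref{bpbackward} for a.e. fixed environment path. Since $\psi_0$ is the branching mechanism of a (conservative, because $q=0$) CB-process, $\psi_0$ is locally Lipschitz on $[0,\infty)$ (indeed $C^1$, with $\psi_0'$ locally bounded), and $s\mapsto e^{K_s}$ is c\`adl\`ag hence locally bounded on $[0,t]$; so the map $(s,y)\mapsto e^{K_s}\psi_0(ye^{-K_s})$ is locally Lipschitz in $y$ uniformly on compacts, which gives local existence and uniqueness by Picard iteration along the (finitely many, on $[0,t]$, above any level) jump times of $K$, and nonnegativity of $v_t$ follows from $\psi_0(0)=0$ together with a comparison argument. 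Global existence on all of $[0,t]$ follows because $\psi_0(y)\ge -Cy$ for a constant $C$ (again using $(\mathbf H)$ to control $\psi'(0+)$), preventing blow-up. I expect the main obstacle to be the justification that $M$ is a true martingale and the careful treatment of the infinite-activity small jumps of $N^{(b)}$ and $N^{(e)}$ in the It\^o computation — i.e., checking the compensator cancellations are valid and interchanging limits — rather than anything conceptually new; everything else parallels \cite[Theorem 1]{PP} and \cite{Bapa}.
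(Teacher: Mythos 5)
Your overall strategy is exactly the paper's: pass to $\widetilde Z_s = Z_se^{-K_s}$, exhibit a conditional martingale built from the solution of the backward ODE \eqref{bpbackward}, and read off \eqref{bplaplacek} by equating expectations at $s=0$ and $s=t$. However, the martingale you write down is not the right one. The paper's martingale is
\[
F(s,\widetilde Z_s)=\Expo{-\widetilde Z_s\,v_t(s,\lambda,K)-\int_s^t\phi\big(v_t(r,\lambda,K)e^{-K_r}\big)\ud r},
\]
whose terminal value is $e^{-\lambda \widetilde Z_t}=e^{-\lambda Z_te^{-K_t}}$. Your $M_s$ has exponent $-\widetilde Z_s\,v_t(s,\lambda,K)e^{K_s}=-Z_s\,v_t(s,\lambda,K)$, so $M_t$ equals $e^{-\lambda Z_t}$ times a $K$-measurable factor: it targets the conditional Laplace transform of $Z_t$, not of $Z_te^{-K_t}$, and its finite-variation part does not vanish under \eqref{bpbackward} (the conditional generator of $\widetilde Z$ applied to $x\mapsto e^{-xv}$ produces the $e^{K_s}\psi_0(v e^{-K_s})$ terms only after the $e^{-K_s}$ reweighting, and the immigration contribution $-\phi(v_t(s,\lambda,K)e^{-K_s})$ is cancelled by $\partial_s$ of $-\int_s^t\phi$, not of $-\int_0^s\phi$). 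This is presumably a slip, but as written the central identity fails.

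The second, more substantive gap concerns existence and a.s.\ uniqueness of $v_t(\cdot,\lambda,K)$, which the paper singles out as the main difficulty and proves in Lemma \ref{existencev} by approximating $K$ through its L\'evy--It\^o decomposition by processes $K^{(n)}$ with finitely many jumps, solving the ODE for each $n$ by Cauchy--Lipschitz, and showing the solutions form a uniform Cauchy sequence via Gronwall. Your ``Picard iteration along the (finitely many above any level) jump times of $K$'' presupposes a piecewise-continuous path; when $\pi$ has infinite mass near the origin, $K$ has infinitely many jumps on $[0,t]$ and there is no finite set of discontinuities to iterate along. The idea can be saved --- run Picard directly on the integral equation $v(s)=\lambda-\int_s^t e^{K_r}\psi_0(v(r)e^{-K_r})\,\ud r$, using only that $r\mapsto e^{\pm K_r}$ is measurable and bounded on $[0,t]$ and that $\psi_0$ is locally Lipschitz (a Carath\'eodory-type argument), which would in fact be shorter than the paper's approximation scheme --- but it must be phrased that way. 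Finally, the a priori bound $0\le v_t(s,\lambda,K)\le\lambda$ is immediate from $\psi_0\ge 0$ (so $v_t(\cdot,\lambda,K)$ is non-decreasing in $s$); no lower bound of the form $\psi_0(y)\ge -Cy$ is needed to rule out blow-up on $[0,t]$.
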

 
  \begin{proof} The first part of the proof follows  similar arguments as those used in Bansaye et al. \cite{Bapa}. The main problem in proving our result is finding  the a.s. unique  solution of the backward differential equation (\ref{bpbackward}) in the general case. In order to do so, we  need an approximation technique based on the L\'evy-It\^o decomposition of the L\'evy process $K$.  The proof of the latter can be found in the Appendix in Lemma \ref{existencev}.
  
  For sake of completeness, we remain the main steps of the proof which are similar as those used in \cite{Bapa}. We first define $\widetilde{Z}_{t}=Z_{t}e^{-K_{t}}$, for $t\ge 0$, and choose 
  \[
  F(s,x)=\Expo{-xv_t(s,\lambda,K )-\int_s^t\phi(v_t(r,\lambda,K )e^{-K_r})\ud r}, \qquad s,x\geq 0,
  \]
   where $v_t(s,\lambda,K)$ is differentiable with respect to the variable $s$, non-negative and such that  $v_t(t,\lambda, K)=\lambda$ for all $\lambda\geq 0$. We observe,  conditioned on $K$,  that  $(F(s,\widetilde{Z}_s),s\leq t)$ is a martingale (using It\^o's formula) if and only if 
   \[
   \begin{split}
   \frac{\partial}{\partial s}v_t(s,\lambda, K)=&\gamma^2v_t(s,\lambda, K)^2e^{-K_s}\\
&+ e^{K_s}\int_{0}^{\infty}\left(e^{-e^{-K_s} v_t(s,\lambda, K)z}-1+e^{-K_s}v_t(s,\lambda, K)z\right)\mu(\ud z),
   \end{split}
   \]
   which is equivalent that $v_t(s,\lambda, K)$ solves (\ref{bpbackward}). Providing that $v_t(s,\lambda, K)$ exist a.s., we see that   the process 
    $\left(\Expo{-\widetilde{Z}_{s}v_{t}(s,\lambda, K)}, 0\leq s\leq t\right)$ conditioned on $K$ is a martingale, and hence 
$$\Expconx{z}{\Expo{-\lambda \widetilde{Z_t}}}{K}=\Expo{-zv_t(0,\lambda,K)-\int_0^t\phi(v_t(r,\lambda,K )e^{-K_r})\ud r}.$$
   \end{proof}
   
{
It is important to note that if   $|\psi^\prime(0+)|=\infty$,  the  auxiliary process can be taken as 
  	\begin{equation*}
 {K}^{(0)}_t=\mathbf{n} t+\sigma B^{(e)}_t+\int_0^t\int_{(-1,1)}v\widetilde{N}^{(e)}(\ud s,\ud v)+\int_0^t\int_{\R\setminus (-1,1)}v N^{(e)}(\ud s,\ud v),
  	\end{equation*}
  	where $\mathbf{n}=\alpha-\frac{\sigma^2}{2}-\underset{(-1,1)}{\int}(e^v-1-v)\pi(\ud v).$
  	Following  the same arguments as in the previous proposition and  replacing $K$ with ${K}^{(0)}$, one can deduce that $v_t(s,\lambda, {K}^{(0)})$ is the unique solution to  the backward differential equation
  	\begin{equation}\label{bpbackwardtil}
  	\frac{\partial}{\partial s}v_t(s,\lambda, {K}^{(0)})=e^{{K}^{(0)}_s}\psi(v_t(s,\lambda,{K}^{(0)})e^{-{K}^{(0)}_s}),
  	\end{equation}
  	whenever this equation has a solution.}
   
  {The latter observation allows us to compute explicitly the Laplace exponent of  the so called Neveu case. Recall that the  Neveu branching process in a L\'evy random environment has branching mechanism given by 
   $$\psi(u)=u\log(u)=cu+\int_{(0,\infty)}\big(e^{-u x}-1+u x{\mathbf 1}_{\{x<1\}}\big)x^{-2}\ud x,\qquad u>0, $$ where $c\in\R$ is a suitable constant. In this particular case, the backward differential  equation (\ref{bpbackwardtil}) satisfies
   $$\frac{\partial}{\partial s}v_t(s,\lambda, {K}^{(0)})=v_t(s,\lambda, \delta)\log(e^{-{K}^{(0)}_s} v_t(s,\lambda, {K}^{(0)}).$$}
  { The above equation has a solution and is given by 
   $$v_t(s, \lambda, {K}^{(0)})=\Expo{e^{s}\left(\int_s^t e^{-u}{K}^{(0)}_u \ud u+\log(\lambda)e^{-t}\right)}.$$}
{Hence,  for all $z,\lambda,t>0$, we deduce
   \begin{align}\label{bplaplacekneveu}
   \e_z\Big[\exp\Big\{-\lambda Z_t e^{-{K}^{(0)}_t}\Big\}\Big| {K}^{(0)}\Big]=\Expo{-z\lambda^{e^{-t}}\Expo{\int_0^t e^{-s}{K}^{(0)}_s \ud s}}\qquad \textrm{a.s.}
   \end{align}
 By taking limits as $\lambda$ goes to $\infty$ in the previous expression and then taking expectation, we obtain $\mathbb{P}_z(Z_t>0)=1$, for $t\geq 0.$}
 
{By integration by parts, we deduce
 \[
 \int_0^t e^{-s}{K}^{(0)}_s \ud s=-e^{-t}{K}^{(0)}_t + \int_0^t e^{-s}\ud {K}^{(0)}_s.
 \]
Under the assumption $\mathbb{E}[|{K}^{(0)}_1|]<\infty$, we deduce from Theorem 17.5 in Sato \cite{sa}, that the r.v. $ \int_0^\infty e^{-s}{K}^{(0)}_s \ud s=\int_0^{\infty} e^{-s}\ud {K}^{(0)}_s$   is self-decomposable   with characteristic exponent given
  by $\psi(\lambda)=\int_0^{\infty} \psi_{{K}^{(0)}}(\lambda e^{-s})\ud s,$ where $\psi_{{K}^{(0)}}$ denotes the characteristic exponent of ${K}^{(0)}$. Therefore, if we  take limits as $t\uparrow \infty$ in (\ref{bplaplacekneveu}), we observe that for $z,\lambda>0$
 \[
 \e_z\Big[\exp\Big\{-\lambda  \lim_{t\to\infty} Z_t e^{-{K}^{(0)}_t}\Big\}\Big | {K}^{(0)}\Big] =\Expo{-z\Expo{\int_0^\infty e^{-s} \ud {K}^{(0)}_s}}.
 \]
 Since the right-hand side of the above identity does not depend on $\lambda$, this implies that 
 \[
 \p_z\Big(\lim_{t\to\infty} Z_t e^{-{K}^{(0)}_t}=0\Big)=\e\left[\Expo{-z\Expo{\int_0^\infty e^{-s} \ud {K}^{(0)}_s}}\right].
 \]
 In conclusion, the Neveu process in L\'evy environment survives a.s. but when the L\'evy process ${K}^{(0)}$ does not drift to  $\infty$, the extinction probability is given by the previous expression.
}
 
{Another interesting example is  the self-similar CB-processes in a L\'evy random environment.  In this example the  branching mechanism is taken as follows
$$\psi(\lambda)=c \lambda^{\alpha}, \qquad \lambda \geq 0,$$
for some $\alpha\in(0,1)\cup(1,2]$  and $c_\alpha\in   \R$ such that $c_\alpha(\alpha-1)>0$. If $\alpha=2$, we observe that   $\mu(0,\infty)=0$ and $c=\gamma^2$.  In the case  $\alpha\in (0,1)\cup(1,2)$, the process $Z$ satisfies the SDE
\begin{align*}
Z_t =& Z_0 +\int_0^t  Z_{s-}\ud S_s+\int_0^t\int_0^{\infty}\int_0^{Z_{s-}}z\hat{N}(\ud s,\ud z,\ud u), 
\end{align*}
 where $N$ is an independent Poisson random measure with intensity 
\[
\frac{c_\alpha \alpha(\alpha-1)}{\Gamma(2-\alpha)}\frac{1}{z^{1+ \alpha}}\ud s \ud z\ud u,
\]
$\widetilde{N}$ is its  compensated version and $\hat{N}=N\Ind{\alpha\in(0,1)}+\widetilde{N}\Ind{\alpha\in(1,2)}$.
Note  that
\[
\psi^\prime(0+) =\left\{ \begin{array}{ll}
-\infty  &\textrm{ if $\alpha \in(0,1),$}\\
0 & \textrm{ if $\alpha \in (1,2].$}
\end{array} \right .
\]
Hence, when $\alpha\in(1,2]$, we have  $K_t={K}^{(0)}_t$, for $t\ge 0$. We use the backward differential  equation (\ref{bpbackwardtil}) and observe that it satisfies
$$\frac{\partial}{\partial s}v_t(s,\lambda, {K}^{(0)})=c_\alpha  v^{\alpha}_t(s,\lambda, {K}^{(0)}) e^{-(\alpha-1) {K}^{(0)}_s}.$$
Assuming that  $v_t(t,\lambda, K^{(0)})=\lambda$, we can solve the above equation  and after some straightforward computations, we get
$$v_t(s, \lambda,K^{(0)})=\left(\lambda^{1-\alpha}+(\alpha-1) c_\alpha \int_s^t e^{-(\alpha-1) {K}_u^{(0)}} \ud u\right)^{-1/(\alpha-1)}.$$
Hence, from (\ref{bplaplacek}) the following  identity holds a.s.
\[
\e_z\Big[\exp\Big\{-\lambda Z_t e^{-K_t^{(0)}}\Big\}\Big| K^{(0)}\Big]=\Expo{-z\left(\lambda^{1-\alpha}+(\alpha-1) c_\alpha\int_0^t e^{-(\alpha-1) K_u^{(0)}} \ud u\right)^{-1/(\alpha-1)}}.
\]
Observe  that  the probabilities of survival and  non-explosion can be determined explicitly in terms of the exponential functional  $\int_0^t e^{-(\alpha-1) K^{(0)}_u} \ud u$,  by taking $\lambda$ goes to $\infty$ or $\lambda$ goes to $0$, respectively. In other words, for all $z>0$
\begin{align*}
\mathbb{P}_z\Big( Z_t >0\Big|K^{(0)}\Big)=1-\mathbf{1}_{\{\alpha>1\}}
\exp\left\{-z\left((\alpha-1) c_\alpha \int_0^t e^{-(\alpha-1)K^{(0)}_u} \ud u\right)^{-1/(\alpha-1)}\right\},\qquad \textrm{a.s.},
\end{align*}
and
\begin{align*}
\mathbb{P}_z\Big( Z_t < \infty\Big|K^{(0)}\Big)=
\mathbf{1}_{\{\alpha>1\}}+
\mathbf{1}_{\{\alpha<1\}}\exp\left\{-z\left((\alpha-1) c_\alpha\int_0^t e^{-(\alpha-1)K^{(0)}_u} \ud u\right)^{-1/(\alpha-1)}\right\},\quad \textrm{a.s.}
\end{align*}
The asymptotic behaviour of these probabilities has been computed recently by Palau et al. \cite{PPS}, using a fine study of the negative moments of exponential functional of L\'evy processes. In particular, and similarly to the results obtained by Bansaye et al. \cite{Bapa} and Palau and Pardo \cite{PP}, the authors in \cite{PPS} obtained five different regimes for the probability of survival when $\alpha\in(1,2]$ and three different regimes for the probability of non-explosion when $\alpha\in(0,1)$. Both depend on the characteristic of the L\'evy process $K^{(0)}$.}

\subsection{Long term behaviour of CB-processes in a L\'evy random environment.}
In the sequel,  we exclude from our model the competition mechanism  $\beta$ and the immigration term  $M^{(im)}$. In this section, we are interested in determining the long term behaviour of CB-processes in a L\'evy random environment (CBLRE for short). Our methodology follows  similar arguments as those  used in Proposition 2 in \cite{PP} and Corollary 2 in \cite{Bapa}.

Let $\Psi_K$ denotes the characteristic exponent of the L\'evy process $K$, i.e. 
\[
\Psi_K(\theta)=-\log\mathbb{E}[e^{i\theta K_1}]\qquad \qquad \textrm{for}\quad\theta\in \mathbb{R}.
\]
We also introduce the functions $\Phi(\lambda)=\lambda^{-1}\psi_0(\lambda)$, for $\lambda\ge 0$, and  
\[
A(x)=\mathbf{m}+\pi((1,\infty))+\int_{1}^x \pi((y,\infty))\ud y, \qquad \textrm{for}\quad x>0.
\]  

\begin{proposition} Assume that $(\mathbf{H})$ holds.  
Let $(Z_t, t\ge 0)$ be a CBLRE with  branching mechanism given by $\psi$, and $z>0$
\begin{enumerate}
\item[i)] If the process $K$ drifts to $-\infty$, then $\mathbb{P}_z\Big(\underset{t\rightarrow\infty}{\lim} Z_t=0\Big|K\Big)=1$, a.s.
\item[ii)] If the process $K$ oscillates, then $\mathbb{P}_z\left(\underset{t\rightarrow\infty}{\liminf} Z_t=0\Big| K\right)=1$, a.s. Moreover if $\gamma>0$  then 
\[
\mathbb{P}_z\Big(\underset{t\rightarrow\infty}{\lim}Z_t=0\Big|K\Big)=1,  \textrm{a.s.}
\]
\item[iii)] If  the process $K$ drifts to $+\infty$, so that $A(x)>0$ for all $x$ larger than some $a>0$. Then if
\begin{equation}\label{intcond}
\int_{(a,\infty)}\frac{x}{A(x)}\big|\ud \Phi( e^{-x})\big|<\infty,
\end{equation}
we have $\mathbb{P}_z\Big(\underset{t\rightarrow\infty}{\liminf}Z_t>0\Big|K\Big)>0 $ a.s.,  and there exists a non-negative finite r.v. $W$ such that
$$Z_te^{-K_t}\underset{t\rightarrow \infty}{\longrightarrow}W,\ \textrm{a.s} \qquad \textrm{and}\qquad \big\{W=0\big\}=\Big\{\lim_{t\rightarrow\infty}Z_t=0\Big\}.$$
In particular, if $0< \mathbb{E}[K_1]<\infty$ then the above integral condition is equivalent to
  \[
 \int^\infty x\ln x\, \mu({\rm d} x)<\infty.
\]  

\end{enumerate}
\end{proposition}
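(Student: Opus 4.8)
The plan is to combine the conditional Laplace-transform identity of Proposition~\ref{Prop1} with known fluctuation theory for the exponential functional $\int_0^t e^{-K_u^{(0)}}\,\ud u$ and the classical CSBP extinction dichotomy. Throughout we work conditionally on the environment $K$ and exploit that, under $(\mathbf H)$, the process $\widetilde Z_t=Z_te^{-K_t}$ has, by \eqref{bplaplacek} with $\phi\equiv 0$, conditional Laplace transform
\[
\Expconx{z}{\Expo{-\lambda \widetilde Z_t}}{K}=\Expo{-z v_t(0,\lambda,K)},
\]
with $v_t(s,\lambda,K)$ solving the backward equation \eqref{bpbackward}. The first step is to observe that $\widetilde Z$ is a non-negative conditional martingale (take $\lambda\downarrow 0$ in the ODE to see $v_t(0,\lambda,K)\sim \lambda e^{\int_0^t\cdots}$ linearly, or directly from the generator), hence converges a.s.\ to a finite limit $W:=\lim_{t\to\infty}\widetilde Z_t$; this gives the existence of $W$ needed in (iii). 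The event $\{\lim Z_t=0\}$ then clearly contains $\{W=0\}$, and the reverse inclusion on the event that $K$ does not drift to $-\infty$ follows because on $\{W>0\}$ we have $Z_t=\widetilde Z_t e^{K_t}$ and $e^{K_t}$ does not tend to $0$.

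For the explicit control of $W$ one solves \eqref{bpbackward} as in \cite{Bapa}: writing $v_t(0,\lambda,K)^{-1}$ and using $\Phi(\lambda)=\lambda^{-1}\psi_0(\lambda)$, one gets an integral representation of $v_t(0,\lambda,K)$ in terms of $\int_0^t \Phi\big(v_t(s,\lambda,K)e^{-K_s}\big)\,\ud s$, from which, letting $\lambda\to\infty$, the probability of extinction by time $t$ is governed by the exponential functional $I_t:=\int_0^t e^{-K_s}\,\ud s$ (more precisely by $\int_0^\infty \Phi(e^{-K_s})\,\ud s$ when $\Phi$ is bounded, and by a monotone-limit argument in general since $\Phi$ is increasing with $\Phi(0+)=0$). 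Then:
\begin{itemize}
\item[(i)] If $K\to-\infty$, then $e^{-K_s}\to\infty$ and since $\psi_0$ is the branching mechanism of a (sub)critical-type CSBP with $\psi_0(\infty)=\infty$, the time-changed CSBP gets absorbed: concretely $\int_0^\infty \Phi(e^{-K_s})\,\ud s=\infty$ forces $v_t(0,\infty,K)\to\infty$ as $t\to\infty$, i.e.\ $\Prox{z}{\lim Z_t=0\mid K}=1$.
\item[(ii)] If $K$ oscillates, one uses that $\liminf_{t}e^{-K_t}=\infty$ along a subsequence of times where $K$ is very negative; running the CSBP clock over those excursions drives $\widetilde Z$ arbitrarily close to $0$, giving $\liminf \widetilde Z_t=0$ hence $\liminf Z_t=0$ a.s.\ (here one also uses the conditional branching/Markov property to iterate). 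When $\gamma>0$ the diffusive part alone yields absorption: the accumulated ``local time at $0$'' of the time-changed Feller diffusion is infinite because $\int_0^\infty e^{-K_s}\,\ud s=\infty$ on $\{K \text{ oscillates}\}$, forcing $W=0$ and hence $\lim Z_t=0$.
\item[(iii)] If $K\to+\infty$, then $e^{-K_s}\to 0$, $\Phi(e^{-K_s})\to0$, and the key point is that $\int_0^\infty \Phi(e^{-K_s})\,\ud s<\infty$ a.s.\ precisely under the integral test \eqref{intcond}. This is exactly where one invokes the integrability criterion for perpetual integrals of the form $\int_0^\infty f(K_s)\,\ud s$ for transient $K$ (Erickson–Maller / Döring–Kyprianou type results): such an integral is a.s.\ finite iff $\int^\infty \frac{f^{-1}(\cdots)}{A(x)}\,\ud x<\infty$ where $A$ is the renewal-type function built from the upward drift of $K$, which after the change of variable $x\mapsto e^{-x}$ becomes \eqref{intcond}. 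Finiteness of $\int_0^\infty\Phi(e^{-K_s})\,\ud s$ then makes $v_t(0,\infty,K)$ bounded, so $\Prox{z}{\lim Z_t=0\mid K}=\Prox{z}{W=0\mid K}=\e[e^{-zW'}]<1$, i.e.\ $\Prox{z}{\liminf Z_t>0\mid K}>0$.
\end{itemize}
Finally, when $0<\e[K_1]<\infty$, $A(x)$ grows linearly so $\frac{x}{A(x)}$ is bounded above and below by constants, and $\big|\ud\Phi(e^{-x})\big|$ unwinds (using $\psi_0'(0+)=0$ and the definition of $\psi_0$) to $\int^\infty x\ln x\,\mu(\ud x)<\infty$ after a routine Fubini computation on the L\'evy measure $\mu$; this is the stated equivalence.

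The main obstacle I expect is step (iii): rigorously identifying the finiteness of the perpetual integral $\int_0^\infty \Phi(e^{-K_s})\,\ud s$ with the integral test \eqref{intcond}. This requires (a) passing from the Laplace-transform/ODE representation to a clean statement that extinction asymptotics are controlled by this perpetual integral, uniformly enough to take $\lambda\to\infty$; and (b) citing and correctly instantiating the Erickson–Maller-type convergence criterion for $\int_0^\infty f(K_s)\,\ud s$ with $f(x)=\Phi(e^{-x})$ decreasing to $0$, for a L\'evy process drifting to $+\infty$, matching its renewal function to the explicit $A(x)$ given here. The oscillating case (ii) with $\gamma>0$ is also slightly delicate because one must argue that the Brownian branching part alone, time-changed by $\int_0^\cdot e^{-K_s}\,\ud s$ which diverges, hits $0$; this is handled by the explicit solution of the Riccati equation $\partial_s v=\gamma^2 v^2 e^{-K_s}$ when $\mu\equiv0$ and by a comparison $\psi_0(\lambda)\ge\gamma^2\lambda^2$ in general.
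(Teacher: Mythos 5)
Your part (iii) follows essentially the same route as the paper: monotonicity of $\Phi$ gives $\partial_s v_t(s,\lambda,K)\le v_t(s,\lambda,K)\Phi(\lambda e^{-K_s})$, hence the lower bound $v_t(0,\lambda,K)\ge\lambda\exp\{-\int_0^t\Phi(\lambda e^{-K_s})\,\ud s\}$, and the a.s.\ finiteness of the perpetual integral $\int_0^\infty\Phi(\lambda e^{-K_s})\,\ud s$ is exactly what Theorem 1 of Erickson and Maller delivers under \eqref{intcond} (after splitting off the a.s.\ finite contribution up to $\varsigma=\sup\{t:K_t\le0\}$). That skeleton is correct. However, three points are genuinely wrong or gapped. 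First, in (i) you claim that $\int_0^\infty\Phi(e^{-K_s})\,\ud s=\infty$ ``forces $v_t(0,\infty,K)\to\infty$, i.e.\ $\mathbb{P}_z(\lim Z_t=0\mid K)=1$''; this is backwards, since $\mathbb{P}_z(\widetilde Z_t=0\mid K)=e^{-zv_t(0,\infty,K)}$, so $v_t(0,\infty,K)\to\infty$ would send the absorption probability to $0$, not to $1$. The correct (and much simpler) argument, which is the one the paper imports from \cite{PP}, is that $\widetilde Z_t=Z_te^{-K_t}$ is a non-negative conditional martingale converging a.s.\ to a finite $W$, whence $Z_t=\widetilde Z_te^{K_t}\to W\cdot 0=0$ when $K\to-\infty$, and $\liminf_tZ_t=0$ along a subsequence with $K_{t_n}\to-\infty$ in the oscillating case. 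Second, you declare the inclusion $\{W=0\}\subseteq\{\lim_tZ_t=0\}$ ``clear'' and only prove the converse; when $K\to+\infty$ the converse is the easy direction, and the inclusion you call clear is precisely the non-trivial dichotomy (a priori $Z_t$ could stay bounded away from $0$ while growing strictly slower than $e^{K_t}$), which the paper gets from Lemma 20 of \cite{Bapa} together with the branching property.

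Third, in the final equivalence you assert that when $0<\e[K_1]<\infty$ ``$A(x)$ grows linearly so $x/A(x)$ is bounded.'' In fact $A(x)=\mathbf m+\pi((1,\infty))+\int_1^x\pi((y,\infty))\,\ud y$ converges to a finite positive limit when $\e[K_1^+]<\infty$, so it is $x/A(x)$ that grows linearly. Condition \eqref{intcond} then reduces to $\int^\infty x\,\big|\ud\Phi(e^{-x})\big|<\infty$, equivalently $\int_0^\infty\Phi(\lambda e^{-y})\,\ud y<\infty$, and only after that does the Fubini computation with $g_\lambda(x)=\int_0^{\lambda x}(e^{-y}-1+y)y^{-2}\,\ud y\sim\ln x$ (as $x\to\infty$) yield $\int^\infty x\ln x\,\mu(\ud x)<\infty$. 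With your reading ($x/A(x)$ bounded) the integral test would degenerate to the trivially true statement that $\Phi(e^{-\cdot})$ has finite total variation, and the stated equivalence would fail.
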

\begin{proof} Parts (i) and (ii) follow from the  same arguments used in the proof of Proposition 2 in \cite{PP}, so we skip their proofs.

Now, we prove part (iii).  We first observe that  ${v}_t(\cdot,\lambda, K)$, the a.s. solution to the backward differential equation (\ref{bpbackward}), is non-decreasing on $[0,t]$ (since $\psi_0$ is positive). Thus for all $s\in[0,t]$, ${v}_t(s,\lambda,K)\leq \lambda$.  Observe that  $\Phi(0)=\psi'_0(0+)=0$ and since $\psi_0$ is convex, we deduce that $\Phi$ is increasing.   Hence
\begin{align*}
\frac{\partial}{\partial s}{v}_t(s,\lambda, K)&={v}_t(s,\lambda,K)\Phi({v}_t(s,\lambda, K)e^{-K_s} )\leq {v}_t(s,\lambda,K)\Phi(\lambda e^{-K_s}).
\end{align*}
Therefore, for every $s\le t$, we have
\[
v_t(s, \lambda, K)\ge \lambda \exp\left\{-\int_{s}^t\Phi(\lambda e^{-K_s}){\rm d} s \right\}.
\]
In particular, 
\[
\lim_{t\to\infty} v_t(0, \lambda, K)\ge \lambda \exp\left\{-\int_{0}^\infty\Phi(\lambda e^{-K_s}){\rm d} s \right\}.
\]
If the integral on the right-hand side is a.s. finite, then
\[
\lim_{t\to\infty} v_t(0, \lambda, K)\ge \lambda \exp\left\{-\int_{0}^\infty\Phi(\lambda e^{-K_s}){\rm d} s \right\}>0, \qquad \textrm{a.s.,}
\]
 implying 
 \[
 \mathbb{E}_z\Big[e^{-\lambda W}\Big| K\Big]\le \exp\left\{-z\  \lambda \exp\left\{-\int_{0}^\infty\Phi(\lambda e^{-K_s}){\rm d} s \right\}\right\}<1,\qquad \textrm{a.s.}
 \]
 and in particular $\mathbb{P}_z\Big(\underset{t\rightarrow\infty}{\liminf}Z_t>0\Big|K\Big)>0 $ a.s. Next, we use Lemma 20 in \cite{Bapa} and the branching property of $Z$, to deduce
$$\{W=0\}=\Big\{\underset{t\rightarrow\infty}{\lim}Z_t=0\Big\}.$$
In order to finish our proof, we show that the integral condition  (\ref{intcond}) implies 
\[
\int_{0}^\infty\Phi(\lambda e^{-K_s}){\rm d} s <\infty \quad \textrm{a.s.}
 \]
 We first introduce $\varsigma=\sup\{t\ge 0: K_t\le 0\}$  and observe
 \begin{equation*}\label{intfin}
 \int_{0}^\infty\Phi( \lambda e^{-K_s}){\rm d} s=\int_{0}^\varsigma\Phi(\lambda e^{-K_s}){\rm d} s +\int_{\varsigma}^\infty\Phi(\lambda e^{-K_s}){\rm d} s
 \end{equation*}
Since $\varsigma<\infty$ a.s., the first integral of the right-hand side is a.s. finite. For the second integral, we use Theorem 1 in Erickson and Maller \cite{EM} which assures us that 
\[
\int_{\varsigma}^\infty\Phi(\lambda e^{-K_s}){\rm d} s<\infty,\qquad \textrm{a.s.},
\]
 if the integral condition  (\ref{intcond})  holds.

Finally, we assume that $0\le \mathbb{E}[K_1]<\infty$ and observe that $\lim_{x\to \infty} A(x)$ is finite. In particular, this implies that the  integral condition  (\ref{intcond}) is equivalent to 
\[
\int_{0}^\infty\Phi(\lambda e^{-y}){\rm d} y<\infty.
\]
On the other hand, we have 
\[
\begin{split}
\int_0^\infty\Phi(\lambda e^{-y}){\rm d} y &=\int_{0}^\lambda\frac{\Phi(\theta)}{\theta}{\rm d} \theta\\
&=\gamma^2\lambda +\int_0^\lambda \frac{{\rm d} \theta}{\theta^2}\int_{(0,\infty)} (e^{-\theta x}-1+\theta x)\mu ({\rm d}x)\\
&=\gamma^2\lambda +\int_{(0,\infty)}\mu ({\rm d}x)\int_0^\lambda (e^{-\theta x}-1+\theta x)\frac{{\rm d} \theta}{\theta^2} \\
&=\gamma^2\lambda +\int_{(0,\infty)} x\left(\int_0^{\lambda  x}(e^{-y}-1+y)\frac{{\rm d} y}{y^2} \right )\mu ({\rm d}x).
\end{split}
\]
Since the function
\[
g_\lambda(x)=\int_0^{\lambda  x}(e^{-y}-1+y)\frac{{\rm d} y}{y^2},
\]
is equivalent to $\lambda x/2$ as $x\to 0$ and equivalent to  $\ln x$ as $x\to \infty$, we deduce that 
\[
\int_{0}^\infty\Phi(\lambda e^{-y}){\rm d} y<\infty \qquad  \textrm{if and only if } \qquad \int^\infty x\ln x \mu({\rm d}x )<\infty.
\]
\end{proof}
Now, we  derive a central limit theorem in the supercritical regime which follows from Theorem 3.5 in Doney and Maller \cite{DM} and similar arguments as those used in Corollary 3 in \cite{Bapa}, so we skip its proof.

For $x>0$, let
\[
T(x)=\pi((x,\infty))+\pi((-\infty, -x))\qquad \textrm{and}\qquad U(x)=\sigma^2+\int_0^x yT(y)\ud y
\]
\begin{corollary}\label{TCL}
Assume that $K$ drifts to $+\infty$, $T(x)>0$ for all $x>0$, and (\ref{intcond}) is satisfied. There are two measurable functions $a(t), b(t)> 0$ such that
, conditionally on $\{ W>0 \}$, 
$$\frac{\log (Z_t) -a(t)}{ b(t)} \xrightarrow[t\to \infty]{d} {\mathcal{N}}(0,1),$$
if and only if 
\[
\frac{U(x)}{x^2T(x)}\to \infty \qquad \textrm{as }\quad x\to \infty, 
\]
where $\xrightarrow{d}$ means convergence in distribution and ${\mathcal{N}}(0,1)$ denotes a centered Gaussian random variable with variance equals 1. 
\end{corollary}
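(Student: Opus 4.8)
The plan is to reduce the corollary to a statement about the exponential functional $\int_0^\infty e^{-(\alpha-1)K_s}\,\mathrm{d}s$ — or more precisely, about the limiting random variable $W$ governing the growth of $Z$ — and then apply the central limit theorem of Doney and Maller \cite{DM} for L\'evy processes conditioned to drift to $+\infty$. First I would recall from Proposition~3 that, under $(\mathbf{H})$ and the integral condition (\ref{intcond}), the quotient $Z_te^{-K_t}$ converges a.s.\ to a finite non-negative random variable $W$ with $\{W=0\}=\{\lim_t Z_t=0\}$, and that on the complementary event the branching property lets us write, on $\{W>0\}$,
\begin{equation*}
\log Z_t = K_t + \log\big(Z_t e^{-K_t}\big) = K_t + \log W + o(1), \qquad t\to\infty,
\end{equation*}
where the $o(1)$ term is a.s.\ bounded in the limit. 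The point is that $\log Z_t$ and $K_t$ differ by a random quantity that converges a.s.\ (hence is $o(b(t))$ for any $b(t)\to\infty$), so a central limit theorem for $\log Z_t$ after centering and scaling is equivalent to one for the L\'evy process $K_t$.

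Next I would invoke Theorem~3.5 of Doney and Maller \cite{DM}, which characterizes when a L\'evy process $K$ that drifts to $+\infty$ admits functions $a(t)$ and $b(t)>0$ with $(K_t-a(t))/b(t)$ converging in distribution to a standard normal: the criterion is precisely that the truncated-variance function dominates the tail, i.e.\ $U(x)/(x^2 T(x))\to\infty$ as $x\to\infty$, with $T$ and $U$ as defined just before the corollary (tail sum and truncated second moment of the L\'evy measure $\pi$ of $K$, which coincides with $\pi$ up to the deterministic transformation relating $S$ and $K$). Here I would need to check that the hypothesis "$T(x)>0$ for all $x>0$" guarantees $K$ is genuinely non-degenerate (not compound Poisson with lattice support, not purely deterministic), so that the Doney–Maller dichotomy applies cleanly. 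Combining this with the displayed asymptotic $\log Z_t = K_t + \log W + o(1)$, and using that conditioning on $\{W>0\}$ only affects the additive constant $\log W$ (which is absorbed into $a(t)$, or rather is asymptotically negligible against $b(t)\to\infty$ after an application of Slutsky's theorem), yields the stated equivalence with the same normalizing functions $a(t),b(t)$ (up to the additive shift by the a.s.\ limit of $\log(Z_te^{-K_t})$, which is immaterial).

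The one genuine subtlety — and the step I expect to require the most care — is the passage from the \emph{conditional} statement (given $K$) that $Z_te^{-K_t}\to W$ a.s.\ to the \emph{unconditional} mixing statement needed for convergence in distribution of $(\log Z_t - a(t))/b(t)$ on $\{W>0\}$. One must argue that the randomness of the environment $K$ and the residual branching fluctuation $\log W$ do not interact with the Gaussian limit: since $b(t)\to\infty$, for any fixed realization of the environment the term $\log(Z_te^{-K_t})$ is eventually within a bounded window, so divided by $b(t)$ it vanishes, and the convergence $(K_t-a(t))/b(t)\xrightarrow{d}\mathcal N(0,1)$ is a statement purely about the law of $K$; an application of Slutsky together with dominated convergence (integrating out the environment and conditioning on $\{W>0\}$, a set of positive probability by part (iii) of Proposition~3) closes the argument. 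The remaining bookkeeping — verifying that the L\'evy measure of $K$ has tails controlled exactly by $T$ and truncated variance by $U$, given the explicit form (\ref{auxiliary}) of $K$ in terms of $\pi$ — is routine, which is why, as in \cite{Bapa}, the detailed proof can be omitted.
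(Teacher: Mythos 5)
Your proposal is correct and follows essentially the same route the paper intends: the paper omits the proof precisely because it reduces, via the decomposition $\log Z_t = K_t + \log(Z_t e^{-K_t})$ with $\log(Z_te^{-K_t})\to\log W$ a.s.\ on $\{W>0\}$ and $b(t)\to\infty$, to the Doney--Maller Theorem~3.5 characterization of the CLT for $K$, exactly as in Corollary~3 of \cite{Bapa}. You also correctly flag the only delicate point (the mixing/Slutsky argument needed to condition on $\{W>0\}$), which is handled as in \cite{Bapa}.
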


It is important to note that  if $\int_{\{|x|>1\}} x^2\pi(\ud x)<\infty$, then for all $t>0$,
\[
a(t):=\left(\mathbf{m}+\int_{\{| x |\ge 1\}} x \pi(\ud {x})\right) t\qquad \textrm{and} \qquad b^2(t):=\left(\sigma^2+\int_{\mathbb{R}}x^2\pi(\ud {x})\right) t,
\]
 which is similar to the result obtained in Corollary 3 in \cite{Bapa}.

\subsection{Population model with competition in a L\'evy random environment}
We now study an extension of the competition model given in Evans et al. \cite{EHS}. In this model, we exclude the immigration term and take the branching and competition mechanisms as follows
\[
\beta(x)=kx^2\qquad \textrm{and }\qquad \psi(\lambda)=a\lambda \qquad \mbox{for } x,\lambda\geq 0,
\]
where $k$ is a positive constant. Hence, we define  a  branching process  in a L\'evy random environment process $(Z_t,t \geq 0)$   as the solution of the SDE
\begin{align}\label{evans}
 Z_t=Z_0+\int_0^t Z_s(a-kZ_s)\ud s+\int_0^t  Z_{s-}\ud S_s 
\end{align}
where the environment is given by the L\'evy process given in (\ref{env}).

From Corollary \ref{corollary1},  there is a unique non negative strong solution of (\ref{evans}) satisfying the  Markov property. Moreover, we have the following result that in particular says that the process $Z$ is the inverse of a generalised Ornstein-Uhlenbeck process.
\begin{proposition}
Suposse that $(Z_t,t\geq 0)$ is the unique strong solution of (\ref{evans}). Then, it satisfies
\begin{equation}\label{equationevans}
Z_t=\frac{Z_0e^{K_t}}{1+kZ_0\displaystyle\int_0^t e^{K_s}\ud s},\qquad t\ge 0, 
\end{equation}
where $K$ is the L\'evy process defined in (\ref{auxiliary}). Moreover, if $Z_0=z>0$ then, $Z_t>0$ for all $t\geq 0$ a.s. and it has the following asymptotic behaviour:
\begin{enumerate}
\item[i)] If the process $K$ drifts to $-\infty$, then $\lim_{t\rightarrow\infty}Z_t=0$ a.s.
\item[ii)] If the process $K$ oscillates, then $\liminf_{t\rightarrow\infty}Z_t=0$ a.s.
\item[iii)] If the process $K$ drifts to $\infty$, then $(Z_t, t\ge 0)$ has a stationary distribution whose density satisfies for $z>0$,
\[
\mathbb{P}_z(Z_\infty\in \ud x)=h\left(\frac{1}{kx}\right)\frac{\ud x}{x^2}, \qquad x> 0,
\]
where
\[
\int_t^\infty h(x)\ud x=\int_\mathbb{R} h(te^{-y})U(\ud y),\qquad \textrm{a.e. $t$ on }(0,\infty), 
\]
and $U$ denotes the potential measure associated to $K$, i.e.
\[
U(\ud x)=\int_0^\infty \mathbb{P}(K_s\in \ud x)\ud s,\qquad x\in\R.
\]
Moreover, if $0<\Exp{K_1}<\infty$, then
$$\lim_{t\rightarrow\infty}\frac{1}{t}\int_0^t Z_s\ud s=\frac{1}{k}\Exp{K_1}, \qquad \textrm{a.s.}$$
\end{enumerate}
\end{proposition}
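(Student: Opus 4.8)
The plan is to first establish the explicit formula \eqref{equationevans} by a direct It\^o-calculus verification, and then to read off the asymptotics from known properties of generalised Ornstein--Uhlenbeck processes. For the first step I would introduce the candidate process $Y_t = Z_0 e^{K_t}\big(1 + k Z_0\int_0^t e^{K_s}\ud s\big)^{-1}$, where $K$ is the L\'evy process in \eqref{auxiliary}, and apply It\^o's formula to the function $(x,y)\mapsto x/(1+kxy)$ evaluated along $(e^{K_t}, \int_0^t e^{K_s}\ud s)$. Writing $X_t = Z_0 e^{K_t}$ so that $\ud X_t = X_{t-}\ud\big(e^{K}\big)$-type increments appear, the key identity is that $e^{K_t}$ is itself the Dol\'eans--Dade exponential of the L\'evy process $S$ of \eqref{env}: precisely, by the choice of the drift $\mathbf{m}$ in \eqref{auxiliary} (which subtracts exactly $\psi'(0+)=-a$, plus the compensation terms $\sigma^2/2 + \int_{(-1,1)}(e^v-1-v)\pi(\ud v)$), one has $e^{K_t} = e^{at}\,\mathcal{E}(S)_t$, and hence $\ud(e^{K_t}) = e^{K_{t-}}(a\,\ud t + \ud S_t)$. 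Feeding this into the It\^o expansion of $Y_t$ and simplifying, the $\ud S_t$ term produces $Y_{t-}\ud S_t$, the drift produces $Y_t(a - kY_t)\ud t$, and all the quadratic-variation and jump-compensation contributions cancel against the correction built into $\mathbf{m}$; this shows $Y$ solves \eqref{evans}. Since Corollary~\ref{corollary1} gives pathwise uniqueness for \eqref{evans}, we conclude $Z_t = Y_t$ for all $t\ge 0$ a.s., and in particular $Z_t>0$ for all $t$ when $z>0$, because $e^{K_t}>0$ and the denominator is finite.

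For the asymptotic statements, the representation \eqref{equationevans} reduces everything to the behaviour of $K$ and of the perpetual integral $\int_0^\infty e^{K_s}\ud s$. For (i), if $K\to-\infty$ then $e^{K_t}\to 0$ while the denominator stays bounded below by $1$, so $Z_t\to 0$. For (ii), if $K$ oscillates then $\liminf_{t\to\infty} e^{K_t}=0$ a.s., and since the denominator is non-decreasing in $t$ and bounded below by $1$, we get $\liminf_{t\to\infty} Z_t = 0$. For (iii), when $K\to+\infty$ the exponential functional $I_\infty := \int_0^\infty e^{K_s}\ud s$ is finite a.s.; I would then use the standard time-reversal/stationarity argument for generalised OU processes: $Z_t$ has the same law as $e^{K_t}/(k^{-1}+Z_0^{-1} + k\int_0^t e^{K_s}\ud s)$ rearranged so that, upon reversing the environment on $[0,t]$, the law of $1/(kZ_t)$ converges to that of $\int_0^\infty e^{-\hat K_s}\ud s$ where $\hat K$ is the dual L\'evy process; the density identity $\int_t^\infty h(x)\ud x = \int_\mathbb{R} h(te^{-y})U(\ud y)$ is exactly the integral equation characterising the law of this exponential functional in terms of the potential measure $U$ of $K$ (this is the Carmona--Petit--Yor type equation). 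The change of variables $x\mapsto 1/(kx)$ then transfers this to the stated density of $Z_\infty$. Finally, for the ergodic averaging statement, from \eqref{equationevans} one computes $\frac{\ud}{\ud t}\log\big(1 + kZ_0\int_0^t e^{K_s}\ud s\big) = kZ_t$, so $\frac{1}{t}\int_0^t Z_s\ud s = \frac{1}{kt}\log\big(1+kZ_0\int_0^t e^{K_s}\ud s\big)$; since $\frac1t\log\int_0^t e^{K_s}\ud s \to \mathbb{E}[K_1]$ a.s. when $0<\mathbb{E}[K_1]<\infty$ (by the strong law for $K$ together with a Laplace-type estimate on the integral), the limit $\frac1k\mathbb{E}[K_1]$ follows.

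The main obstacle I anticipate is the It\^o-calculus verification in the first step: one must be scrupulous about the jump terms of $K$, keeping the distinction between $e^{K_{s-}}$ and $e^{K_s}$, splitting the Poisson integrals into the small-jump (compensated) and large-jump parts as in \eqref{env} and \eqref{auxiliary}, and checking that the constant $\mathbf{m}$ has been chosen precisely so that the extra drift created by $\exp$ of the compensated small jumps and of the Gaussian part is absorbed. This is the place where the relation $e^{K_t}=e^{at}\mathcal{E}(S)_t$ must be justified carefully (equivalently, $S$ and $K$ are related by the L\'evy exponent transformation $\theta\mapsto e^{\theta}-1$), and it is essentially the same computation alluded to just before Proposition~\ref{Prop1}. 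Everything downstream --- the three trichotomy cases and the stationary law --- is then a routine translation of the literature on exponential functionals and generalised Ornstein--Uhlenbeck processes (Carmona--Petit--Yor, Erickson--Maller), and I would cite those rather than reprove them.
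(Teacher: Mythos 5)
Your proposal is correct and follows essentially the same route as the paper: verify the explicit formula \eqref{equationevans} by It\^o's formula (the paper states this in one line; your Dol\'eans--Dade identification $e^{K_t}=e^{at}\mathcal{E}(S)_t$ is exactly the computation that makes it work, and the choice of $\mathbf{m}$ in \eqref{auxiliary} does absorb the Gaussian and small-jump corrections as you anticipate), then read off the asymptotics from the exponential functional $\int_0^\infty e^{K_s}\ud s$. Two small remarks. First, in part (ii) you argue directly from $\liminf_{t\to\infty}K_t=-\infty$ and the bound $Z_t\le Z_0e^{K_t}$; this is slightly more elementary than the paper, which instead bounds $Z_t\le \big(ke^{-K_t}\int_0^te^{K_s}\ud s\big)^{-1}$ and invokes the duality lemma to show $Z_t\to0$ in distribution before concluding $\liminf Z_t=0$ a.s. Both work. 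Second, in part (iii) there is a sign slip: with $\hat K=-K$ the dual process, the limit law of $1/(kZ_t)$ is that of $\int_0^\infty e^{-K_s}\ud s=\int_0^\infty e^{\hat K_s}\ud s$ (finite precisely because $K\to+\infty$), not $\int_0^\infty e^{-\hat K_s}\ud s$ as written, which would diverge; the intended time-reversal argument is the paper's, which divides numerator and denominator of \eqref{equationevans} by $e^{K_t}$ and applies the duality lemma. The integral equation for the density is taken by the paper from Arista and Rivero rather than Carmona--Petit--Yor, but it is the same type of identity, and the ergodic-average computation coincides with the paper's.
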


\begin{proof}By It\^o's formula, we see that the process $Z$ satisfies (\ref{equationevans}). Moreover, since the L\'evy process $K$ has infinite lifetime, then we necessarilly have $Z_t>0$ a.s.  

Now in order to  describe the asymptotic behaviour of $Z$ we recall the following result of  Bertoin and Yor \cite{beryor} on exponential functionals of L\'evy processes,
\begin{align}\label{f exponencial}
\int_0^\infty e^{K_s}\ud s<\infty \quad a.s.  \qquad\mbox{ if and only if } \qquad K \mbox{ drifts to }-\infty.
\end{align}
Therefore part (i) follows directly from (\ref{f exponencial}). Next, we prove part (ii). Assume that the process $K$ oscillates. On the one hand, we have 
$$ Z_t=\frac{Z_0}{e^{-K_t}+kZ_0e^{-K_t}\displaystyle\int_0^t e^{K_s}\ud s}\leq \frac{1}{ke^{-K_t}\displaystyle\int_0^t e^{K_s}\ud s}.$$
On the other hand, from the duality Lemma (see for instance Lemma 3.4 in \cite{Kyp}) we deduce
\[
\left(K_t,e^{-K_t}\int_0^t e^{K_s}\ud s\right) \qquad \textrm{is equal in law to}\qquad \left(K_t,\int_0^t e^{-K_s}\ud s\right). 
\]
From (\ref{f exponencial}) and our assumption, we have that the exponential functional of $K$ goes to  $\infty$ as $t\rightarrow \infty$. This implies that $\lim_{t\rightarrow\infty}Z_t=0$ in distribution and therefore,
\[
\liminf_{t\rightarrow\infty}Z_t=0,\qquad \textrm{a.s.}
\] 
 Finally, we assume that the process $K$ drifts to $\infty$.  Then, form  the previous observation, $Z_t$ is equal in law to 
$$Z_t\overset{d}{=}\frac{Z_0}{e^{-K_t}+kZ_0e^{-K_t}\displaystyle\int_0^t e^{K_s}\ud s}.$$
Using (\ref{f exponencial}), we have that $Z_t$ converges in distribution to 
$$\left(k\int_0^{\infty}e^{-K_s}\ud s\right)^{-1}.$$
The form of the density follows from Theorem 1 of Arista and Rivero \cite{AR}.

We finish our proof by observing that 
$$\int_0^t Z_s\ud s=\frac{1}{k}\ln\left(1+kZ_0\int_0^te^{K_s}\ud s \right).$$
Therefore if $0<\Exp{K_1}<\infty$ a simple application of the law of large numbers allow us to deduce
$$\lim_{t\rightarrow\infty}\frac{1}{t}\int_0^t Z_s\ud s=\lim_{t\rightarrow\infty}\frac{1}{kt}\ln\left(\int_0^te^{K_s}\ud s \right)=\frac{1}{k}\Exp{K_1}, \quad \textrm{a.s.}.$$
This completes the proof.
\end{proof}
The asymptotic behaviour of the positive moments of $Z_t$ has been studied in  Palau et al. \cite{PPS} using a fine study of the negative moments of exponential functional of L\'evy processes. In particular four  different regimes appears that depends on the characteristic of the L\'evy process $K$. 

We finish this section with two important observations in two particular cases.  We first assume that the process $K$ drifts to $+\infty$ and that satisfies
\[
\int_{[1,\infty)} e^{qx}\pi(\ud x)<\infty\qquad \textrm{for every}\quad q>0,
\]
i.e. that has exponential moments of all positive orders. In this situation, the characteristic exponent $\Psi_k$ has an analytic extension to the half-plane with negative imaginary part, and one has 
\[
\mathbb{E}[e^{qK_t}]=e^{t\psi_K(q)}<\infty, \qquad t,q\ge 0,
\]
where $\psi_K(q)=-\Psi_K(-iq)$ for $q\ge 0$. Hence, according to Theorem 3 in Bertoin and Yor \cite{beryor} the stationary distribution has positive moments and satisfies, for $z>0$ and $n\ge 1$,
\[
\mathbb{E}_z\Big[Z^n_\infty\Big]=\psi^\prime_K(0+)\frac{\psi_{K}(1)\cdots \psi_{K}(n-1)}{(n-1)!}.
\]
Finally,  we assume that the process $K$ drifts to $-\infty$ and has no negative jumps. Observe that the process $Z$ inherited the latter property and we let $Z_0=z>0$. Under this assumption, we can compute the Laplace transform of the first passage time from below a level $z>b> 0$ of the process $Z$, i.e.
\[
\sigma_b=\inf\{s\ge 0: Z_s\le b\}.
\]
In this situation, the characteristic exponent $\Psi_k$ has an analytic extension to the half-plane with positive imaginary part, and one has 
\[
\mathbb{E}[e^{-qK_t}]=e^{t\hat{\psi}_K(q)}<\infty, \qquad t,q\ge 0,
\]
where $\hat{\psi}_K(q)=-\Psi_K(iq)$ for $q\ge 0$.
Define, for all $t\geq 0$, $\mathcal{F}_t = \sigma(K_s: s\leq t)$ and consider the Esscher transform
\begin{equation*}
\frac{\ud \mathbb{P}^{\kappa(\lambda)}}{\ud \mathbb{P}}\bigg|_{\mathcal{F}_t}=e^{-\kappa(\lambda) K_t-\lambda t},\qquad \textrm{for } \lambda\ge 0,
\label{esscher}
\end{equation*}
where $\kappa(\lambda)$ is the largest solution to $\hat{\psi}_K(u)=\lambda$.
Under $\mathbb{P}^{\kappa(\lambda)}$, the process $K$ is still a spectrally positive and its Laplace exponent, $\hat{\psi}_{\kappa(\lambda)}$ satisfies the relation
\[
\hat{\psi}_{\kappa(\lambda)}(u)=\hat{\psi}_K(\kappa(\lambda)+u)-\lambda, \qquad\textrm{ for}\quad u\ge 0. 
\]
See  \cite[Chapter 8]{Kyp}
 for further details on the above remarks. 
 Note in particular that it is easy to verify that $\hat{\psi}_{\kappa(\lambda)}'(0+)>0$ and hence the process $K$ under $\mathbb{P}^{\kappa(\lambda)}$ drifts to $-\infty$. According to earlier discussion, this guarantees that also under $\mathbb{P}^{\kappa(\lambda)}$, the process $Z$ goes to $0$ as $t\to\infty$.
\begin{lemma}
Suppose that $\lambda\geq 0$ and that $\kappa(\lambda)>1$, then for all $0<b\leq z$,
\[
\mathbb{E}_z\Big[e^{-\lambda \sigma_{b}}\Big] = \frac{\mathbb{E}^{\kappa(\lambda)} \Big[ (1 +kz I_\infty)^{\kappa(\lambda)}\Big]}{\mathbb{E}^{\kappa(\lambda)} \Big[ (zb^{-1} +kz I_\infty)^{\kappa(\lambda)}\Big]},
\]
where 
\[
I_\infty = \int_0^\infty e^{K_s}{\rm d}s.
\]
\end{lemma}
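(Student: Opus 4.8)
The plan is to use the explicit representation \eqref{equationevans} of $Z$ to translate the first passage problem for $Z$ into a first passage problem for the exponential functional process of $K$, and then to apply the Esscher change of measure together with the Markov property. First I would observe that, since $K$ drifts to $-\infty$ and has no negative jumps, the process $Z$ has no negative jumps and $\sigma_b$ is a.s. finite for $b\le z$; moreover from \eqref{equationevans},
\[
Z_t = \frac{z e^{K_t}}{1 + kz\int_0^t e^{K_s}\ud s},
\]
so that $Z_t\le b$ is equivalent to $z e^{K_t} \le b\bigl(1 + kz\int_0^t e^{K_s}\ud s\bigr)$. Writing $I_t=\int_0^t e^{K_s}\ud s$, one checks that $\sigma_b = \inf\{t\ge 0: z e^{K_t} - kzb I_t \le b\}$, i.e. $\sigma_b$ is a first passage time for the semimartingale $t\mapsto z e^{K_t}-kzbI_t$ across the level $b$. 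I would then introduce the time-reversal/duality identity as used in the previous proposition: on each fixed horizon, $(K_t, e^{-K_t}I_t)$ has the law of $(K_t,\int_0^t e^{-K_s}\ud s)$, and more importantly I want a pathwise decomposition of $I_\infty$ at time $t$, namely $I_\infty = I_t + e^{K_t}\widetilde I_\infty$ where $\widetilde I_\infty=\int_0^\infty e^{K_{t+s}-K_t}\ud s$ is, by the Markov property, an independent copy of $I_\infty$.

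Next I would bring in the Esscher transform $\mathbb{P}^{\kappa(\lambda)}$ with $\ud\mathbb{P}^{\kappa(\lambda)}/\ud\mathbb{P}|_{\mathcal{F}_t}=e^{-\kappa(\lambda)K_t-\lambda t}$. The key computation is to express $e^{-\lambda\sigma_b}$ via optional stopping: since $e^{-\kappa(\lambda)K_t-\lambda t}$ is a mean-one martingale, for any bounded stopping time we have the change-of-measure identity, and letting the horizon go to infinity (using that $\kappa(\lambda)>1$ and $K$ drifts to $-\infty$ under $\mathbb{P}^{\kappa(\lambda)}$ to control integrability, in particular that $I_\infty<\infty$ a.s. under $\mathbb{P}^{\kappa(\lambda)}$ by \eqref{f exponencial}) one obtains
\[
\mathbb{E}_z\bigl[e^{-\lambda\sigma_b}\,F\bigr] = \mathbb{E}^{\kappa(\lambda)}\bigl[e^{\kappa(\lambda)K_{\sigma_b}}\,F\bigr]
\]
for suitable $\mathcal{F}_{\sigma_b}$-measurable $F\ge 0$. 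I would apply this with $F$ chosen so that both sides can be re-expressed in terms of $I_\infty$: concretely, multiply and divide by $(1+kzI_\infty)^{\kappa(\lambda)}$ and use the decomposition $I_\infty = I_{\sigma_b}+e^{K_{\sigma_b}}\widetilde I_\infty$ with $\widetilde I_\infty$ independent of $\mathcal{F}_{\sigma_b}$ to pull the post-$\sigma_b$ randomness out as an independent expectation, which reconstructs an $I_\infty$-type factor. On the event one is on, at time $\sigma_b$ one has $z e^{K_{\sigma_b}} = b(1+kzI_{\sigma_b})$ (no overshoot, by spectral positivity of $K$ hence of the relevant process), so $e^{K_{\sigma_b}} = (b/z)(1+kzI_{\sigma_b})$; substituting this turns $e^{\kappa(\lambda)K_{\sigma_b}}$ and the reconstructed factor into powers of $(zb^{-1}+kzI_\infty)$ versus $(1+kzI_\infty)$, yielding exactly
\[
\mathbb{E}_z\bigl[e^{-\lambda\sigma_b}\bigr] = \frac{\mathbb{E}^{\kappa(\lambda)}\bigl[(1+kzI_\infty)^{\kappa(\lambda)}\bigr]}{\mathbb{E}^{\kappa(\lambda)}\bigl[(zb^{-1}+kzI_\infty)^{\kappa(\lambda)}\bigr]}.
\]

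The main obstacle I anticipate is the rigorous passage to the infinite horizon in the optional stopping / change of measure step and the justification that there is no overshoot contribution — one must verify that $e^{-\kappa(\lambda)K_{t\wedge\sigma_b}-\lambda(t\wedge\sigma_b)}$ is uniformly integrable (or argue via monotone/dominated convergence using $\kappa(\lambda)>1$ and finiteness of the relevant moments of $I_\infty$ under $\mathbb{P}^{\kappa(\lambda)}$, cf. Bertoin–Yor \cite{beryor}), and that the spectral positivity of $K$ indeed forces $Z$ (equivalently the process $ze^{K_t}-kzbI_t$) to hit the level $b$ continuously, so that $K_{\sigma_b}$ satisfies the exact identity above with no jump overshoot. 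The identification of the independent post-$\sigma_b$ functional as a fresh copy of $I_\infty$ via the Markov property and the additive decomposition of the exponential functional is the other point requiring care, but it is standard once the stopping time is shown to be a.s. finite and $\mathcal{F}_{\sigma_b}$ is well defined. The remaining manipulations — substituting $e^{K_{\sigma_b}}=(b/z)(1+kzI_{\sigma_b})$ and collecting powers — are routine algebra.
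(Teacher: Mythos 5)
Your proposal is correct and follows essentially the same route as the paper: absence of negative jumps gives $ze^{K_{\sigma_b}}=b(1+kzI_{\sigma_b})$ with no overshoot, the Markov property yields $1+kzI_\infty=e^{K_{\sigma_b}}\bigl(zb^{-1}+kzI'_\infty\bigr)$ with $I'_\infty$ an independent copy, and the Esscher transform gives $\mathbb{E}_z[e^{-\lambda\sigma_b}]=\mathbb{E}^{\kappa(\lambda)}[e^{\kappa(\lambda)K_{\sigma_b}}]$, from which the ratio follows by independence. The only difference is cosmetic: the paper verifies finiteness of $\mathbb{E}^{\kappa(\lambda)}[(a+kzI_\infty)^{\kappa(\lambda)}]$ via Lemma 2.1 of Maulik and Zwart rather than Bertoin--Yor, which serves the same purpose.
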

\begin{proof}
From the absence of negative jumps  we have $Z_{\sigma_b}=b$ on the event $\{\sigma_b<\infty\}$ and in particular
\[
b=\frac{ze^{K_{\sigma_b}}}{1+kz\displaystyle \int_0^{\sigma_b} e^{K_s}\ud s}.
\]
On the other hand, from the Markov property and  the above identity, we have
\[
1+kzI_\infty=1+kz\displaystyle \int_0^{\sigma_b} e^{K_s}\ud s+kze^{K_{\sigma_b}}\displaystyle \int_0^{\infty} e^{K_{\sigma_b+s}-K_{\sigma_b}}\ud s=e^{K_{\sigma_b}}\left(\frac{z}{b}+zkI^\prime_\infty\right),
\]
where $I'_\infty$ is an independent copy of $I_\infty$.

The latter identity and the Escheer transform imply that for $\lambda\ge 0$
\[
\mathbb{E}_z\Big[e^{-\lambda \sigma_b}\Big]=\mathbb{E}^{\kappa(\lambda)} \Big[e^{\kappa(\lambda) K_{\sigma_b}}\Big]=\frac{\mathbb{E}^{\kappa(\lambda)} \left[\left(1+kz I_\infty\right)^{\kappa(\lambda)}\right]}{\mathbb{E}^{\kappa(\lambda)} \left[\left(\displaystyle\frac{z}{b}+zkI_\infty\right)^{\kappa(\lambda)}\right]},
\]
provided  the  quantity
$\mathbb{E}^{\kappa(\lambda)}[(a+kzI_\infty)^{\kappa(\lambda)}]$ is finite, for $a>0$. 

Observe that for $s\geq 1$,
\[
\mathbb{E}^{\kappa(\lambda)}\Big[(a+I_\infty)^s\Big]\le 2^{s -1}\Big(a^s+\mathbb{E}^{\kappa(\lambda)}[I_\infty^s]\Big),
\]
hence it suffices to investigate the finiteness of $\mathbb{E}^{\kappa(\lambda)}[I_\infty^s]$.
According to  Lemma 2.1 in Maulik and Zwart \cite{MaZ} the expectation $\mathbb{E}^{\kappa(\lambda)}[I_\infty^{s}]$ is finite for all $s\ge 0$ such that $-\hat{\psi}_{\kappa(\lambda)}(- s)>0$. Since $\hat{\psi}_{\kappa(\lambda)}(- s)$ is well defined for $\kappa(\lambda)-s\ge 0$, 
then a straightforward computation gives us that $\mathbb{E}^{\kappa(\lambda)}[I_\infty^s]<\infty$ for $s\in [0,\kappa(\lambda)]$. 
\end{proof}
\section{Appendix}
{The following results are useful for  the proofs of Theorem \ref{Existencia Z} and Proposition \ref{Prop1}. Most of the proof of the results that we present here follow the same arguments from similar results that appear in \cite{FuLi} or \cite{LiPu}. We will provide the proof of those results that we believe are more complicated to deduce from \cite{FuLi} or \cite{LiPu}. }

 Given a differentiable function $f$, we write
$$ \Delta_xf(a)=f(x+a)-f(a)\qquad \mbox{and} \qquad D_xf(a)=\Delta_xf(a)-f'(a)x.$$

\noindent Let $(a_n, n\ge 1)$ a sequence of positive real numbers such that $a_0=1$,   $a_n\downarrow0$ and $\int_{a_n}^{a_{n-1}}z\ud z=n$ 
 for each $n\in\N$. Let $x\mapsto \kappa_n(x)$ be a non-negative continuous function supported on $(a_n,a_{n-1})$ such that  $\kappa_n(x)\leq 2(nx)^{-1}$, for every $x>0$, and $\int_{a_n}^{a_{n-1}}\kappa_n(x)\ud x=1.$
	For $\ell\geq 0$, let us define
	$$f_\ell(z)=\int_{0}^{|z|}\ud y\int_0^y \kappa_\ell(x)\ud x, \qquad z\in\R.$$
	Observe that $(f_\ell, \ell\ge 0)$ is a non-decreasing sequence of functions that converges to the {mapping} $x\mapsto |x|$ as $\ell$ {increases}. For all $a,x\in\R$, we have $|f_\ell'(a)|\leq 1$ and  $|f_\ell(a+x)-f_\ell(a)|\leq |x|$. Moreover, by Taylor's expansion, we deduce
		\begin{align*}
		\Big|D_xf_\ell(a)\Big|\leq x^2\int_0^1\kappa_\ell(|a+xu|)(1-u)\ud u\leq \frac{2}{\ell}x^2 \int_0^1\frac{(1-u)}{|a+xu|}\ud u.
		\end{align*}

The proof of the following lemma can be found in \cite{LiPu} (Lemma 3.1).
		
\begin{lemma}\label{lema3.1lipu}
Suppose that $x\mapsto x+h(x,v)$ is non-decreasing for $v\in\mathcal{V}$. Then, for any $ x\neq y\in\R$,
{\begin{align*}
D_{l(x,y,v)}f_m(x-y)\leq \frac{2}{m}\int_0^1\frac{l(x,y,u)^2 (1-u)}{|x-y+ul(x,y,v)|}\ud u\leq \frac{2 l(x,y,v)^2}{m|x-y|},
\end{align*}}
where $l(x,y,v)=h(x,v)-h(y,v)$.
\end{lemma}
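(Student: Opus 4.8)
The plan is to combine the integral form of the second-order Taylor expansion of $f_m$ with the pointwise bound $\kappa_m(w)\le 2/(mw)$, and then to invoke the monotonicity hypothesis to control the denominator that appears. First I would record that $f_m\in C^2(\R)$: since $\kappa_m$ is continuous with support in $(a_m,a_{m-1})$, the function $f_m$ vanishes identically on $[-a_m,a_m]$ and, off the origin, $f_m''(z)=\kappa_m(|z|)$, which extends continuously to $z=0$; in particular $f_m''\ge0$, so $f_m$ is convex and even. Expanding $f_m$ at the point $x-y$ with increment $l:=l(x,y,v)$ then yields
\[
D_{l}f_m(x-y)=f_m(x-y+l)-f_m(x-y)-f_m'(x-y)\,l=l^2\int_0^1(1-u)\,f_m''(x-y+ul)\,\ud u,
\]
which is exactly the computation already performed in the discussion preceding Lemma \ref{lema3.1lipu}; since $f_m''\ge0$, the right-hand side is non-negative.

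Substituting $f_m''(w)=\kappa_m(|w|)\le 2/(m|w|)$ (valid for $w\neq 0$, and harmless on the null set where $x-y+ul=0$) gives at once the first asserted inequality
\[
D_{l}f_m(x-y)\le\frac{2}{m}\int_0^1\frac{l^2\,(1-u)}{|x-y+ul|}\,\ud u .
\]
For the second inequality the monotonicity hypothesis enters. I would write $x-y+ul=(1-u)(x-y)+u\big((x-y)+l\big)$ and note that $(x-y)+l=\big(x+h(x,v)\big)-\big(y+h(y,v)\big)$ has the same sign as $x-y$, because $z\mapsto z+h(z,v)$ is non-decreasing. Hence $x-y$, $(x-y)+l$ and every convex combination $x-y+ul$ share that sign, so for $u\in[0,1]$
\[
|x-y+ul|=(1-u)|x-y|+u\,|(x-y)+l|\ge(1-u)|x-y| .
\]
Therefore $(1-u)/|x-y+ul|\le 1/|x-y|$ for a.e.\ $u$, and integrating over $[0,1]$ gives $\int_0^1(1-u)|x-y+ul|^{-1}\,\ud u\le |x-y|^{-1}$, so $\tfrac{2}{m}\int_0^1 l^2(1-u)|x-y+ul|^{-1}\,\ud u\le 2l^2/(m|x-y|)$, which is the claim.

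The estimate is otherwise routine; the one step that genuinely needs the hypothesis — and the one I would single out as the crux — is the sign argument giving $|x-y+ul|\ge(1-u)|x-y|$. Without knowing that $x-y$ and $(x-y)+l$ do not straddle $0$, the denominator $|x-y+ul|$ cannot be bounded below by a constant multiple of $|x-y|$ and the second inequality breaks down; the monotonicity of $z\mapsto z+h(z,v)$ is precisely what forbids such straddling. I would also note that the statement is symmetric under interchanging $x$ and $y$ — both $|x-y|$ and $l(x,y,v)^2$ are, and $D_{l(x,y,v)}f_m(x-y)=D_{l(y,x,v)}f_m(y-x)$ by evenness of $f_m$ — so one could, if convenient, reduce to the case $x>y$ before running the sign argument.
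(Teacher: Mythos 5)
Your proof is correct and follows essentially the same route the paper intends: the integral Taylor remainder with $f_m''=\kappa_m(|\cdot|)\le 2/(m|\cdot|)$ is exactly the computation displayed just before the lemma, and the sign argument showing $|x-y+ul|\ge(1-u)|x-y|$ via monotonicity of $z\mapsto z+h(z,v)$ is precisely the content of Lemma~3.1 of Li and Pu, to which the paper defers. No gaps.
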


The following result shows that pathwise uniqueness holds if the parameters $(b, (\sigma_k)_{k\in K},$
$  (h_i)_{i\in I}, (g_j)_{j\in J})$ are admissible and satisfies conditions a), b) and c). 
\begin{lemma}\label{pathwise}
	The pathwise uniqueness holds for the positive solutions of
	\begin{equation}\label{sdenw1}
	\begin{split}
	Z_t^{(n)}&=Z_0+\int_0^t b(Z_s^{(n)}\wedge n) \ud s+\sum_{k\in K}\int_0^t \sigma_k( Z_s^{(n)}\wedge n)\ud B^{(k)}_s \\
	&\hspace{2.8cm}+\sum_{i\in I}\int_0^t\int_{W_i}\big(g_i(Z_{s-}^{(n)}\wedge n,u_i)\wedge n\big) M_i(\ud s,\ud u_i)\\
	&\hspace{4cm}+\sum_{j\in J}\int_0^t\int_{V_j}\big(h_j(Z_{s-}^{(n)}\wedge n,v_j)\wedge n\big) \widetilde{N}_j(\ud s,\ud v_j),
	\end{split}
	\end{equation}
	for every $n\in\N$.
\end{lemma}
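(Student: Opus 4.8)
The plan is to prove pathwise uniqueness for \eqref{sdenw1} via the classical Yamada--Watanabe style argument, using the smooth approximations $f_\ell$ of the absolute value function introduced above together with Lemma~\ref{lema3.1lipu} and Lemma~\ref{lema3.1lipu}'s companion estimate $|D_xf_\ell(a)|\le \tfrac{2}{\ell}x^2\int_0^1 |a+xu|^{-1}(1-u)\,\ud u$. Concretely, suppose $Z'$ and $Z''$ are two non-negative solutions of \eqref{sdenw1} driven by the same Brownian motions $(B^{(k)})_{k\in K}$ and Poisson random measures $(M_i)_{i\in I}$, $(N_j)_{j\in J}$, with $Z'_0=Z''_0$. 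Set $\zeta_t = Z'_t-Z''_t$ and apply It\^o's formula to $f_\ell(\zeta_t)$. Because both processes are capped at level $n$ inside all the coefficients, and because $b_2$ and $x\mapsto x+\sum_j h_j(x,v_j)\wedge n$ are non-decreasing, several terms in the It\^o expansion can be bounded or discarded favourably: the drift $b_2$ part contributes with the right sign (using $f_\ell'\ge 0$ where $\zeta>0$ and $f_\ell'\le 0$ where $\zeta<0$), and the $g_i$-jump terms (which are driven by the \emph{uncompensated} measures $M_i$) can be handled since they only change $\zeta$ by an amount $g_i(Z'_{s-}\wedge n,u_i)\wedge n - g_i(Z''_{s-}\wedge n,u_i)\wedge n$, controlled in $L^1$ by condition b).

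The key steps, in order, are as follows. First I would take expectations in the It\^o formula for $f_\ell(\zeta_{t\wedge\tau_m})$, where $\tau_m$ is a localizing sequence of stopping times making the local martingale terms (the $\sigma_k\,\ud B^{(k)}$ and $\widetilde N_j$ stochastic integrals) true martingales; since the coefficients are bounded on $[0,n]$ this localization is routine and $\tau_m\uparrow\infty$. Second, I would estimate each surviving term on the right-hand side: (i) the $b_1$-Lipschitz-type term and the $g_i$-terms are together bounded by $\Exp{\int_0^t r_n(|\zeta_s|)\,\ud s}$ using condition b); (ii) the $b_2$-term has the good sign and is dropped; (iii) the Brownian terms contribute $\tfrac12\sum_k \Exp{\int_0^t f_\ell''(\zeta_s)|\sigma_k(Z'_s\wedge n)-\sigma_k(Z''_s\wedge n)|^2\,\ud s}$, and using $f_\ell''(z)=\kappa_\ell(|z|)\le 2(\ell|z|)^{-1}$ together with condition c) — which controls $\sum_k|\sigma_k^2(x)-\sigma_k^2(y)|$, hence (since $\sigma_k\ge0$ and continuous) the needed quadratic differences — this term is $O(1/\ell)$; (iv) the compensated-jump $\widetilde N_j$ terms produce $\sum_j \Exp{\int_0^t\int_{V_j} D_{l_j(Z'_{s-},Z''_{s-},v_j)}f_\ell(\zeta_{s-})\,\nu_j(\ud v_j)\,\ud s}$, and by Lemma~\ref{lema3.1lipu} (applicable precisely because of the monotonicity hypothesis in condition c)) this is bounded by $\tfrac{2}{\ell}\sum_j\Exp{\int_0^t\int_{V_j} l_j^2/|\zeta_{s-}|\,\nu_j(\ud v_j)\,\ud s}$, which by the $|l_j|\wedge l_j^2$ bound in condition c) is again $O(1/\ell)$ after splitting $\{|l_j|\le|\zeta|\}$ and its complement. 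Third, letting $\ell\to\infty$ we obtain $\Exp{|\zeta_{t\wedge\tau_m}|}\le \int_0^t \Exp{r_n(|\zeta_{s\wedge\tau_m}|)}\,\ud s \le \int_0^t r_n(\Exp{|\zeta_{s\wedge\tau_m}|})\,\ud s$ by Jensen and concavity of $r_n$. Since $r_n$ is non-decreasing, concave with $\int_{0+}r_n(z)^{-1}\,\ud z=\infty$, a Bihari/Osgood-type comparison lemma (the standard nonlinear Gronwall argument) forces $\Exp{|\zeta_{t\wedge\tau_m}|}=0$ for all $t$; finally letting $m\to\infty$ and using Fatou gives $\Exp{|\zeta_t|}=0$, whence $Z'$ and $Z''$ are indistinguishable by right-continuity.

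The main obstacle I expect is the careful bookkeeping of the jump terms — specifically, justifying the $O(1/\ell)$ bounds uniformly and verifying that condition c)'s hypothesis that $x\mapsto x+\sum_j h_j(x,v_j)\wedge n$ is non-decreasing is exactly what licenses the use of Lemma~\ref{lema3.1lipu} term-by-term in $j$ (one must be slightly careful, since the lemma is stated for a single $h$; one applies it after noting the sum structure still yields a non-decreasing map and the $D$-operator is additive up to the cross terms, which vanish because the Poisson measures do not jump simultaneously). A secondary technical point is handling the fact that the $g_i$-integrals are over possibly infinite-mass regions $W_i$ — but condition a) gives the $L^1(1+x)$ control of $|g_i\wedge 1|$ on $[0,n]$, and combined with the cap at $n$ this makes all the relevant integrals finite; this is where one also invokes that $\mu_i(U_i\setminus W_i)<\infty$ is \emph{not} needed here since \eqref{sdenw1} already integrates only over $W_i$. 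Once these estimates are in place the nonlinear Gronwall step is standard and short.
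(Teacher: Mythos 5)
Your proposal follows essentially the same route as the paper's proof: It\^o's formula applied to $f_\ell(Z'-Z'')$, the favourable sign of the $b_2$-term, the $r_n$-bound from condition b) for the drift and the $g_i$-terms, Lemma~\ref{lema3.1lipu} together with condition c) to make the diffusion and compensated-jump contributions $O(1/\ell)$, and finally the concave (Bihari/Osgood) Gronwall argument. The localization and Jensen/concavity steps you spell out are left implicit in the paper, but the argument is the same.
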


\begin{proof}
We consider $Z_t$ and $Z_t'$ two solutions of (\ref{sdenw1}) and let $Y_t=Z_t-Z_t'$. Therefore, $Y_t$ satisfies the SDE
	\[
	\begin{split}
	Y_t&= Y_0+\int_0^t\Big( b(Z_s\wedge n)-b(Z_s'\wedge n)\Big) \ud s+\sum_{k\in K}\int_0^t\Big( \sigma_k( Z_s\wedge n)-\sigma_k( Z_s'\wedge n)\Big)\ud B^{(k)}_s \\
	&+\sum_{i\in I}\int_0^t\int_{W_i}\widetilde{g}^{(n)}_i(Z_{s-},Z_{s-}',u_i) M_i(\ud s,\ud u_i)+\sum_{j\in J}\int_0^t\int_{V_j}\widetilde{h}^{(n)}_j(Z_{s-},Z_{s-}',v_j)\widetilde{N}_j(\ud s,\ud v_j),
	\end{split}
	\]
	{where 
	\begin{align*}
	\widetilde{g}^{(n)}_i(x,y,u_i)&= g_i(x\wedge n,u_i)\wedge n-g_i(y\wedge n,u_i)\wedge n,\\
	\widetilde{h}^{(n)}_j(x,y,v_j)&=h_j(x\wedge n,v_j)\wedge n-h_j(y\wedge n,v_j)\wedge n.
	\end{align*}}
	By applying It\^o's formula to the functions $f_\ell$, we deduce
	{\begin{equation}\label{trayecto}
	\begin{split}
	f_\ell(Y_t)&=f_\ell(Y_0)+M_t+\int_0^t f_\ell'(Y_s)\Big( b(Z_s\wedge n)-b(Z_s'\wedge n)\Big)\ud s\\
	&\hspace{2.5cm}+\sum_{k\in K}\frac{1}{2}\int_0^tf_\ell''(Y_s) \Big( \sigma_k( Z_s\wedge n)-\sigma_k( Z_s'\wedge n)\Big)^2 \ud s  \\
	&\hspace{3cm}+\sum_{i\in I}\int_0^t\int_{W_i}\Delta_{\widetilde{g}^{(n)}_i(Z_{s-},Z_{s-}',u_i)} f_\ell(Y_{s-})\mu_i(\ud u_i)\ud s\\
	&\hspace{3.5cm}+\sum_{j\in J}\int_0^t\int_{V_j} D_{\widetilde{h}^{(n)}_i(Z_{s-},Z_{s-}',u_i)}f_\ell(Y_{s-})\nu_j(\ud v_j)\ud s,
	\end{split}
	\end{equation}
	where $M_t$ is a martingale. Since $b=b_1-b_2$ as in condition (b), we have 
\[
|f_\ell'(x-y)|| b(x\wedge n)-b(y\wedge n)|\leq |b_1(x\wedge n)-b_1(y\wedge n)|\leq r_n(|x-y|\wedge n),
\]
and
\[
	\sum_{i\in I}\int_{W_i}\Delta_{\widetilde{g}^{(n)}_i(x,y,u_i)}f_\ell(x-y)\mu_i(\ud u_i)\leq \sum_{i\in I}\int_{W_i}|\widetilde{g}^{(n)}_i(x,y,u_i)|\mu_i(\ud u_i)\leq r_n (|x-y|\wedge n).
\]
 Since $x\mapsto x+h_j(x\wedge n,v)\wedge n $ is non-decreasing for  $j\in J$, by Lemma \ref{lema3.1lipu} and condition (c), we deduce 
	\[
	\sum_{j\in J}\int_{V_j}D_{	\widetilde{h}^{(n)}_j(x,y,v_j)}f_\ell(x-y)\nu_j(\ud v_j)\leq \sum_{j\in J}\int_{V_j}\frac{2 \widetilde{h}^{(n)}_j(x,y,v_j)^2}{\ell |x-y|}\nu_j(\ud v_j)\leq \frac{2B_n}{\ell},
	\]
and
	\[
	\sum_{k\in K}f_\ell''(x-y)(\sigma_k(x)-\sigma_k(y))^2\leq \sum_{k\in K}\kappa_\ell(x-y)|\sigma_k(x)^2-\sigma_k(y)^2|\leq \frac{2B_n}{\ell}.
	\]
Hence taking  expectations in both sides of equation (\ref{trayecto}) and putting all the pieces together, we get
	\begin{align*}
	\Exp{f_\ell(Y_t)}\leq&\Exp{f_\ell(Y_0)}+2\int_0^t \Exp{r_n(|Y_s|\wedge n)}\ud s +4\ell^{-1}B_n.
	\end{align*}
	Since $f_\ell(z)$ increases towards $|z|$ as $\ell\rightarrow\infty$, we have
	$$\Exp{|Y_t|}\leq\Exp{|Y_0|}+2\int_0^t \Exp{r_n(|Y_s|\wedge n)}\ud s.$$
	Finally from  Gronwall's inequality, we can deduce that  pathwise uniqueness of solutions holds for (\ref{sdenw1}).}
\end{proof}

Suppose that $\mu_i(U_i\setminus W_i)<\infty$, for all $i\in I$.
Our next result shows that if there is a unique strong solution to (\ref{sdenw1}), we  can replace the spaces $(W_i)_{i\in I}$ by $(U_i)_{i\in I}$ on the SDE and the unique strong solution still exists for the extended SDE. {Its proof follows from similar arguments as those used in Proposition 2.2 in \cite{FuLi}. }
\begin{lemma}\label{subconjuntow} If there is a unique  strong solution to (\ref{sdenw1}) and $\mu_i(U_i\setminus W_i)<\infty$ for all $i\in I$, then there is also a strong solution to 
	\begin{equation*}
	\begin{split}\label{sdenapen1}
	Z_t^{(n)}=Z_0&+\int_0^t b(Z_s^{(n)}\wedge n) \ud s+\sum_{k\in K}\int_0^t \sigma_k( Z_s^{(n)}\wedge n)\ud B^{(k)}_s \\
	&\hspace{1cm}+\sum_{i\in I}\int_0^t\int_{{U_i}}\big(g_i(Z_{s-}^{(n)}\wedge n,u_i)\wedge n\big) M_i(\ud s,\ud u_i)\\
	&\hspace{2cm}+\sum_{j\in J}\int_0^t\int_{V_j}\big(h_j(Z_{s-}^{(n)}\wedge n,v_j)\wedge n\big) \widetilde{N}_j(\ud s,\ud v_j).
	\end{split}
	\end{equation*} 
\end{lemma}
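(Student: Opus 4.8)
The plan is to add the "large-jump" part of each Poisson random measure $M_i$ by an interlacing (piecing-out) construction, exactly as in Proposition 2.2 of \cite{FuLi}. Write $\lambda_i=\mu_i(U_i\setminus W_i)<\infty$ and $\lambda=\sum_{i\in I}\lambda_i<\infty$; the restriction of $\sum_{i\in I}M_i$ to $\bigcup_{i\in I}\R_+\times(U_i\setminus W_i)$ is then a Poisson random measure with finite rate $\lambda$, so its atoms can be enumerated as $(\tau_k,\iota_k,\xi_k)_{k\ge 1}$, where $0<\tau_1<\tau_2<\cdots$ are the jump times (with exponential$(\lambda)$ increments), $\iota_k\in I$ records which measure carries the atom, and $\xi_k\in U_{\iota_k}\setminus W_{\iota_k}$ its spatial mark. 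First I would build the solution recursively on the successive intervals $[\tau_{k-1},\tau_k)$: on $[0,\tau_1)$ let $Z^{(n)}$ be the unique strong solution of \eqref{sdenw1} started from $Z_0$; at time $\tau_1$ add the jump $g_{\iota_1}(Z^{(n)}_{\tau_1-}\wedge n,\xi_1)\wedge n$, i.e.\ set $Z^{(n)}_{\tau_1}=Z^{(n)}_{\tau_1-}+g_{\iota_1}(Z^{(n)}_{\tau_1-}\wedge n,\xi_1)\wedge n$, which is $\ge 0$ by admissibility (iii); and then, using the strong Markov property of the driving noises (the shifted Brownian motions and Poisson measures restricted to the $W_i$'s are again independent noises of the same type), continue on $[\tau_1,\tau_2)$ with the unique strong solution of \eqref{sdenw1} started from $Z^{(n)}_{\tau_1}$, and iterate.

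The second step is to check that this piecewise construction genuinely solves \eqref{sdenapen1}. On each interval $[\tau_{k-1},\tau_k)$ the process satisfies \eqref{sdenw1}, and at each $\tau_k$ we have inserted precisely the jump prescribed by the extra atoms of the $M_i$'s in $U_i\setminus W_i$; summing the integral identities over the intervals and adding the jump contributions at the $\tau_k$'s gives exactly \eqref{sdenapen1}, because $\int_0^t\int_{U_i}=\int_0^t\int_{W_i}+\sum_{k:\,\tau_k\le t,\ \iota_k=i}(\cdots)$. Since $\tau_k\uparrow\infty$ a.s.\ (finitely many jumps on any compact time interval), this determines $Z^{(n)}$ on all of $\R_+$ and the resulting process has càdlàg paths and is non-negative. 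Adaptedness to the augmented filtration generated by $(B^{(k)})_{k\in K}$, $(M_i)_{i\in I}$, $(N_j)_{j\in J}$ is immediate from the construction, so $Z^{(n)}$ is a strong solution; in fact strong (pathwise) uniqueness also propagates, since two solutions of \eqref{sdenapen1} must agree on $[0,\tau_1)$ by uniqueness for \eqref{sdenw1}, hence at $\tau_1$, hence on $[\tau_1,\tau_2)$, and so on.

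The only genuinely delicate point is the use of the strong Markov property of the system of driving noises at the random times $\tau_k$: one must verify that, conditionally on $\F_{\tau_k}$, the restrictions of the Brownian motions and of the $M_i$'s to $\R_+\times W_i$ and of the $N_j$'s, all shifted by $\tau_k$, are again mutually independent objects of the same law, so that "the unique strong solution of \eqref{sdenw1} started from $Z^{(n)}_{\tau_k}$" makes sense on each new interval and the pieces glue measurably. This is standard once one notes that $\tau_k$ depends only on the atoms of the $M_i$'s lying outside the $W_i$'s, which are independent of all the noises appearing in \eqref{sdenw1}; I would cite the corresponding step in the proof of Proposition 2.2 in \cite{FuLi} rather than reproduce it.
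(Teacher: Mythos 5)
Your proposal is correct and follows essentially the same route as the paper, which simply defers to Proposition 2.2 of Fu and Li \cite{FuLi} — that proposition is exactly the interlacing construction you spell out (enumerate the finitely many atoms of the $M_i$'s outside the $W_i$'s, solve \eqref{sdenw1} between consecutive jump times, insert the prescribed jump at each such time, and glue using the independence of the large-jump atoms from the remaining noises). Your additional remarks on non-negativity of the inserted jumps via admissibility (iii) and on the propagation of pathwise uniqueness are consistent with how the paper uses the lemma.
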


{Next, recall that for each $n,\ell\in \N$, the process $Z_t^{(n,\ell)}$ was defined as the solution to
\begin{equation}
\begin{split}\label{z,n,m1}
Z_t^{(n,\ell)}=Z_0&+\int_0^t b(Z_s^{(n,\ell)}\wedge n) \ud s+\sum_{k\in K}\int_0^t \sigma_k( Z_s^{(n,\ell)}\wedge n)\ud B^{(k)}_s \\
&\hspace{1cm}+\sum_{i\in I}\int_0^t\int_{W_i^\ell}\big(g_i(Z_{s-}^{(n,\ell)}\wedge n,u_i)\wedge n\big) M_i(\ud s,\ud u_i)\\
&\hspace{2cm}+\sum_{j\in J}\int_0^t\int_{V_j^\ell}\big(h_j(Z_{s-}^{(n,\ell)}\wedge n,v_j)\wedge n\big) \widetilde{N}_j(\ud s,\ud v_j).
\end{split}
\end{equation}
Following step by step the proof of Lemma 4.3 in \cite{FuLi}, we will prove that for each $n\in \N$, the sequence $\left\{(Z^{(n,\ell)}_t: t\geq 0): \ell\ge 1\right\}$ is tight in $D(\R_+,\R_{+})$.}

{\begin{lemma}\label{sequence znm}
	For each $n\in \N$ the sequence $\left\{(Z^{(n,\ell)}_t: t\geq 0); \ell \ge 1\right\}$ given by \eqref{z,n,m1} is tight in the Skorokhod space $D(\R_+,\R_{+})$.
\end{lemma}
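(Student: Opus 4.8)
The plan is to verify the two conditions of Aldous's tightness criterion in the Skorokhod space $D(\R_+,\R_+)$, following step by step the proof of Lemma 4.3 in \cite{FuLi}. These are: a compact containment condition, here in the convenient form that for each fixed $t\ge 0$ the family $\{Z^{(n,\ell)}_t:\ell\ge 1\}$ is tight in $\R_+$; and Aldous's oscillation condition, namely that for every $T>0$,
\[
\lim_{\delta\downarrow 0}\,\sup_{\ell\ge 1}\,\sup_{\tau,\rho}\,\Exp{\big|Z^{(n,\ell)}_{(\tau+\rho)\wedge T}-Z^{(n,\ell)}_{\tau}\big|\wedge 1}=0,
\]
the inner supremum being taken over stopping times $\tau\le T$ (for the natural filtration) and constants $\rho\in[0,\delta]$.

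The structural fact that makes every bound below uniform in $\ell$ is that in \eqref{z,n,m1} the integration domains obey $W_i^{\ell}\subseteq W_i$ and $V_j^{\ell}\subseteq V_j$, so enlarging $\ell$ only shrinks the jump integrals, while the truncated coefficients are dominated by constants depending on the fixed $n$ but not on $\ell$. First I would record these. By continuity, $b(\cdot\wedge n)$ and $\sigma_k(\cdot\wedge n)$ are bounded on $\R_+$ by $\sup_{[0,n]}|b|$ and $\sup_{[0,n]}\sigma_k$. For the $M_i$-terms, the elementary inequality $|a\wedge n|\le n\,|a\wedge 1|$, valid for all $a\in\R$ and $n\ge 1$, together with hypothesis a) gives, for every $x\ge 0$,
\[
\sum_{i\in I}\int_{W_i^{\ell}}\big|g_i(x\wedge n,u_i)\wedge n\big|\,\mu_i(\ud u_i)\le n\sum_{i\in I}\int_{W_i}\big|g_i(x\wedge n,u_i)\wedge 1\big|\,\mu_i(\ud u_i)\le nA_n(1+n).
\]
For the $\widetilde{N}_j$-terms, taking $y=0$ in hypothesis c) and using $h_j(0,v_j)=0$ (so that $l_j(x,0,v_j)=h_j(x,v_j)\wedge n$) gives, for every $x\ge 0$,
\[
\sum_{j\in J}\int_{V_j^{\ell}}\Big(\big|h_j(x\wedge n,v_j)\wedge n\big|\wedge\big(h_j(x\wedge n,v_j)\wedge n\big)^2\Big)\,\nu_j(\ud v_j)\le B_n\,(x\wedge n)\le B_n\,n,
\]
while $\sum_{k\in K}\sigma_k^2(x\wedge n)\le\sum_{k\in K}\sup_{[0,n]}\sigma_k^2$ by continuity.

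With these estimates the compact containment follows by taking expectations in \eqref{z,n,m1}: the continuous and the compensated-jump martingale terms vanish, the drift is bounded, and the expectation of the $M_i$-integral is at most $t\,nA_n(1+n)$ in absolute value, so $\Exp{Z^{(n,\ell)}_t}\le Z_0+C_n t$ with $C_n$ free of $\ell$; Markov's inequality then makes $\{Z^{(n,\ell)}_t:\ell\ge 1\}$ tight at each fixed $t$. For Aldous's condition I would decompose the increment $Z^{(n,\ell)}_{(\tau+\rho)\wedge T}-Z^{(n,\ell)}_{\tau}$ over the time interval of length at most $\delta$ into its drift part, its continuous-martingale part, its $\widetilde{N}_j$-compensated-martingale part, and its $M_i$-jump part. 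The drift and the $M_i$-jump part are $O(\delta)$ in $L^1$ by the boundedness just recorded; the continuous-martingale part is $O(\delta^{1/2})$ in $L^2$ by the $L^2$-isometry and the bound on $\sum_k\sigma_k^2(\cdot\wedge n)$; and the compensated-martingale part is $O(\delta^{1/2})$ after the customary further splitting of its integrand into the part of modulus $\le 1$ (controlled in $L^2$ through the $(\,\cdot\,)^2$-part of the last display) and the part of modulus $>1$ (whose compensator is $O(\delta)$ through the $|\,\cdot\,|$-part of that display). All constants depend only on $n$, $T$ and $Z_0$; letting $\delta\downarrow 0$ gives the displayed limit, and tightness in $D(\R_+,\R_+)$ follows.

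The main difficulty is not conceptual but lies in the bookkeeping: one must keep every constant independent of $\ell$ while moving between the three truncation levels in play, namely the $\wedge n$ appearing in \eqref{z,n,m1}, the $\wedge 1$ in hypothesis a), and the $|\,\cdot\,|\wedge(\,\cdot\,)^2$ in hypothesis c); once the elementary domination inequalities above are set up, the remainder of the argument is exactly that of Lemma 4.3 in \cite{FuLi}.
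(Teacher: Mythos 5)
Your proof is correct and follows essentially the same route as the paper: uniform-in-$\ell$ bounds on the truncated coefficients extracted from hypotheses a)--c), followed by moment estimates on increments at stopping times and Aldous's criterion. The only difference is cosmetic --- the paper works with a single uniform second-moment bound on $\sup_{s\le t}(Z^{(n,\ell)}_s)^2$ via Doob's inequality, whereas you mix $L^1$ and $L^2$ estimates with the modulus-one truncation of the compensated Poisson integral; both are valid and rest on the same bounds.
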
}
\begin{proof}
{Since  $b$ is continuous in $[0,n]$ and from hypothesis (b) and (c), we can show that there exists a constant $\mathcal{K}_n>0$ such that for each $x\leq n$
	\begin{equation}\label{cota k}
	\begin{split}
	b(x)+\underset{k\in K}{\sum} &\sigma_k^2(x)+\underset{i\in I}{\sum}\int_{W_i}|g_i(x,u_i)\wedge n|\mu_i(\ud u_i)\\
	&+\underset{i\in I}{\sum}\int_{W_i}|g_i(x,u_i)\wedge n|^2\mu_i(\ud u_i)+\underset{j\in J}{\sum}\int_{V_j}|h_j(x,v_j)\wedge n|^2\nu_j(\ud v_j)\leq \mathcal{K}_n.
	\end{split}
	\end{equation}}
	Note that if $C_n$ is the maximum of $b$ in $[0,n]$, then $\mathcal{K}_n=C_n+nB_n+(n+1)r_n(n)$. By applying Doob's inequality to the martingale terms in \eqref{z,n,m1}, we have
	\begin{equation*}
	\begin{split}
	\Exp{\underset{s\leq t}{\sup}(Z_{s}^{(n,\ell)})^2}\leq & (2+2m+d+l)^2\left( (Z_0)^2+\Exp{\left(\int_0^tb(Z_s^{(n,\ell)}\wedge n)\ud s\right)^2}\right.\\
	&\hspace{2.8cm}+ \underset{i\in  I}{\sum}\Exp{\left(\int_0^t\int_{W_i} |g(Z_s^{(n,\ell)}\wedge n, u_i)\wedge n|\mu_i(\ud u_i)\right)^2}\\
	& \hspace{-1.5cm}+4\left( \underset{k\in K}{\sum } \int_0^t \sigma_k^2(Z_s^{(n,\ell)}\wedge n)\ud s+\underset{i\in I}{\sum } \int_0^t\int_{W_i} |g_i(Z_s^{(n,\ell)}\wedge n, u_i)\wedge n|^2\mu_i(\ud u_i) \right.\\
	&\hspace{3.8cm}\left.\left.+\underset{j\in J}{\sum } \int_0^t\int_{V_j} |h_j(Z_s^{(n,\ell)}\wedge n, v_j)\wedge n|^2\nu_j(\ud u_j)\right)\right).
	\end{split}
	\end{equation*}
	From inequality \eqref{cota k}, we obtain that the mapping
	$$t\mapsto \sup_{\ell \ge 1} \Exp{\underset{s\leq t}{\sup}\big(Z_{s}^{(n,\ell)}\big)^2}\leq (2+2m+d+l)^2\left( (Z_0)^2+(1+m)\mathcal{K}_n^2t^2+4\mathcal{K}_nt\right),$$
	is  locally bounded. Then for every fixed $t\geq 0$, the sequence $\left\{Z^{(n,\ell)}_t; \ell\ge 1\right\}$ is tight. In a similar way, if $\{\tau_\ell:\ell\geq 1\}$ is a sequence of stopping times bounded above by $T\geq 0$, we have
	$$\Exp{\Big|Z^{(n,\ell)}_{\tau_\ell+t}-Z^{(n,\ell)}_{\tau_\ell}\Big|^2}\leq (2+2m+d+l)^2\left((1+m)\mathcal{K}_n^2t^2+4\mathcal{K}_nt\right).$$
	Consequently, as $t\rightarrow 0$, we have
	$$\sup_{\ell\ge 1}\Exp{\Big|Z^{(n,\ell)}_{\tau_\ell+t}-Z^{(n,\ell)}_{\tau_\ell}\Big|^2}\rightarrow 0.$$
	By Aldous' criterion \cite{al}, for all $n\in \N$, $\left\{(Z^{(n,\ell)}_t: t\geq 0); \ell \ge 1\right\}$ is tight in $D(\R_+,\R_{+})$.	
\end{proof}

For each $n,\ell \in \N$, $x\geq 0$ and $f\in C^2(\R)$ we define
\begin{equation*}\small
\begin{split}\label{generador n}
\mathcal{L}^{(n)}f(x)=&b(x\wedge n)f'(x)+\frac{1}{2}f''(x)\sum_{k\in K}\sigma_k^2(x\wedge n)\\
\small
&+\sum_{i\in I}\int_{W_i} \Delta_{g_i(x\wedge n,u_i)\wedge n}f(x)\mu_i(\ud u_i)+\sum_{j\in J}\int_{V_j} D_{h_j(x\wedge n,v_j)\wedge n}f(x)\nu_j(\ud v_j). 
\end{split}
\end{equation*}
and 
\begin{equation*}\small
\begin{split}\label{generador nm}
\mathcal{L}^{(n,\ell)}f(x)=&f'(x)b(x\wedge n)+\frac{1}{2}f''(x)\sum_{k\in K}\sigma_k^2(x\wedge n)\\
&+\sum_{i\in I}\int_{W_i^\ell}  \Delta_{g_i(x\wedge n,u_i)\wedge n}f(x)\mu_i(\ud u_i)+\sum_{j\in J}\int_{V_j^\ell} D_{h_j(x\wedge n,v_j)\wedge n}f(x)\nu_j(\ud v_j). 
\end{split}
\end{equation*}

Now, we prove the existence of the weak solution of a SDE by considering the corres\-ponding martingale problem.
\begin{lemma}
	\label{mar}
	
	A c\`adl\`ag process $(Z_t^{(n)}: t\geq 0)$ is a weak solution of \eqref{sdenw1} if and only if for every $f\in C^{2}(\R)$,
	\begin{equation}\label{mart n}
	f\Big(Z_t^{(n)}\Big)-f\Big(Z_0^{(n)}\Big)-\int_0^t \mathcal{L}^{(n)}f\Big(Z_s^{(n)}\Big)\ud s
	\end{equation}
	is a locally bounded martingale. Moreover a  c\`adl\`ag process $(Z_t^{(n, \ell)}: t\geq 0)$ is a weak solution of \eqref{z,n,m1} if and only if for every $f\in C^{2}(\R)$,
	\begin{equation}\label{mart n,m}
	f\Big(Z_t^{(n, \ell)}\Big)-f\Big(Z_0^{(n, \ell)}\Big)-\int_0^t \mathcal{L}^{(n, \ell)}f\Big(Z_s^{(n, \ell)}\Big)\ud s
	\end{equation}
	is a locally bounded martingale.
\end{lemma}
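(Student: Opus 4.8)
The plan is to prove the equivalence between being a weak solution of the SDE and solving the associated martingale problem, treating the two cases (with index spaces $W_i,V_j$ and with $W_i^\ell,V_j^\ell$) in exactly the same way, since the only difference is the domain of integration of the jump terms. I will therefore focus on \eqref{sdenw1} and \eqref{mart n}; the statement for \eqref{z,n,m1} and \eqref{mart n,m} follows verbatim by replacing $W_i\to W_i^\ell$, $V_j\to V_j^\ell$ and noting $\mu_i(W_i^\ell)<\infty$, $\nu_j(V_j^\ell)<\infty$.

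First I would prove the ``only if'' direction. Given a weak solution $(Z_t^{(n)},t\ge 0)$ of \eqref{sdenw1}, I apply It\^o's formula for semimartingales with jumps to $f(Z_t^{(n)})$ with $f\in C^2(\R)$. The finite-variation drift part contributes $\int_0^t f'(Z_s^{(n)})b(Z_s^{(n)}\wedge n)\,\ud s$; the continuous martingale part $\sum_{k\in K}\int_0^t f'(Z_s^{(n)})\sigma_k(Z_s^{(n)}\wedge n)\,\ud B_s^{(k)}$ has quadratic variation contributing $\tfrac12\int_0^t f''(Z_s^{(n)})\sum_{k}\sigma_k^2(Z_s^{(n)}\wedge n)\,\ud s$; the $M_i$-integral against the \emph{non-compensated} measures contributes, after writing $f(Z_{s-}^{(n)}+g_i(\cdots)\wedge n)-f(Z_{s-}^{(n)})=\Delta_{g_i(\cdot\wedge n,u_i)\wedge n}f(Z_{s-}^{(n)})$ and separating the compensator, a martingale plus $\sum_{i\in I}\int_0^t\int_{W_i}\Delta_{g_i(Z_s^{(n)}\wedge n,u_i)\wedge n}f(Z_s^{(n)})\,\mu_i(\ud u_i)\,\ud s$; and the $\widetilde N_j$-integral against the \emph{compensated} measures contributes, after Taylor-expanding to first order (so the linear term $f'h_j$ is absorbed, producing $D_{h_j(\cdot\wedge n,v_j)\wedge n}f$), a martingale plus $\sum_{j\in J}\int_0^t\int_{V_j}D_{h_j(Z_s^{(n)}\wedge n,v_j)\wedge n}f(Z_s^{(n)})\,\nu_j(\ud v_j)\,\ud s$. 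Collecting the non-martingale terms gives exactly $\int_0^t\mathcal{L}^{(n)}f(Z_s^{(n)})\,\ud s$, so \eqref{mart n} is a local martingale; local boundedness follows since $x\mapsto x\wedge n$ keeps all coefficients evaluated on $[0,n]$ where $b,\sigma_k$ are bounded (by continuity) and the integrability estimates in conditions a), b), c) — concretely the bound $\mathcal{K}_n$ of \eqref{cota k} in the proof of Lemma \ref{sequence znm} — control the jump integrals.

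For the ``if'' direction I would run the standard martingale-problem-to-SDE argument: testing \eqref{mart n} against the coordinate functions (suitably truncated to stay in $C^2(\R)$, or localised) identifies the semimartingale characteristics of $Z^{(n)}$ — the drift is $b(Z^{(n)}\wedge n)$, the Gaussian part has variance $\sum_k\sigma_k^2(Z^{(n)}\wedge n)$, and, using $f(x)=e^{i\theta x}$ or bump functions supported away from the diagonal, the jump compensator is the pushforward of $\ud s\,\mu_i(\ud u_i)$ under $u_i\mapsto g_i(Z_{s-}^{(n)}\wedge n,u_i)\wedge n$ together with the compensated contribution from $h_j$. One then invokes a representation theorem for semimartingales with prescribed characteristics (e.g.\ Ikeda–Watanabe \cite{IW}, II.7, or El Karoui–M\'el\'eard) to realise, on a possibly enlarged probability space, Brownian motions $(B^{(k)})_{k\in K}$ and Poisson random measures $(M_i)_{i\in I},(N_j)_{j\in J}$ with the stated intensities such that $Z^{(n)}$ satisfies \eqref{sdenw1}; this is what ``weak solution'' requires.

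The main obstacle is the ``if'' direction, specifically the representation step: one must disentangle the single jump compensator of $Z^{(n)}$ into the $m+l$ separate Poisson random measures on the prescribed spaces $U_i$ (resp.\ $W_i$) and $V_j$, rather than just producing \emph{some} jump measure. This is handled as in \cite{FuLi}: because the functions $g_i(\cdot\wedge n,\cdot)\wedge n$ and $h_j(\cdot\wedge n,\cdot)\wedge n$ are fixed Borel maps, one first builds a Poisson random measure $\mathcal{M}$ on $\R_+\times(\prod_i U_i\times\prod_j V_j)$ with intensity $\ud s\,\bigotimes_i\mu_i\otimes\bigotimes_j\nu_j$ driving all jumps simultaneously — splitting into coordinates is then immediate since the measures are product — and one checks that the two truncation levels ($\wedge n$ inside and outside) are consistent with conditions a)–c), which is precisely why those conditions were imposed. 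The only genuinely delicate point, the integrability needed to compensate the $\widetilde N_j$-terms, is exactly condition c) via Lemma \ref{lema3.1lipu}, already available. Since all of this is a routine adaptation of \cite[Section 4]{FuLi}, I would state the lemma's proof at this level of detail and refer to \cite{FuLi} for the representation theorem.
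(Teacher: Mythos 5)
Your proposal is correct and follows essentially the same route as the paper: It\^o's formula for the ``only if'' direction, and for the ``if'' direction an identification of the semimartingale characteristics (drift, Gaussian part, jump compensator) followed by an appeal to the martingale representation theorems of Ikeda--Watanabe, Section II.7, exactly as in Proposition 4.2 of Fu and Li. The paper's own proof is equally terse on the representation step, so no further detail is needed.
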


\begin{proof}
	We will just prove the first statement, the second one is analogous. If $(Z_t: t\geq 0)$ is a solution of \eqref{sdenw1}, by It\^o's formula we can see that \eqref{mart n} is a locally bounded martingale. Conversely, suppose that \eqref{mart n} is is a locally bounded martingale for every $f\in C^{2}(\R)$. By a stopping time argument, we have
	\begin{equation*}\label{mm}
	Z_t=Z_0+\int_0^tb(Z_t\wedge n)\ud s+\underset{i\in I}{\sum}\int_0^t\int_{W_i}(g_i(Z_{s-}\wedge n,u_i)\wedge n)\mu_i(\ud u_i)\ud s+ M_t
	\end{equation*}
	for a square-integrable martingale $(M_t:t\geq 0)$. Let $N(\ud s,\ud z)$ be the optional random measure on $[0,\infty)\times \R$ defined by
	$$N(\ud s,\ud z)=\underset{s>0}{\sum} \Ind{\Delta Z_s\neq 0}\delta_{(s,\Delta Z_s)}(\ud s,\ud z),$$
	with $\Delta Z_s=Z_s-Z_{s-}$. Denote by 
	$\tilde{N}$ the compensated measure.
	Then
	\begin{equation}\label{repmar}
	Z_t=Z_0+\int_0^tb(Z_t\wedge n)\ud s+\underset{i\in I}{\sum}\int_0^t\int_{W_i}(g_i(Z_{s-}\wedge n,u_i)\wedge n)\mu_i(\ud u_i)\ud s+ M_t^c+M^d_t
	\end{equation}
	where $(M_t^c:t\geq 0)$ is a continuous martingale and 
	$$M_t^d=\int_0^t\int_{\R}z\tilde{N}(\ud s,\ud z)$$
	 is a purely discontinuous martingale. As in the proof of Proposition 4.2 in \cite{FuLi}, 
	we obtain \eqref{sdenw1} on an extension of the probability space by applying Martingale Representation Theorems to \eqref{repmar}. (See \cite[Section II.7]{IW})
\end{proof}	

By Lemma \ref{sequence znm}, $\left\{(Z^{(n,\ell)}_t: t\geq 0); \ell \ge 1\right\}$ is tight in the Skorokhod space $D(\R_+,\R_{+})$. Then, there exists a subsequence $\left\{(Z^{(n,\ell_k)}_t: t\geq 0); k \ge 1\right\}$  that converges to some process $(Z^{(n)}_t: t\geq 0)$ in the Skorokhod sense. By the Skorokhod Representation Theorem, we may assume those processes are defined in the same probability space and  $\left\{(Z^{(n,\ell_k)}_t: t\geq 0); k \ge 1\right\}$ converges to $(Z^{(n)}_t: t\geq 0)$  almost surely in $D(\R_+,\R_+)$.  Let $\mathcal{D}(Z^{(n)})=\left\{t>0: \Prob{Z^{(n)}_{t-}=Z_t^{(n)}}=1\right\}$. Then, $[0,\infty)\setminus \mathcal{D}(Z^{(n)})$ is at most countable and $\lim_{k\rightarrow\infty}Z_t^{(n,\ell_k)}=Z_t^{(n)}$ almost surely for every $t\in \mathcal{D}(Z^{n})$ (see \cite[p. 118,131]{EKmarkov}). 

\begin{lemma} \label{weak limit}
The process $(Z^{(n)}_t: t\geq 0)$ is 
is a weak solution of \eqref{sdenw1}.
\end{lemma}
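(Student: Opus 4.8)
The plan is to identify the limit through the martingale problem of Lemma~\ref{mar}. Each $(Z^{(n,\ell_k)}_t:t\ge0)$ is a weak solution of \eqref{z,n,m1}, so by Lemma~\ref{mar}, for every $f\in C^2(\R)$ the process
\[
f(Z^{(n,\ell_k)}_t)-f(Z^{(n,\ell_k)}_0)-\int_0^t\mathcal{L}^{(n,\ell_k)}f(Z^{(n,\ell_k)}_s)\ud s
\]
is a locally bounded martingale. I would first reduce to $f\in C^2(\R)$ with bounded first and second derivatives (the general case following by the same stopping-time argument used in the proof of Lemma~\ref{mar}). For such $f$, the truncation $\cdot\wedge n$ together with the a priori bound \eqref{cota k} shows, using $|\Delta_af(x)|\le\|f'\|_\infty|a|$ and $|D_af(x)|\le\tfrac12\|f''\|_\infty a^2$ in the jump parts, that $\mathcal{L}^{(n)}f$ and all the $\mathcal{L}^{(n,\ell)}f$ are uniformly bounded functions of $x$. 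The target is to show that $Z^{(n)}$ solves the martingale problem for $\mathcal{L}^{(n)}$, i.e.\ that \eqref{mart n} is a martingale; the implication of Lemma~\ref{mar} from the martingale problem to a weak solution then finishes the proof.

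The analytic heart is the convergence $\mathcal{L}^{(n,\ell_k)}f(Z^{(n,\ell_k)}_s)\to\mathcal{L}^{(n)}f(Z^{(n)}_s)$. First, $x\mapsto\mathcal{L}^{(n)}f(x)$ is continuous: the drift and diffusion parts are continuous on $[0,n]$, and in the jump integrals dominated convergence applies with the integrable majorants supplied by \eqref{cota k}. Second, since $W_i^\ell\uparrow W_i$ and $V_j^\ell\uparrow V_j$, the same majorants give $\mathcal{L}^{(n,\ell)}f(x)\to\mathcal{L}^{(n)}f(x)$ as $\ell\to\infty$ and make the family $\{\mathcal{L}^{(n,\ell)}f\}_{\ell\ge1}$ equicontinuous on compact sets. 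Recalling that $Z^{(n,\ell_k)}_s\to Z^{(n)}_s$ a.s.\ for every $s\in\mathcal{D}(Z^{(n)})$, and that $\R_+\setminus\mathcal{D}(Z^{(n)})$ is at most countable (hence Lebesgue-null), one obtains $\mathcal{L}^{(n,\ell_k)}f(Z^{(n,\ell_k)}_s)\to\mathcal{L}^{(n)}f(Z^{(n)}_s)$ a.s.\ for a.e.\ $s$; since these integrands are bounded uniformly in $k$ and $s\le t$, Fubini's theorem and dominated convergence yield
\[
\int_0^t\mathcal{L}^{(n,\ell_k)}f(Z^{(n,\ell_k)}_s)\ud s\longrightarrow\int_0^t\mathcal{L}^{(n)}f(Z^{(n)}_s)\ud s\qquad\text{a.s.\ as }k\to\infty.
\]

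It then remains to pass the martingale property to the limit. Fix $0\le s_1<\cdots<s_p\le s<t$, all lying in $\mathcal{D}(Z^{(n)})$, and $G\in C_b(\R^p)$. For each $k$,
\[
\e\!\left[\Big(f(Z^{(n,\ell_k)}_t)-f(Z^{(n,\ell_k)}_s)-\int_s^t\mathcal{L}^{(n,\ell_k)}f(Z^{(n,\ell_k)}_r)\ud r\Big)G\big(Z^{(n,\ell_k)}_{s_1},\dots,Z^{(n,\ell_k)}_{s_p}\big)\right]=0.
\]
Letting $k\to\infty$, the a.s.\ convergences above together with the boundedness of $f$, $G$ and of the generator integrands allow dominated convergence, giving the same identity with $Z^{(n)}$ and $\mathcal{L}^{(n)}$ in place of the $\ell_k$-objects. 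Since $\R_+\setminus\mathcal{D}(Z^{(n)})$ is at most countable and $Z^{(n)}$ is c\`adl\`ag, a routine approximation extends this to all $0\le s<t$ and to all bounded continuous functionals of the path up to time $s$, so \eqref{mart n} is a locally bounded martingale. Lemma~\ref{mar} then yields that $Z^{(n)}$ is a weak solution of \eqref{sdenw1}.

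The step I expect to be the main obstacle is the uniform-in-$k$ control of the jump integrals entering $\mathcal{L}^{(n,\ell_k)}f(Z^{(n,\ell_k)}_s)-\mathcal{L}^{(n)}f(Z^{(n)}_s)$: one must check that the tails
\[
\int_{W_i\setminus W_i^{\ell}}\big|g_i(y\wedge n,u_i)\wedge n\big|\,\mu_i(\ud u_i)\qquad\text{and}\qquad\int_{V_j\setminus V_j^{\ell}}\big|h_j(y\wedge n,v_j)\wedge n\big|^2\,\nu_j(\ud v_j)
\]
tend to $0$ as $\ell\to\infty$ uniformly for $y$ in a compact set, and that the resulting integrands are dominated uniformly in $k$ along the trajectories; this is where conditions (a)--(c), the bound \eqref{cota k} and the second-moment estimate of Lemma~\ref{sequence znm} are invoked. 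This part follows, step by step, the proofs of Proposition~4.2 and Lemma~4.3 in \cite{FuLi}.
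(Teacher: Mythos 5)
Your proposal is correct and follows essentially the same route as the paper: both pass to the martingale problem of Lemma~\ref{mar}, establish the convergence $\mathcal{L}^{(n,\ell_k)}f(Z^{(n,\ell_k)}_s)\to\mathcal{L}^{(n)}f(Z^{(n)}_s)$ at times in $\mathcal{D}(Z^{(n)})$ by controlling the tail integrals over $W_i\setminus W_i^{\ell}$ and $V_j\setminus V_j^{\ell}$ uniformly on compacts (Dini's theorem together with conditions (b) and (c)), and then conclude by dominated convergence that \eqref{mart n} is a locally bounded martingale. The step you flag as the main obstacle is precisely the one the paper treats in detail, and it is handled there in the same way you propose.
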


\begin{proof}
{We first prove that if  $(Y_\ell,\ell\in \N)$ is a a.s. convergent sequence of random variables and $Y$ its is limit, then $ \mathcal{L}^{(n,\ell)}f(Y_\ell)\rightarrow \mathcal{L}^{(n)}f(Y)$ as $\ell\rightarrow \infty$, for every $f\in C^{2}(\R_+)$.}
	Let $M>0$ be a constant such that $|Y|,|Y_\ell|\leq M$, for all $m\in\N$. By conditions (b) and (c), we have that for each $k$, the mapping
	$$x\mapsto \sum_{i\in i}\int_{W_i\setminus W_i^k}(g_i(x\wedge n,u_i)\wedge n)\mu_i(\ud u_i)+
	\sum_{j\in J}\int_{V_j\setminus V_j^k}(h_j(x\wedge n,v_j)\wedge n)^2\nu_j(\ud v_j)$$
	is continuous. By Dini's Theorem, we deduce, as $k\rightarrow\infty$, that
	$$\epsilon_k:=\sup_{|x|\leq M}\int_{W_i\setminus W_i^k}(g_i(x\wedge n,u_i)\wedge n)^2\mu_i(\ud u_i)+
	\sum_{j\in J}\int_{V_j\setminus V_j^k}(h_j(x\wedge n,v_j)\wedge n)\nu_j(\ud v_j)\rightarrow 0.$$
	For $\ell\geq k$ and $j\in J$, by applying the Mean Value Theorem several times we have\footnote{In order to simplify the notation, for a $f\in C(\R)$, we will denote 
		$\|f\|:=\max\{|f(x)|:|x|\leq n+M\}.$
		By continuity, $\|f\|<\infty$. }
		\[
	\begin{split}
	\bigg|\int_{V_j}D_{h(Y\wedge n,v_j)\wedge n}&f(Y)\nu_j(\ud v_j)-\int_{V_j^\ell}D_{h(Y_\ell\wedge n,v_j)\wedge n}f(Y_\ell)\nu_j(\ud v_j)\bigg|\\
	\leq & \|f''\|\epsilon_k+\int_{V_j^k} \Big|f(Y)-f(Y_\ell)\Big|\nu_j(\ud v_j)\\
	& +\|f'\|\int_{V_j^k} \Big|Y+h(Y\wedge n,v_j)\wedge n-Y_\ell-h(Y_\ell\wedge n,v_j)\wedge n\Big|\nu_j(\ud v_j)\\
	& +\|f'\|\int_{V_j^k}\Big |h(Y\wedge n,v_j)\wedge n)-h(Y_\ell\wedge n,v_j)\wedge n\Big|\nu_j(\ud v_j)\\
	&+ \int_{V_j^k}\Big|f'(Y)-f'(Y_\ell)\Big| \Big|h(Y\wedge n,v_j)\wedge n\Big|\nu_j(\ud v_j).
	\end{split}
	\]
	Then, by H\"older inequality we have
	\[
	\begin{split}
	\bigg|\int_{V_j}D_{h(Y\wedge n,v_j)\wedge n}&f(Y)\nu_j(\ud v_j)-\int_{V_j^\ell}D_{h(Y_\ell\wedge n,v_j)\wedge n}f(Y_\ell)\nu_j(\ud v_j)\bigg|\\
	\leq& \|f''\|\epsilon_k+ |f(Y)-f(Y_\ell)|\nu_j(V_j^k) +\|f'\||Y-Y_\ell|\nu_j(V_j^k)\\
	&+2\|f'\|\int_{V_j^k} \Big|h(Y\wedge n,v_j)\wedge n-h(Y_\ell\wedge n,v_j)\wedge n\Big|\nu_j(\ud v_j)\\
	&+ |f'(Y)-f'(Y_\ell)|\left(\nu_j(V_j^k)\int_{V_j}|h(Y\wedge n,v_j)\wedge n|^2\nu_j(\ud v_j) \right)^{1/2}.
	\end{split}
	\]
	By letting $\ell$ and $k$ go to $\infty$, and using hypothesis (c), we have that for all $j\in J$,
	\begin{equation}\label{limiteV_j}
	\underset{\ell\rightarrow \infty}{\lim} \int_{V_j^\ell} D_{h_j(Y_\ell\wedge n,v_j)\wedge n}f(Y_\ell)\nu_j(\ud v_j)=\int_{V_j} D_{h_j(Y\wedge n,v_j)\wedge n}f(Y)\nu_j(\ud v_j).
	\end{equation}
	In a similar way,  by hypothesis (b), we have for  all $i\in I$
	\begin{equation}
	\label{limiteW_j}
	\underset{\ell\rightarrow \infty} {\lim}\int_{W_i^\ell}\Delta_{g_i(Y_\ell\wedge n,u_i)\wedge n} f(Y_\ell)\mu_i(\ud u_i)=\int_{W_i}\Delta_{g_i(Y\wedge n,u_i)\wedge n} f(Y)\mu_i(\ud u_i).  \end{equation}
	Therefore, by \eqref{limiteV_j} and \eqref{limiteW_j} it follows that $ \mathcal{L}^{(n,\ell)}f(Y_\ell)\rightarrow \mathcal{L}^{(n)}f(Y)$ as $\ell\rightarrow\infty$.
	
	\noindent Finally, by the previous step, for all $t\in \mathcal{D}(Z^{(n)})$, we have that
	$ \mathcal{L}^{(n,\ell)}f(Z_t^{(n,\ell_k)})\rightarrow \mathcal{L}^{(n)}f(Z_t)$ as $\ell\rightarrow\infty$.
	{Recall that for all $k\in \N$, $(Z_t^{(n,\ell_k)}: t\geq 0)$ is a weak solution of \eqref{z,n,m1} and  from Lemma \ref{mar}, for each $f\in C^{2}(\R_+)$
	$$	f\Big(Z_t^{(n, \ell_k)}\Big)-f\Big(Z_0^{(n, \ell_k)}\Big)-\int_0^t \mathcal{L}^{(n, \ell_k)}f\Big(Z_s^{(n, \ell_k)}\Big)\ud s$$
	 is a locally bounded martingale. Then, the Dominated Convergence Theorem implies that \eqref{mart n} is a locally bounded martingale.  And, from Lemma \ref{mar} we obtain that $(Z^{(n)}_t:t\geq 0)$ is a weak solution of \eqref{sdenw1}.}
\end{proof}

The following result shows the  a.s. uniqueness of (\ref{bpbackward}) and it is needed for the proof of Proposition 1. 
\begin{lemma}\label{existencev}
Suppose that $\int_{[1,\infty)}x\mu(\ud x)<\infty$ and let $K=(K_t, t\ge 0)$ be a L\'evy process. Then for every $\lambda \ge 0$, $v_t:s\in[0,t]\mapsto v_t(s, \lambda, K)$  is the a.s. unique solution of the backward differential equation,
 \begin{align}\label{bpbackward1}
 \frac{\partial}{\partial s}v_t(s,\lambda, K)=e^{K_s}\psi_0(v_t(s,\lambda,K)e^{-\delta_s}),\qquad v_t(t,\lambda, K)=\lambda,
 \end{align}
 where
 $$\psi_0(\theta)=\psi(\theta)-\theta\psi'(0)=\gamma^2
\theta^2+\int_{(0,\infty)}\big(e^{-\theta x}-1+\theta x\big)\mu(\ud x), \qquad \theta\ge 0.$$
\end{lemma}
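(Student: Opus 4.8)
The plan is to read \eqref{bpbackward1} as a Carath\'eodory ODE in the time variable $s\in[0,t]$ and to reduce it to a globally Lipschitz problem. I would fix $t>0$, $\lambda\ge 0$ and work on the full-measure event on which $s\mapsto K_s$ is c\`adl\`ag, hence bounded on $[0,t]$; write $\overline K:=\sup_{s\le t}|K_s|<\infty$. The observation that drives everything is an a priori bound: since $\psi_0\ge 0$ with $\psi_0(0)=0$, any non-negative solution of \eqref{bpbackward1} is non-decreasing in $s$ on $[0,t]$, so $0\le v_t(s,\lambda,K)\le\lambda$ there and the argument $v_t(s,\lambda,K)e^{-K_s}$ stays in the compact interval $[0,\lambda e^{\overline K}]$. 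On such an interval $\psi_0$ is Lipschitz: using $(\mathbf{H})$ one checks that $\psi_0'(\theta)=2\gamma^2\theta+\int_{(0,\infty)}x(1-e^{-\theta x})\mu(\ud x)$ is finite and continuous in $\theta$ (the integrand is $O(\theta x^2)$ near the origin and dominated by $x$ at infinity, with $\int_{[1,\infty)}x\mu(\ud x)<\infty$), so that $\psi_0\in C^1([0,\infty))$.

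Next I would make this rigorous. Extend $\psi_0$ to a function $\widehat\psi_0:\R\to\R$ that agrees with $\psi_0$ on $[0,\lambda e^{\overline K}]$, vanishes on $(-\infty,0]$ (consistent with $\psi_0(0)=\psi_0'(0)=0$) and is affine on $[\lambda e^{\overline K},\infty)$, so that $\widehat\psi_0$ is globally Lipschitz. Then $G(s,v):=e^{K_s}\widehat\psi_0(ve^{-K_s})$ is bounded on $[0,t]\times\R$, c\`adl\`ag in $s$ for each $v$, and Lipschitz in $v$ uniformly in $s$ with constant $e^{2\overline K}\mathrm{Lip}(\widehat\psi_0)$. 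A contraction-mapping argument for the operator $v\mapsto\bigl(s\mapsto\lambda-\int_s^tG(r,v(r))\,\ud r\bigr)$ on $C([0,t])$ with a suitably weighted sup-norm then yields a unique absolutely continuous $v_t(\cdot,\lambda,K)$ solving the integral form of \eqref{bpbackward1}. Non-negativity follows from a comparison argument: $G\ge 0$ and $G(r,0)=0$, so the solution with terminal value $0$ is identically $0$ and lies below the solution with terminal value $\lambda$. Monotonicity then gives $v_t(s,\lambda,K)\le\lambda$, hence the argument of $\widehat\psi_0$ never leaves $[0,\lambda e^{\overline K}]$ and $\widehat\psi_0$ may be replaced by $\psi_0$ along the solution. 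Since $v_t(\cdot,\lambda,K)$ is continuous and $K$ is c\`adl\`ag, $s\mapsto e^{K_s}\psi_0(v_t(s,\lambda,K)e^{-K_s})$ is c\`adl\`ag, so $v_t(\cdot,\lambda,K)$ is differentiable off the at most countable set of jump times of $K$ and satisfies \eqref{bpbackward1} there; a.s. uniqueness on $[0,t]$ is the uniqueness of the fixed point, equivalently Gronwall's inequality applied to the difference of two solutions.

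As an alternative route, matching Bansaye et al. \cite{Bapa}, I would realize this solution concretely through the L\'evy--It\^o decomposition. For $\epsilon>0$ let $K^{(\epsilon)}$ denote $K$ with the jumps of size smaller than $\epsilon$ removed, so $K^{(\epsilon)}$ has finitely many jumps on $[0,t]$ and is continuous between them. On each inter-jump subinterval the equation driven by $K^{(\epsilon)}$ has a continuous-in-time, locally-Lipschitz-in-space right-hand side, so the classical Picard--Lindel\"of theorem gives a unique solution there; concatenating across the jump times --- at which the solution stays continuous, the equation involving $K$ only pointwise --- produces a unique $v_t(\cdot,\lambda,K^{(\epsilon)})$. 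Since $K^{(\epsilon)}\to K$ uniformly on $[0,t]$ a.s. as $\epsilon\downarrow 0$, the uniform Lipschitz bounds above together with Gronwall's inequality show that $\{v_t(\cdot,\lambda,K^{(\epsilon)})\}_{\epsilon>0}$ is Cauchy in $C([0,t])$, and its limit solves \eqref{bpbackward1} driven by $K$; uniqueness is again Gronwall.

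I expect the main obstacle to be exactly the clash between the only locally Lipschitz branching mechanism $\psi_0$ and the unbounded factor $e^{-K_s}$ inside its argument, so that no off-the-shelf global existence/uniqueness theorem applies verbatim. The remedy is the a priori bound $v_t(s,\lambda,K)\le\lambda$, obtained from the monotonicity forced by $\psi_0\ge 0$, together with the pathwise boundedness $\sup_{s\le t}|K_s|<\infty$ coming from the c\`adl\`ag paths: these confine the dynamics to a compact set on which $\psi_0$ is Lipschitz, after which standard Carath\'eodory theory (or the jump-removal approximation) closes the argument. The time-discontinuity of $s\mapsto K_s$ is a milder technical point, absorbed either into the Carath\'eodory framework or into the approximation by finite-activity L\'evy processes.
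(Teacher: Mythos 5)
Your proposal is correct, and your ``alternative route'' is essentially the paper's proof, while your main route is a genuinely different and somewhat cleaner argument. The paper works through the L\'evy--It\^o decomposition: it writes $K=X^{(1)}+X^{(2)}+X^{(3)}$, replaces the compensated small-jump part $X^{(3)}$ by $X^{(3,\epsilon_n)}$ along a deterministic subsequence so that $K^{(n)}\to K$ uniformly on $[0,t]$ a.s., solves the equation driven by each piecewise-continuous $K^{(n)}$ by classical Cauchy--Lipschitz, and then shows via the a priori bound $v^n\le\lambda$, the uniform bound $S$ on the exponentials, the local Lipschitz constant $C=\psi_0'(\lambda S)$ and Gronwall that $(v^n)$ is uniformly Cauchy and its limit solves \eqref{bpbackward1}; uniqueness is again Gronwall. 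Your primary argument bypasses the probabilistic approximation altogether: the same a priori confinement ($\psi_0\ge 0$ forces $v$ non-decreasing, hence $v\le\lambda$, and c\`adl\`ag paths give $\sup_{s\le t}|K_s|<\infty$) lets you truncate $\psi_0$ to a globally Lipschitz $\widehat\psi_0$ and run a contraction/Carath\'eodory argument directly on the càdlàg driver, with the comparison step guaranteeing the solution never leaves the region where $\widehat\psi_0=\psi_0$. What your route buys is economy --- no subsequence extraction, no stability estimate for a family of approximating ODEs --- at the price of invoking the Carath\'eodory framework for measurable time-dependence rather than only the textbook Picard--Lindel\"of theorem; the paper's route keeps every ODE classical but pays with the limiting argument. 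Both proofs rest on the identical key observations (monotonicity from $\psi_0\ge0$, pathwise boundedness of $K$, convexity of $\psi_0$ giving the Lipschitz constant, Gronwall for uniqueness), so either is a complete proof of the lemma. One small caution on your alternative route: ``removing the jumps of size smaller than $\epsilon$'' must be understood as the paper does it, i.e.\ keeping the compensating drift $-t\int_{B_\epsilon}x\,\pi(\ud x)$, since otherwise the truncated processes need not converge when $K$ has paths of infinite variation, and even then the uniform a.s.\ convergence only holds along a subsequence.
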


 \begin{proof} Our proof  will use a convergence argument for L\'evy processes.  Let $K$ be a L\'evy process with characteristic $(\alpha,\sigma,\pi)$ where $\alpha\in \mathbb{R}$ is the drift term, $\sigma\ge 0$ is the Gaussian part and $\pi$ is the so-called L\'evy measure satisfying $ \int_{\mathbb{R}\setminus \{0\}} (1\land z^2)\pi(\ud z)<\infty.$ 
 From the L\'evy-It\^o decomposition (see for instance \cite{Kyp}), the process $K$ can be decomposed as the sum of three independent processes: $X^{(1)}$ a Brownian motion with drift, $X^{(2)}$ a compound Poisson process and $X^{(3)}$  a square-integrable martingale with an a.s. countable number of jumps on each finite time interval with magnitude less than 1. Let  $B_{\epsilon}=(-1,-\epsilon)\cup (-\epsilon,1)$ and $M$ be  a Poison random measure with characteristic  measure $\ud t\pi(\ud x)$. Observe that the process
$$X_t^{(3,\epsilon)}=\int_{[0,t]}\int_{B_{\epsilon}}xM(\ud s,\ud x)-t\int_{B_{\epsilon}}x\pi(\ud x), \qquad t\geq 0$$
is a martingale. According to Theorem 2.10 in \cite{Kyp}, for a fixed $t\geq 0$, there exists a deterministic subsequence $(\epsilon_n)_{ n\in \N}$ such that $(X_s^{3,\epsilon_n}, 0\le s\leq t)$ converges uniformly to $(X_s^3, 0\le s\le t)$ with probability one.
We now define $$K^{(n)}_s=X_s^{(1)}+X_s^{(2)}+X_s^{(3,\epsilon_n)}, \qquad s\leq t.$$

In the sequel, we  work on the space $\widetilde\Omega$ such that $K^{(n)}$ converges uniformly to $K$ on $[0,t].$  Note that $\psi_0$ is locally Lipschitz and $K^{(n)}$ is a piecewise continuous function with a finite number of discontinuities. Hence, from the Cauchy-Lipschitz Theorem, we can define a unique solution $v^n_t(\cdot,\lambda, K^{(n)})$ of the backward differential equation:
\begin{align*}
 \frac{\partial}{\partial s}v_t^n(s,\lambda, K^{(n)})=e^{K^{(n)}_s}\psi_0(v_t^n(s,\lambda, K^{(n)})e^{-K^{(n)}_s}),\qquad v_t^n(t,\lambda, K^{(n)})=\lambda.
 \end{align*}
 In order to prove our result, we  show that the sequence $(v^n(s):=v^n_t(s,\lambda, K^{(n)}),\ s\leq t)_{n\in\N}$ converges to a unique solution of (\ref{bpbackward1}) on $\widetilde{\Omega}$.  With this purpose in mind, we define
 \begin{align}\label{defs}
 S=\underset{s\in[0,t],\ n\in\N}{\sup}\left\{e^{K^{(n)}_s},e^{-K^{(n)}_s},e^{K_s},e^{-K_s}\right\},
 \end{align}
 which turns out to be finite from the uniform convergence of $K^{(n)}$ to $K$. Since $\psi_0\geq 0$, we  have that 
 $v^n$ is  increasing  and 
 $v^n(s)\leq \lambda $ for $s\leq t$ and $n\in \N$.
 On the other hand,  since $\psi_0$ is a convex and increasing, we deduce that  for any $0\leq \zeta\leq \eta\leq \lambda S$, the following inequality holds
 \begin{align}\label{cotapsi}
 0\leq\frac{\psi_0(\eta)-\psi_0(\zeta)}{\eta-\zeta}\leq \psi_0'(\eta)\leq \psi_0'(\lambda S)=:C.
 \end{align}
 For simplicity, we denote for all $v\ge 0$, 
 $$\psi^n(s,v)=e^{K^{(n)}_s}\psi_0(ve^{-K^{(n)}_s}) \quad \mbox{and} \quad \psi^{\infty}(s,v)=e^{K_s}\psi_0(ve^{-K_s}).$$
 We then observe that for any $0\leq s\leq t$ and $n,m\in\N$,
 \begin{align*}
 |v^n(s)-v^m(s)|
\leq&\int_s^t(R^n(u)+R^m(u))\ud u+\int_s^t|\psi^{\infty}(u,v^n(u))-\psi^{\infty}(u,v^m(u))|\ud u,
\end{align*}
where for any $u\in[0,t]$,
\begin{align*}
R^n(u):=&|\psi^{n}(u,v^n(u))-\psi^{\infty}(u,v^n(u))|\\
\leq& e^{K^{(n)}_u}|\psi_0(v^n(u)e^{-K^{(n)}_u}) -\psi_0(v^n(u)e^{-K_u})|+\psi_0(v^n(u)e^{-K_u})|e^{K^{(n)}_u}-e^{K_u}|.
\end{align*}
Next, using (\ref{defs}) and (\ref{cotapsi}), we deduce
\begin{align*}
R^n(u)&\leq SC\lambda |e^{-K^{(n)}_u}-e^{-K_u}|+\psi_0(S\lambda)|e^{K^{(n)}_u}-e^{K_u}|\\
&\leq \left(SC\lambda+S\psi_0(S\lambda)\right)\underset{u\in[0,t]}{\sup}\left\{|e^{K^{(n)}_u}-e^{K_u}|,|e^{-K^{(n)}_u}-e^{-K_u}|\right\}=:s_n.
\end{align*}
From similar arguments, we obtain
$$|\psi^{\infty}(u,v^n(u))-\psi^{\infty}(u,v^m(u))|\leq C|v^n(u)-v^m(u)|.$$
Therefore,
$$|v^n(s)-v^m(s)|\leq R_{n,m}(s)+C\int_s^t |v^n(u)-v^m(u)|\ud u,$$
where 
$$R_{n,m}(s)=\int_s^t(R^n(u)+R^m(u))\ud u.$$
Gronwall's lemma yields that for all $0\leq s\leq t,$
$$|v^n(s)-v^m(s)|\leq R_{n,m}(s)+C\int_s^t R_{n,m}(u)e^{C(u-s)}\ud u.$$
Now, recalling that $R^n(u)\leq s_n$ and $R_{n,m}(u)\leq(s_n+s_m)t$, we get that for every $N\in\N,$
$$\underset{n,m\geq N, s\in[0,t]}{\sup}|v^n(s)-v^m(s)|\leq te^t\underset{n,m\geq N}{\sup} (s_n+s_m).$$
Moreover since  $s_n\rightarrow0$, we deduce that  $(v^n(s),s\leq t)_{n\in\N}$ is a Cauchy sequence under the uniform norm on $\widetilde\Omega$. In other words, for any $\omega\in\widetilde{\Omega}$ there exists a continuous function $v^*$ on $[0,t]$ such that $v^n\rightarrow v^*$ as $n$ goes to $\infty$. 
We define the function  $v:\Omega\times[0,t]\rightarrow[0,\infty]$ as follows
\[ 
v(s)=
\left\{ \begin{array}{ll}
v^*(s) & \mbox{if } \omega\in\widetilde{\Omega}, \\
0 &\mbox{elsewhere.} 
\end{array}\right.
\] 
Let $s\in[0,t]$ and $n\in\N$, then 
\begin{align*}
\left|v(s)-\int_s^t\psi^{\infty}(s,v(s))\ud s-\lambda\right|\leq &|v(s)-v^n(s)|+\int_s^t|\psi^n(s,v(s))-\psi^n(s,v^n(s))|\ud s\\
&\hspace{3cm}+\int_s^t|\psi^{\infty}(s,v(s))-\psi^n(s,v(s))|\ud s\\
\leq& (1+Ct)\underset{s\in[0,t]}{\sup}\left\{|v(s)-v^n(s)|\right\}+ts_n.
\end{align*}
By letting $n\rightarrow \infty$, we obtain our claim. The uniqueness of the solution of (\ref{bpbackward1}) follows from Gronwall's lemma. 
  \end{proof}
  
\noindent \textbf{Acknowledgements}\\

\noindent Both authors  acknowledge support from  the Royal Society and SP also acknowledge  support from  CONACyT-MEXICO Grant 351643.

\end{document}